\newtheorem{thm}{Theorem}
\newtheorem*{thma}{Theorem A}
\newtheorem*{thma*}{Theorem A*}
\newtheorem{lem}{Lemma}
\newtheorem{dfn}{Definition}
\newtheorem{cor}{Corollary}
\begin{document}
\title{A Solvable Version of the Baer--Suzuki Theorem}

\author{Simon Guest}
\address{Department of Mathematics\\
         University of Southern California\\
         Los Angeles, CA 90089--2532}
\email{sguest@usc.edu}
\begin{abstract}
Suppose that $G$ is a finite group and $x \in G$ has prime order
$p \ge 5$. Then  $x$ is contained in the solvable radical of $G$,
$O_{\infty}(G)$, if (and only if) $\langle x,x^g \rangle$ is
solvable for all $g \in G$. If $G$ is an almost simple group and $x \in
G$ has prime order $p \ge 5$ then this implies that there exists $g \in G$ such that $\langle x,x^g \rangle$ is not
solvable. In fact, this is also true when $p=3$ with very few
exceptions, which are described explicitly.
\end{abstract}
\maketitle
\section{Introduction}
The Baer--Suzuki theorem provides a useful characterization of the Fitting subgroup of a finite (or linear) group. It can be stated as follows:
\begin{thm}{\rm(Baer--Suzuki)}
Let $G$ be a finite (or linear) group. Suppose that for some $x
\in G$, $\langle x, x^g \rangle$ is nilpotent for all $g \in G$.
Then $\langle x^G \rangle$ is nilpotent. That is, $x$ is contained
in the Fitting subgroup of G.
\end{thm}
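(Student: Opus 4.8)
The plan is to follow the classical proof of the Baer--Suzuki theorem, whose core is a reduction to the case that $x$ has prime-power order. Write $x = x_{q_1}\cdots x_{q_t}$ for the commuting primary components of $x$; each $x_{q_i}$ is a power of $x$, so $x_{q_i}\in\langle x\rangle$ and $x_{q_i}^{g}\in\langle x^g\rangle$, whence $\langle x_{q_i},x_{q_i}^{g}\rangle\le\langle x,x^g\rangle$. Since a nilpotent group generated by two $q$-elements is itself a $q$-group, the hypothesis descends to each $x_{q_i}$: $\langle x_{q_i},x_{q_i}^{g}\rangle$ is a $q_i$-group for all $g$. Granting the prime-power case, each $x_{q_i}\in O_{q_i}(G)\le F(G)$, so $x\in F(G)$ and $\langle x^G\rangle\le F(G)$ is nilpotent. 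It therefore suffices to show: if $x$ is a $q$-element and $\langle x,x^g\rangle$ is a $q$-group for all $g$, then $\langle x^G\rangle$ is a $q$-group, equivalently $x\in O_q(G)$. (For a linear, not necessarily finite, $G$ one first passes to an appropriate finite quotient.)

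Now induct on $|G|$, taking a $q$-element $x$ satisfying the hypothesis with $N:=\langle x^G\rangle$ not a $q$-group. If $N\ne G$, the inductive hypothesis applied to $x$ inside $N$ gives $x\in O_q(N)$; but $O_q(N)$ is characteristic in $N\trianglelefteq G$, hence normal in $G$, so $N\le O_q(G)$, a contradiction. Thus $G=\langle x^G\rangle$. Next comes the coprime lemma: put $D=O_{q'}(G)$; if some $d\in D$ had $[x,d]\ne 1$, then $x^d=x[x,d]$ with $[x,d]\in D$, so $\langle x,x^d\rangle\le\langle x\rangle D$ would contain the nontrivial element $[x,d]$ in its normal Hall $q'$-subgroup $D\cap\langle x,x^d\rangle$, contradicting that $\langle x,x^d\rangle$ is a $q$-group. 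Hence $[x,D]=1$, and since $G$ is generated by conjugates of $x$ this forces $D\le Z(G)$. Passing to $G/D$ and applying induction, then splitting off the central $q'$-part by Schur--Zassenhaus, yields $x\in O_q(G)$ unless $D=1$; so $O_{q'}(G)=1$. Similarly, passing to $G/O_q(G)$ gives $x\in O_q(G)$ unless $O_q(G)=1$; so $O_q(G)=1$ as well. Therefore $F(G)=1$, so $F^*(G)=E(G)$ is a direct product $S_1\times\cdots\times S_k$ of nonabelian simple groups with $C_G(E(G))=1$, and $G\le\operatorname{Aut}(E(G))$ is generated by the single class $x^G$.

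The main obstacle is this last, essentially almost simple, configuration: one must rule out a nontrivial $q$-element $x$ with $\langle x,x^g\rangle$ a $q$-group for all $g$. The classical route (Baer, Suzuki) is a delicate counting argument: among all subgroups of $G$ generated by a set of $G$-conjugates of $x$ and failing to be $q$-groups, choose one, $H$, of minimal order; by minimality every proper subgroup of $H$ generated by such conjugates is a $q$-group, and the crux is to show that $H$ is already generated by two conjugates of $x$, which contradicts the hypothesis outright. Controlling how the $q$-subgroups $\langle x^{g_j}:j\ne i\rangle$ obtained by deleting a single generator fit together inside $H$ is the genuinely technical point. Alternatively --- and this is the stance underlying the deeper results of this paper --- one feeds the reduction above into the classification of finite simple groups and the known structure of their automorphism groups, locating for each relevant pair $(S,x)$ a conjugate $x^g$ with $\langle x,x^g\rangle$ non-nilpotent. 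For the present, classical statement this is well known, and in practice we shall simply invoke it.
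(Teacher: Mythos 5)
The paper does not actually prove this theorem; it quotes the Baer--Suzuki theorem as classical background, so your proposal has to stand on its own. Your reductions are all correct and standard: the descent to the prime-power case via the primary decomposition of $x$ (using that a nilpotent group generated by two $q$-elements is a $q$-group), the induction that forces $G=\langle x^G\rangle$, the coprime argument showing $[x,O_{q'}(G)]=1$ and hence $O_{q'}(G)\le Z(G)$, and the elimination of $O_{q'}(G)$ and $O_q(G)$ to reach $F(G)=1$ with $G$ embedded in $\mathrm{Aut}(E(G))$. Each of these steps checks out.

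The problem is that you then stop exactly where the theorem begins. The configuration you arrive at --- a nontrivial $q$-element $x$ with $F(G)=1$, $G=\langle x^G\rangle$, and every $\langle x,x^g\rangle$ a $q$-group --- \emph{is} the Baer--Suzuki theorem; everything before it is routine bookkeeping that would work equally well for the (false) solvable analogue discussed in this paper's introduction. Writing ``for the present, classical statement this is well known, and in practice we shall simply invoke it'' is circular: you are invoking the theorem you set out to prove. You correctly identify the crux of the classical argument (take $H$ minimal among subgroups generated by conjugates of $x$ that fail to be $q$-groups, and show $H$ is generated by two conjugates --- the Alperin--Lyons style argument), and you correctly note that the technical point is controlling how the $q$-subgroups obtained by deleting one generator sit inside $H$, but you do not carry out that argument, nor do you carry out the CFSG-based alternative. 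As it stands the proposal is a correct reduction plus an unfulfilled promise; the essential content of the proof is missing. (The one-line dismissal of the linear case via ``an appropriate finite quotient'' is also unjustified as written, though that is a minor point by comparison.)
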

It is natural to ask if there is an analogous result if the
nilpotency condition is replaced with solvability. However, it is easy to
find counterexamples. For example, any two involutions generate a
dihedral group. So if $G$ is a non-abelian simple group and $x$ is an
involution in $G$ then $\langle x^{G} \rangle=G$ is not solvable
yet $\langle x, x^g \rangle$ is solvable for all $g \in G$. \par
There are also counterexamples when $x$ has order $3$.
Suppose that $x \in SL(n,3)$ ($n \geq 3$) has order $3$ and acts
trivially on some hyperplane; that is, $x$ is a transvection. Then
$x$ and any conjugate $x^g$ generate a group that acts trivially
on a subspace of codimension at most $2$.  Thus $\langle x, x^g
\rangle$ is solvable since it has a normal abelian subgroup $N$ such that $\langle x,x^g \rangle/N$ is isomorphic to a subgroup of
$GL(2,3)$. However, since $x$ is not central, it is not contained in the solvable radical of $SL(n,3)$ and $\langle x^G\rangle$
is not solvable.  The aim is to prove the following:
\begin{thma}
Let $G$ be a finite group. Suppose that $x \in G$ has prime order $p \ge 5$.
 If $\langle x, x^g \rangle$ is solvable for all $g \in G$ then
$\langle x^G \rangle $ is solvable. Equivalently, if $x \not\in
O_{\infty}(G)$ then there exists $g \in G$ such that $\langle x,
x^g \rangle$ is not solvable.
\end{thma}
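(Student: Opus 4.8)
The plan is to reduce Theorem~A to a statement about almost simple groups by a minimal counterexample argument together with the theory of the generalized Fitting subgroup, and then to prove the almost simple case through a case analysis based on the classification of finite simple groups.

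\emph{The reduction.} Suppose $G$ is a counterexample of least order. We may assume $G=\langle x^G\rangle$, so $G$ is not solvable; passing to $G/O_\infty(G)$ (noting that the image of $x$ again has order $p$, since otherwise $x\in O_\infty(G)$ and $\langle x^G\rangle$ would be solvable) and invoking minimality shows $O_\infty(G)=1$. Hence $F^*(G)=E(G)=S_1\times\cdots\times S_k$ is a direct product of non-abelian simple groups with $C_G(E(G))=1$, so $G$ embeds in $\mathrm{Aut}(E(G))$ and $x$ permutes the $S_i$. If $x$ moves some factor, then $\langle S_1,\dots,S_p,x\rangle$ involves a wreath product $S_1\wr C_p$, and a direct computation produces $g$ in the base group $S_1\times\cdots\times S_p$ for which the ``vertical component'' $x^{-1}x^g$ and its $\langle x\rangle$-conjugates involve two coordinates freely, so that $\langle x,x^g\rangle$ maps onto $S_1\times S_1$, which is not solvable --- a contradiction. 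So $x$ normalizes every $S_i$, and then, collapsing factors not moved by $G$ and projecting onto $\langle S_1, x|_{S_1}\rangle$, minimality lets us assume $k=1$: thus $G$ is almost simple with socle $S$, and it suffices to find $g\in S$ with $\langle x,x^g\rangle$ not solvable.

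\emph{The almost simple case.} I would run through the classification. For $S=A_n$ (and $G=S_n$): writing $x$ as a product of disjoint $p$-cycles, choose $x^g$ supported so that $\langle x,x^g\rangle$ is transitive and primitive on a set of $m\ge p\ge 5$ points; for instance, if $x$ is a single $p$-cycle then any $g$ with $\langle x\rangle\ne\langle x^g\rangle$ yields a transitive group of prime degree $p$ with more than one Sylow $p$-subgroup, hence primitive and non-affine, hence containing $A_p$, so $\langle x,x^g\rangle$ is not solvable. The sporadic groups are a finite check --- for each $G$ and each class of elements of order $p\ge 5$ one exhibits a conjugate with $\langle x,x^g\rangle$ involving, say, $A_5$ or $\mathrm{PSL}_2(p)$. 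The substantial case is the groups of Lie type, which I would split according to whether $p$ equals the defining characteristic. If it does, then $q=p^f\ge p\ge 5$ and $x$ is unipotent on the natural module, and one selects $x^g$ so that $\langle x,x^g\rangle$ stabilizes a subspace of small dimension and induces on it a group containing $\mathrm{SL}_2(q)$ (or $\mathrm{SL}_2(p)$), non-solvable precisely because $p\ge 5$. If $p$ is coprime to the characteristic, then $x$ is semisimple (or a field/graph automorphism), and one uses the structure of $C_G(x)$ and of the tori and reductive subgroups that could contain $\langle x,x^g\rangle$ to exhibit a non-solvable such subgroup, again typically an $\mathrm{SL}_2$ or an alternating section; a useful guiding principle here is that an element of prime order whose every pair of conjugates generates a solvable group must be extremely restricted (``transvection-like'').

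The hard part will be the groups of Lie type, and within them the small-rank classical groups and the exceptional groups, where one has no large ambient module and must argue carefully about which solvable subgroups can contain two conjugates of $x$. This is also exactly where the hypothesis $p\ge 5$ enters and cannot be relaxed: the boundary case is already visible in the transvection example in $\mathrm{SL}_n(3)$ from the introduction, where the group $\mathrm{SL}_2(3)$ induced on the codimension-$2$ subspace happens to be solvable, whereas $\mathrm{SL}_2(p)$ is not for $p\ge 5$. A secondary difficulty is arranging the reduction above uniformly enough that the almost simple statement with $g$ restricted to the socle is genuinely sufficient.
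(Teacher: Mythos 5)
Your overall strategy coincides with the paper's: a minimal-counterexample reduction through $F^*(G)=E(G)$ to the almost simple case, followed by a CFSG case analysis. The reduction itself is essentially the paper's Lemmas 1 and 2 and is sound. Two remarks on it: since $p$ is odd, the orbit of a component under $\langle x\rangle$ has length $1$ or $p\ge 5$, so only the ``long orbit'' case arises, and the choice of $g$ in the base group that you describe is exactly the paper's construction --- one sets the coordinates of $g$ to be suitable products of a generating pair $l_1,l_2$ of $L$ (two-generation of finite simple groups is the input here), so that the projection of $\langle x,x^g\rangle\cap E(G)$ onto one factor contains $L$; ``maps onto $S_1\times S_1$'' is more than you need and more than you have verified, but a non-solvable quotient of a subgroup suffices. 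In the alternating case your argument is fine except for the claim that a transitive group of prime degree with two Sylow $p$-subgroups ``contains $A_p$'': it need not ($PSL(2,11)$ on $11$ points, $M_{23}$, etc.), but by Galois/Burnside it is non-solvable, which is all that is required; the paper instead manufactures a $3$-cycle in $\langle x,x^g\rangle$ and invokes Jordan's theorem.

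The genuine gap is that the groups of Lie type and the sporadic groups --- which you correctly identify as ``the hard part'' --- are left as a plan rather than a proof, and this is where essentially all of the content of the theorem lies. Your guiding heuristics (find an invariant subspace of small dimension, or use the structure of $C_G(x)$) do not by themselves close the argument: for a semisimple element acting irreducibly on the natural module there is no invariant subspace and no parabolic overgroup, and one must classify all maximal overgroups of an element of order a primitive prime divisor of $q^n-1$ (the Guralnick--Penttila--Praeger--Saxl theorem), then eliminate each type, using for instance the fact that $x$ lies in a unique Singer normalizer together with a two-subgroup covering lemma. For semisimple elements in parabolics one needs the precise statement that $x$ is conjugate into a Levi factor without centralizing any of its components; for elements of the exceptional groups lying in no parabolic one needs the complete lists of maximal subgroups and an explicit counting inequality $|x^G|^2>\sum_i n_i|x^G\cap X_i|$; for $PSL(2,q)$ one needs a character-table computation to produce a commutator generating an irreducible torus; for field automorphisms one needs Lang's theorem to count the fixed conjugates of each subgroup type; and a number of small groups, the $PSU(3,q)$ unipotent case, and several sporadic classes are settled only by explicit machine computation. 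None of these steps is routine, and several (e.g.\ the boundary behaviour at $q=2,3$ and the exceptions of Table 1) show that the case analysis cannot be waved through by a uniform ``find an $SL_2$'' argument. As written, therefore, the proposal establishes the reduction and the alternating case but not the theorem.
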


It is worth noting that Theorem A implies the following result:
\begin{cor} \label{easycor}
Let $G$ be a finite (or linear) group. Then $G$ is solvable if and only if any two conjugates generate a solvable group. 
\end{cor}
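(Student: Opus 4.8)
The plan is as follows. The forward implication is trivial: every subgroup of a solvable group is solvable. For the converse I argue by contraposition, so assume first that $G$ is a finite non-solvable group; I will produce $x \in G$ and $g \in G$ with $\langle x, x^{g}\rangle$ not solvable. The real content is to locate inside a finite non-solvable group an element of prime order $p \ge 5$ lying outside the solvable radical, after which Theorem A finishes the job.

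First I would reduce to the case $O_{\infty}(G) = 1$. Put $\bar G = G/O_{\infty}(G)$. Then $\bar G$ is still non-solvable (otherwise $G$, being an extension of the solvable group $O_{\infty}(G)$ by the solvable group $\bar G$, would be solvable) and $O_{\infty}(\bar G) = 1$. Furthermore the hypothesis ``any two conjugates generate a solvable group'' passes to $\bar G$: two conjugate elements of $\bar G$ are images of two conjugate elements $x, x^{g}$ of $G$, and the group they generate in $\bar G$ is a homomorphic image of $\langle x, x^{g}\rangle$. Hence it is enough to treat groups with $O_{\infty}(G) = 1$. This reduction is what keeps the argument clean: an element of order $p$ in $G/O_{\infty}(G)$ need not lift to an element of order $p$ in $G$, whereas after the reduction the socle of $G$ is an actual subgroup.

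Now suppose $O_{\infty}(G) = 1$ and $G \ne 1$. Every minimal normal subgroup of $G$ is a nontrivial direct product of non-abelian simple groups, since an elementary abelian one would be contained in $O_{\infty}(G) = 1$; so $G$ has a non-abelian simple subgroup $S$. By Burnside's $p^{a}q^{b}$-theorem $|S|$ is divisible by at least three distinct primes, hence by some prime $p \ge 5$, and by Cauchy's theorem $S$ --- and therefore $G$ --- contains an element $x$ of order $p$. Since $x \ne 1 = O_{\infty}(G)$, Theorem A gives $g \in G$ with $\langle x, x^{g}\rangle$ not solvable, which is the required contradiction.

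It remains to handle linear groups, and this reduces to the finite case. A non-solvable linear group $G$ has a finitely generated non-solvable subgroup $H$, which inherits the hypothesis because elements conjugate in $H$ are conjugate in $G$; and a finitely generated non-solvable linear group admits a non-solvable finite quotient --- immediately when it is virtually solvable, and otherwise via the Tits alternative together with strong approximation for the Zariski-dense semisimple part. Applying the finite case to such a quotient finishes the proof. In short, the only non-elementary input is Theorem A; the single delicate point is to ensure that the element handed to Theorem A genuinely has prime order $\ge 5$, which is exactly why one passes to $G/O_{\infty}(G)$ first.
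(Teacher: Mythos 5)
Your proof is correct and follows essentially the same route as the paper: both arguments come down to using Burnside's $p^aq^b$-theorem to locate an element of prime order $p\ge 5$ outside the (trivial) solvable radical and then invoking Theorem A, the only difference being that you pass to $G/O_\infty(G)$ and take a simple subgroup of a minimal normal subgroup where the paper observes that a minimal counterexample is itself simple. Your sketch of the linear case (finitely generated non-solvable linear groups have non-solvable finite quotients) is the same standard reduction the paper delegates to its citation, though the usual implementation is via congruence quotients over a finitely generated coefficient ring together with Zassenhaus's bound on the derived length of solvable linear groups of bounded degree, rather than the Tits alternative and strong approximation.
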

\begin{proof} 
Let $G$ be a minimal counterexample to the version of the theorem for finite groups. Thus $G$ is a finite simple group by minimality and therefore $G$
contains an element $x$ of prime order $p \ge 5$.  So Theorem A implies that there exists $g \in G$ such that $\langle x,x^{g} \rangle$ is not solvable and thus $G$ is not a minimal counterexample. The version of the theorem for linear groups follows from the finite group version using a standard argument (see \cite[Corollary 1.2]{GGFar} for example).
\end{proof}
Also note that a minimal counterexample in Theorem~\ref{easycor} must be one of the minimal simple groups described by Thompson in the $N$-group paper \cite{Thompson}. Thus one could prove Theorem~\ref{easycor} without relying on the full Classification theorem by ruling out all of the minimal simple groups.  

Theorem A is also used in \cite{GGFar} to prove:
\begin{thm}\label{GGF1}
Let $G$ be a finite or linear group. Then $x \in G$ is contained in the solvable radical of $G$ if and only if $\langle x, x^{g_{1}}, x^{g_{2}}, x^{g_{3}} \rangle$ is solvable for all $g_{1}, g_{2}, g_{3} \in G$. 
\end{thm}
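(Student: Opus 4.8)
The plan is the following. The forward implication is immediate: if $x \in O_{\infty}(G)$ then, $O_{\infty}(G)$ being normal, every conjugate of $x$ lies in it, so $\langle x, x^{g_{1}}, x^{g_{2}}, x^{g_{3}} \rangle \le O_{\infty}(G)$ is solvable. For the converse I would argue by contradiction, choosing $(G,x)$ with $|G|$, and then $|x|$, minimal subject to the hypothesis holding but $x \notin O_{\infty}(G)$. Since $\langle x^{G} \rangle$ is normal in $G$ with $O_{\infty}(\langle x^{G} \rangle) = O_{\infty}(G) \cap \langle x^{G} \rangle$, minimality forces $G = \langle x^{G} \rangle$; passing to $G/O_{\infty}(G)$, whose solvable radical is trivial, and using that the hypothesis is inherited by quotients, minimality forces $O_{\infty}(G) = 1$. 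If the order of $x$ is not a prime power I would write $x$ as the product of its pairwise-commuting prime-power parts $x_{i}$: each $x_{i}$ is a power of $x$, hence satisfies the hypothesis and has strictly smaller order, so $x_{i} \in O_{\infty}(G) = 1$ by minimality and $x = \prod_{i} x_{i} = 1$, a contradiction; applying the same reasoning to $x^{p}$ excludes prime-power order $p^{a}$ with $a \ge 2$. Hence $x$ has prime order $p$.

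If $p \ge 5$, Theorem A applies at once: since $x \notin O_{\infty}(G)$ there is $g \in G$ with $\langle x, x^{g} \rangle$ not solvable, contradicting the hypothesis (here a single extra conjugate already suffices). So $p \in \{2,3\}$. Now $F^{*}(G) = E(G) = L_{1} \times \cdots \times L_{k}$ is a direct product of non-abelian simple groups (as $O_{\infty}(G)=1$ kills $F(G)$ and $Z(E(G))$), and $C_{G}(E(G)) = 1$, so $x$ acts non-trivially on $E(G)$. The group $\langle x \rangle$ permutes the components $L_{i}$ in orbits of size $1$ or $p$, giving two cases. If some orbit $\{L_{1}, \dots, L_{p}\}$ has size $p$, then, identifying the $L_{i}$ so that $x$ acts by a cyclic shift (possible since $x^{p}=1$ centralizes each $L_{i}$), for $g_{1}, g_{2}$ supported on $L_{1}$ the commutators $[x,g_{1}], [x,g_{2}]$ lie in $L_{1} \times \cdots \times L_{p}$, and choosing $g_{1},g_{2}$ so that their projections to the first coordinate generate $L_{1}$ makes $\langle x, x^{g_{1}}, x^{g_{2}} \rangle$ non-solvable, a contradiction well inside the budget of four conjugates. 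Otherwise $x$ normalizes every component and, not centralizing $E(G)$, acts non-trivially on some $L := L_{1}$; setting $A = \langle L, x \rangle \le N_{G}(L)$ and $\overline{A} = A/C_{A}(L)$ yields an almost simple group with socle $\cong L$ and $\overline{x} \ne 1$ of prime order $p$, and since conjugating $x$ by elements of $A$ keeps us inside $A$, the hypothesis descends to $(\overline{A}, \overline{x})$. As $O_{\infty}(\overline{A}) = 1$, it remains to treat the almost simple case, i.e.\ to show: \emph{if $G$ is almost simple and $x \in G$ has order $2$ or $3$, then there exist $g_{1},g_{2},g_{3} \in G$ with $\langle x, x^{g_{1}}, x^{g_{2}}, x^{g_{3}} \rangle$ non-solvable} (so that, via the descended hypothesis, $\overline{x} \in O_{\infty}(\overline{A}) = 1$, a contradiction).

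I expect this last, almost simple, statement to be the main obstacle: unlike Theorem A it does not admit a uniform treatment and needs a case analysis through the classification of finite simple groups --- alternating, classical, exceptional, sporadic --- using explicit data on conjugacy classes, maximal subgroups, and generation of simple groups by few conjugates of a fixed element, in order to force $\langle x, x^{g_{1}}, x^{g_{2}}, x^{g_{3}} \rangle$ to contain the socle (or at least to be non-solvable). This is exactly where the bound of four conjugates rather than two becomes unavoidable: two involutions always generate a dihedral group, and for $p = 3$ there is a short explicit list of almost simple groups --- the ``very few exceptions'' of the abstract --- in which two conjugates of $x$ need not generate a non-solvable subgroup, so in each such configuration one must exhibit three conjugates that already do. A secondary point to check carefully is that the hypothesis really does transfer to $(\overline{A}, \overline{x})$ once the conjugating elements are restricted to the section $\overline{A}$, and that the component-permuting case is genuinely as elementary as sketched above.
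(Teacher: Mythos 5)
Your reduction is essentially the standard one and is sound: the forward direction, the minimal-counterexample reductions to $O_{\infty}(G)=1$ and to prime order, the component-permuting case (which is exactly Lemma~\ref{reduction} of this paper, and your two-commutator argument there is correct since the subgroup generated by $[x,g_{1}]$ and $[x,g_{2}]$ surjects onto $L_{1}$), and the descent of the hypothesis to $\overline{A}=\langle L,x\rangle/C_{\langle L,x \rangle}(L)$ all work. For $p\ge 5$ you correctly invoke Theorem A, and for $p=3$ the almost simple statement you need is precisely the last clause of Theorem A* (``in any case there exist $g_{1},g_{2},g_{3}$ such that $\langle x,x^{g_{1}},x^{g_{2}},x^{g_{3}}\rangle$ is not solvable''), so you could cite it outright rather than redo the classification analysis. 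One small correction there: for the exceptions $G_{0}\cong PSU(n,2)$ and $PSp(2n,3)$ in Table~\ref{exceptions}, Theorem A* says that \emph{three} conjugates of $x$ need not suffice, so your remark that one exhibits ``three conjugates that already do'' is wrong for those two families; this is exactly why the theorem is stated with four conjugates even for $p=3$. You should also note that this paper does not prove Theorem~\ref{GGF1} itself --- it is quoted from \cite{GGFar}, with the present paper supplying only the odd-order input.

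The genuine gap is the case $p=2$. Everything proved in this paper --- Theorem A, Theorem A*, and the entire case analysis --- concerns elements of \emph{odd} prime order, and nothing here can be applied to involutions. Yet the involution case is where the theorem is hardest and where the bound of four conjugates is really forced: two conjugate involutions always generate a dihedral group, so no two-conjugate criterion exists, and one must prove from scratch (via its own trip through the classification, centralizer-of-involution structure, and counting arguments) that in every almost simple group four conjugate involutions can be chosen to generate a non-solvable subgroup. That assertion is the actual content of \cite{GGFar}; identifying it as ``the main obstacle'' is accurate, but it is not a step you can wave at --- it is most of the theorem. A secondary omission is the linear-group version of the statement, which requires the standard specialization argument reducing to finite groups (as cited in the proof of Corollary~\ref{easycor}).
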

The proof in \cite{GGFar} relies on the Classification of Finite Simple Groups, however a weaker version of the theorem for finite groups is also given in \cite{GGFar} that does not rely on the Classification theorem:
\begin{thm}\label{GGF2}
Let $G$ be a finite group. Then $x \in G$ is contained in the solvable radical of $G$ if and only if every 7 conjugates of $x$ generate a solvable group.
\end{thm}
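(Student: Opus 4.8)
The plan is to prove the two implications separately, the forward one being immediate: if $x\in O_{\infty}(G)$ then every conjugate of $x$ lies in the solvable normal subgroup $O_{\infty}(G)$, so any $7$ of them generate a subgroup of a solvable group. For the converse I would argue the contrapositive — if $x\notin O_{\infty}(G)$ then $\langle x^{g_{1}},\dots,x^{g_{7}}\rangle$ is non-solvable for some $g_{i}\in G$ — by taking a counterexample $G$ of least order. A chain of routine reductions then applies. Since $\langle (x^{g_{1}})^{m},\dots\rangle\le\langle x^{g_{1}},\dots\rangle$, the hypothesis passes to every power of $x$; passing first to $G/O_{\infty}(G)$ (still a counterexample, now with trivial solvable radical) minimality forces $O_{\infty}(G)=1$, and then lets me take $x$ of prime order $p$; replacing $G$ by the non-solvable normal subgroup $\langle x^{G}\rangle$ and invoking minimality once more forces $G=\langle x^{G}\rangle$. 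Since $O_{\infty}(G)=1$ we get $F(G)=1$, so $F^{*}(G)=E(G)=S_{1}\times\cdots\times S_{k}$ is a direct product of non-abelian simple groups with $C_{G}(F^{*}(G))=1$, hence $G\hookrightarrow\operatorname{Aut}(F^{*}(G))$; and one may assume $G$ acts transitively on $\{S_{1},\dots,S_{k}\}$ (otherwise the action of $G$ on the product of the factors in a single orbit gives a proper quotient — proper since $Z(F^{*}(G))=1$ and $G=\langle x^{G}\rangle$ is not centralized by that product — satisfying the same hypotheses, contradicting minimality).

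The proof then splits according to how $x$ permutes the components. If $x$ normalizes every $S_{i}$, then so does every conjugate of $x$ (the pointwise stabilizer of the components is normal in $G$), hence $G=\langle x^{G}\rangle$ does, and transitivity forces $k=1$. So $G$ is almost simple with socle $S$, and since $\langle x^{G}\rangle\cap S$ is a non-trivial normal subgroup of $G$ inside the simple group $S$ we get $G=\langle x^{G}\rangle\supseteq S$. Everything now rests on what I would call the \emph{almost simple lemma}: in an almost simple group with socle $S$, every non-trivial element $x$ of prime order has at most six conjugates that, together with $x$, generate a non-solvable subgroup. (If the Classification is allowed, Theorem~A settles $p\ge5$ using a single further conjugate; the point here is to stay Classification-free.) To prove the lemma without the Classification I would invoke Thompson's determination of the non-solvable $N$-groups in the $N$-group paper \cite{Thompson}, together with Feit--Thompson: the aim is to show that the standing hypothesis — no $7$ conjugates of $x$ generate a non-solvable subgroup — forces $S$, or a suitable simple section of $G$ through which the available conjugates of $x$ act, to have all of its relevant local subgroups solvable, so that it appears on Thompson's explicit short list ($\operatorname{PSL}(2,q)$, $\operatorname{Sz}(2^{m})$, $\operatorname{PSL}(3,3)$ and a few others), where the claim is checked directly from the completely known subgroup structure, conjugacy classes and outer automorphism groups of those groups.

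If instead $x$ moves some component, then, $x$ having prime order $p$, it acts on $\{S_{1},\dots,S_{k}\}$ as a product of $p$-cycles and fixed points with at least one $p$-cycle; when $p\ge3$ and enough components are moved the induced permutation group is already non-solvable and two conjugates of $x$ suffice, so the essential case is a single $p$-cycle, say on $S_{1},\dots,S_{p}$. Here I would work inside $F^{*}(G)$: for $g\in S_{1}$ the element $x^{-1}x^{g}$ lies in $S_{1}S_{p}\subseteq F^{*}(G)$ and has $S_{1}$-coordinate equal to $g$, so if $g_{1},g_{2}\in S_{1}$ generate a non-solvable subgroup of $S_{1}$ then $\langle x^{-1}x^{g_{1}},x^{-1}x^{g_{2}}\rangle\le\langle x,x^{g_{1}},x^{g_{2}}\rangle$ projects onto it and is non-solvable — three conjugates in all. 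Such $g_{1},g_{2}$ exist because every non-abelian simple group contains a $2$-generated non-solvable subgroup: a minimal non-solvable subgroup $H$ has $H/O_{\infty}(H)$ a non-solvable $N$-group, hence $2$-generated by \cite{Thompson}, and lifting two generators makes $H$ itself $2$-generated (any proper subgroup of $H$ being solvable). Since this case consumes only three conjugates while the almost simple lemma consumes up to seven (counting $x$ itself), the overall constant that emerges is $7$.

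The step I expect to be the main obstacle is the almost simple lemma carried out without the Classification: extracting, from the purely combinatorial hypothesis on $x$, enough local solvability to bring Thompson's $N$-group theorem to bear, and then performing the explicit verification uniformly over the families $\operatorname{PSL}(2,q)$, $\operatorname{Sz}(2^{m})$ and $\operatorname{PSL}(3,3)$ — hardest when $p=2$, where the profusion of dihedral and other small solvable subgroups makes it delicate to bound the number of conjugate involutions required, and when $x$ is outer, where field (and, for $\operatorname{PSL}(2,q)$, diagonal) automorphisms must be handled. A secondary but genuine difficulty is keeping the constant at $7$: a casually arranged chain of reductions loses track of how many conjugates are consumed, so each reduction above must be made as economical as possible.
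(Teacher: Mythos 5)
A preliminary remark: the paper does not prove this statement. Theorem~\ref{GGF2} is quoted from \cite{GGFar} purely as context for Theorem A, so there is no in-paper proof to compare yours against; I can only assess the proposal on its own terms.

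Your reduction apparatus is correct and Classification-free: the trivial forward direction; the passage to $O_{\infty}(G)=1$, to $x$ of prime order, to $G=\langle x^G\rangle$, and to $F^*(G)=E(G)$ a direct product of non-abelian simple groups permuted transitively; and the case where $x$ moves a component, which you settle with three conjugates. That last step is a genuinely nice improvement on the paper's Lemma~\ref{reduction}, which invokes two-generation of simple groups (a CFSG-dependent fact), whereas your route through minimal non-solvable subgroups and Thompson's list of minimal simple groups avoids it. The genuine gap is the ``almost simple lemma,'' which is where all of the content of the theorem lives, and the mechanism you propose for it does not work. The hypothesis available in a minimal counterexample constrains only subgroups generated by conjugates of the one element $x$; it says nothing about arbitrary local subgroups of the socle $S$, so there is no way to force $S$ to ``have all of its relevant local subgroups solvable'' and hence to appear on Thompson's list in \cite{Thompson}. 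Here $S$ is an arbitrary non-abelian simple group. Contrast the paper's Corollary~\ref{easycor}: there the hypothesis quantifies over \emph{all} pairs of conjugate elements, so it is inherited by every subgroup and a minimal counterexample really is a minimal simple group; that inheritance is exactly what fails once $x$ is fixed. What minimality does give you is that $x\in O_{\infty}(M)$ for every proper subgroup $M$ containing $x$, whence every collection of at most seven conjugates of $x$ generates either $G$ or a solvable group --- but deriving a contradiction from this for every almost simple $G$ without the Classification is the hard theorem (even \emph{with} the Classification it occupies essentially this entire paper, via Theorem A*), not a finite verification over $PSL(2,q)$, $Sz(2^m)$ and $PSL(3,3)$. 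As written, the proposal establishes the reduction to the almost simple case but not the theorem.
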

Theorems~\ref{GGF1} and \ref{GGF2} were announced in \cite{GPS} (see Theorems 7.3 and 7.4). Furthermore, Theorem A and Theorem~\ref{GGF1} have been obtained independently in \cite{GGKP} and \cite{GGKP2}, also using the Classification theorem.

\section{Reduction}

 Lemma \ref{reduction} below simplifies matters considerably. It reduces the proof to
 a situation where $G$ is an almost simple group.
\begin{lem} \label{reduction}
Suppose that $G$ is a finite group such that the Fitting subgroup
$F(G)$ is trivial. Let $L$ be a component of $G$. \\
{\rm (a)} If $x$ is an element of $G$ such that $x \not\in N_G(L)$ and
$x^2 \not\in C_G(L)$ then there exists an element $g$ in $G$ such
that $\left\langle x,x^g \right\rangle$ is not solvable. \\
{\rm (b)} If  $x$ is an element of $G$ such that $x \not\in N_G(L)$ and
$x^2 \in C_G(L)$ then there exist elements $g_1$ and $g_2$ in $G$
such that $\left\langle x,x^{g_1}, x^{g_2} \right\rangle$ is not
solvable.
\end{lem}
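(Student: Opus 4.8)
The plan is to analyze the action of $x$ on the set of components of $G$, using the fact that $F(G) = 1$ forces $G$ to embed into the product of automorphism groups of its components, with $E(G) = \prod L_i$ the product of all components and $C_G(E(G)) = Z(E(G)) = 1$. Since $x \notin N_G(L)$, the element $x$ moves $L$ to a distinct component $L^x = L_1$, and more generally $x$ permutes the $G$-conjugates of $L$ nontrivially; let $L = L_0, L_1 = L_0^x, L_2 = L_1^x, \dots$ be (part of) a cycle of $x$ on these components, of length $k \geq 2$. The strategy for both parts is to choose conjugates $x^{g_i}$ supported on a controlled set of components so that the group they generate, when projected to a suitable subproduct of the $L_i$, surjects onto (or involves) a nonabelian simple group or an alternating group $A_n$ with $n$ large — hence is not solvable.

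For part (a), the extra hypothesis $x^2 \notin C_G(L)$ means that either $k \geq 3$, or $k = 2$ and $x^2$ induces a nontrivial (hence nonsolvable-diagonal-producing is not quite right, but) nontrivial automorphism on $L$. First I would dispose of the case where $x$ has a cycle of length $k \geq 3$ on components $G$-conjugate to $L$: picking $g$ so that $x^g$ cyclically permutes a translated copy of the same component cycle, one arranges that $\langle x, x^g \rangle$ projects onto a group whose action on the union of the two orbits of components is as a transitive (or primitive) group of degree $\geq 5$ — more precisely one gets a subgroup of a wreath product $L \wr S_m$ projecting onto a transitive subgroup of $S_m$ that contains a product of two $k$-cycles chosen to generate $A_m$ or $S_m$ for suitable $m$; since $m \geq 5$ can be forced (e.g. $m = 2k - \gcd$ or by iterating), this group is nonsolvable. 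The residual case is $k = 2$ with $x^2$ acting nontrivially on $L$: here I would use that $\langle x, x^g \rangle$ for appropriately chosen $g$ contains $\langle (x^2)|_L, (x^2)^g|_{L} \rangle$ as a section inside $\mathrm{Aut}(L)$, and invoke a Baer--Suzuki-type or direct argument — but more cleanly, one translates the component $L_0$ by a $g$ with $L_0^g = L_1$, forms $\langle x, x^g \rangle$, and checks that its image in $\mathrm{Sym}$ of the four components $\{L_0, L_1, L_0^g \cdot \text{(images)}\}$ together with the diagonal-avoiding action produces $A_4$ or larger combined with the nontrivial automorphism action on the base, yielding nonsolvability; if the permutation part collapses, the nontriviality of $x^2$ on $L$ combined with a generating conjugate forces a nonsolvable subgroup of $\mathrm{Aut}(L)$.

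For part (b), we have $x \notin N_G(L)$ but $x^2 \in C_G(L)$; since $C_G(E(G)) = 1$ and $x^2$ centralizes $L$, the element $x$ induces a fixed-point-free involution (or a product of $2$-cycles) on the component orbit containing $L$, and it acts trivially (after squaring) so its action on $L \times L^x$ is essentially by the swap $(u,v) \mapsto (v^{\alpha}, u^{\beta})$ for some automorphisms, with the constraint coming from $x^2 \in C_G(L)$. With only two generators the permutation action one can build on a $2$-cycle orbit is too small to be nonsolvable, which is precisely why three conjugates are needed. I would choose $g_1$ and $g_2$ so that $L, L^{g_1}, L^{g_2}$ and their $x$-images $L^x, L^{xg_1}, L^{xg_2}$ are six distinct components (or as many distinct as possible), and arrange that $\langle x, x^{g_1}, x^{g_2} \rangle$ projects onto a transitive subgroup of $S_6$ (or $S_m$, $m \geq 5$) generated by three double transpositions in ``general position'' — three such involutions generically generate a nonsolvable group (e.g. $A_5$, $S_5$, $A_6$, or $S_6$). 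The main obstacle, and the part requiring the most care, is verifying that the conjugating elements $g_1, g_2$ can genuinely be chosen inside $G$ (not just in the overgroup $\prod \mathrm{Aut}(L_i)$) to realize the desired component permutations and to avoid degenerate collisions of components that would shrink the permutation image below degree $5$ or make it solvable; this uses transitivity of $G$ on the $G$-conjugates of $L$ and a counting/pigeonhole argument on orbit lengths, with a small number of genuinely exceptional small configurations (short orbits) handled by hand.
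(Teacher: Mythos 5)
Your reduction of the problem to the action of $x$ on the components and your identification of the case split (either $x$ has a cycle of length $k\ge 3$ on the conjugates of $L$, or $k=2$ and $x^2$ induces a nontrivial automorphism of $L$, versus $k=2$ with $x^2\in C_G(L)$ for part (b)) are both correct and match the paper's setup. The gap is in the mechanism you propose for producing nonsolvability: you try to extract it from the permutation image of $\langle x,x^{g}\rangle$ (resp.\ $\langle x,x^{g_1},x^{g_2}\rangle$) on the set of components, aiming for a transitive group of degree $\ge 5$ or three double transpositions generating $A_5$ or $A_6$. This cannot work. After reducing to $G=\langle L,x\rangle$ (which is legitimate and is what the paper does), the image of the \emph{entire group} $G$ in the symmetric group on the $t$ components is the cyclic group $\langle\tau\rangle$ generated by the $t$-cycle $\tau$ induced by $x$; every conjugate $x^{g}$ induces a conjugate of $\tau$ by an element of $\langle\tau\rangle$, i.e.\ $\tau$ itself. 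So the permutation image of any number of conjugates of $x$ is cyclic, hence solvable, and there is no ``translated copy'' of the component cycle available. The example $G=L\wr C_t$ with $x$ a generator of the top group already kills the strategy: for $t=2$ it is exactly the critical configuration of part (b), and your three ``double transpositions in general position'' all collapse to the single transposition swapping the two factors.

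The nonsolvability has to come from the coordinates in $\mathrm{Aut}(L)$, not from the top. The paper does this by first normalizing $x$ to the form $(y,1,\dots,1)\tau$ and then computing commutators of $x$ with base-group elements $g=(w_1,\dots,w_t)$: one gets elements of the form $(w_2w_1^{-1},\dots)$ and $(w_1yw_t^{-1}y^{-1},\dots)$ inside $\langle x,x^{g}\rangle$. For $t\ge 3$ one can prescribe the first coordinates of two such elements independently, and since every finite simple group is $2$-generated one forces the first-coordinate projection to contain $L$. For $t=2$ in part (a) the essential input you are missing is the Guralnick--Kantor spread theorem: since $y\ne 1$ and $\langle y,L\rangle$ is almost simple, there exists $z$ with $\langle y,z\rangle\supseteq L$, and a short computation with $x^{2k-1}x^{(1,l)}$ places $z$ in the first coordinate while $x^2=(y,y)$ supplies $y$; your appeal to ``a Baer--Suzuki-type or direct argument'' does not substitute for this. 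For part (b) the same commutator trick with $g_i=(1,l_i)$ puts a generating pair $l_1,l_2$ of $L$ into the first coordinate, which is why two extra conjugates suffice --- not because of any permutation action of degree $\ge 5$.
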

\begin{proof}
  Write $E(G)$ for the subgroup of $G$ generated by its
  components. Then the generalized Fitting subgroup is
  $F^{*}(G)=E(G)F(G)$. Since $F(G)=1$, it follows that
  $Z(F^*(G))=Z(E(G))$ is a normal abelian subgroup of $G$ and is
  therefore trivial. Also, $Z(E(G))$ is generated by the centers
  of each component of $G$ and so all of the components of $G$ are
  simple. Moreover, $E(G)$ must be a direct product of the
  components of $G$. So $G$ is embedded in  ${\rm Aut}(F^*(G)) =
  {\rm Aut}(E(G))$. It suffices to assume that $G =  \langle L,x
  \rangle$. Thus if $t:= |\{ L^{x^i} \: : \: \mbox{for } i=1,2,\dots
  \}|$ then $E(G)=L \times \cdots \times L^{x^{t-1}}$ and ${\rm
  Aut}(E(G)) \cong {\rm Aut}(L) \wr S_t$. Since $x$ does not
  normalize $L$, it follows that $t \ge 2$. Moreover, it suffices
  to assume that $x=(\sigma_1, \ldots ,\sigma_t) \tau$ where
  $\sigma_i \in {\rm Aut}(L^{x^{i-1}})$ and $\tau$ is the
  $t$-cycle $(12 \cdots t)$. Now observe that
\begin{align*}
  x^{(u_1, \dots, u_t)}& = (u_1, \dots, u_t)(\sigma_1, \dots ,
  \sigma_t)\tau(u_1, \dots,
  u_t)^{-1}\tau^{-1} \tau \\
  & = (u_1, \dots, u_t)(\sigma_1, \dots , \sigma_t)(u_t^{-1},
  u_1^{-1}, \ldots, u_{t-1}^{-1})\tau.
\end{align*}
So if
\begin{displaymath}
 \begin{split}
   & u_t =1 , u_{t-1}= \sigma_t, u_{t-2} = (\sigma_t \sigma_{t-1}),
   u_{t-3} = (\sigma_t \sigma_{t-1} \sigma_{t-2}), \ldots , \\
   & u_1 = (\sigma_t \sigma_{t-1} \cdots \sigma_{1}).
 \end{split}
\end{displaymath}
then $x^{(u_1, \dots, u_t)}= (y,1,1, \ldots,1)\tau $ for some $y
\in {\rm Aut}(L)$.  Thus, it suffices to assume that $x$ is of this
form. \\
Now let $g:=(w_1,\ldots, w_t) \in {\rm Aut}(L) \times \cdots
\times {\rm Aut}(L^{x^{t-1}})$ so that
\begin{displaymath}
  x^{-1}(w_1, \ldots, w_t)x(w_1^{-1}, \ldots, w_t^{-1})=
  (w_2w_1^{-1},\ldots )
\end{displaymath}
and
\begin{displaymath}
  (w_1, \ldots , w_t)x (w_1, \ldots , w_t)^{-1} x^{-1} = (w_1 y
  w_t^{-1} y^{-1}, \ldots)
\end{displaymath}
First, suppose that $t \ge 3$. By \cite[Theorem B]{GurAs}, there exist $l_1$ and $l_2$ in $L$ such that $L=\langle l_1, l_2 \rangle$. So define $w_1=1$, $w_2=l_1$, and 
$w_t=y^{-1}l_2y$. Thus $\langle x,x^g \rangle$ contains $(l_1,\ldots)$ and $(l_2,\ldots)$ and is not solvable. If $t=2$ and $x^2 \not\in C_G(L)$, then $x=(y,1)\tau$ and since 
$x^2=(y,y)$, it follows that $y \not=1$. Now $\left \langle y,L  \right\rangle$ is almost simple so by \cite{GK} there exists $z \in \left \langle y,L  \right\rangle$ such that 
$\left \langle y,z \right\rangle$ contains $L$. Observe that there exists $l \in L$ such that $z=y^{k}l$. So define $w_1:=1$ and $w_2:=l$ and then
\begin{displaymath}
  \begin{split}
    x^{2k-1}x^{(w_1,w_2)} = & (y^k,y^{k-1})\tau(w_1,w_2)(y,1)\tau(w_1^{-1},w_2^{-1})
    \\
    = & (y^kw_2w_1^{-1},\cdot)  =  (z,\cdot)
  \end{split}
\end{displaymath}
and so $\langle x,x^{(w_1,w_2)}\rangle$ cannot be solvable. This proves part
(a). \\
\indent To prove (b), suppose that $x$ does not normalize $L$ and $x^2 \in
C_G(L)$. So it suffices to assume that $t=2$ and $x= \tau$. If
$g_1:=(1,l_1)$ and $g_2:=(1,l_2)$ then
\begin{displaymath}
  x^{-1}x^{g_1}= (l_1,\cdot); \: x^{-1}x^{g_2}= (l_2,\cdot)
\end{displaymath}
and thus $\langle x,x^{g_1}, x^{g_2} \rangle$ is not solvable.
This proves part (b) of Lemma \ref{reduction}.
\end{proof}

\begin{lem} Suppose that $(x,G)$ is a
minimal counterexample. Then $G$ is almost simple.
\end{lem}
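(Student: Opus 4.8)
The plan is to strip $G$ down to an almost simple group by repeatedly replacing the pair $(x,G)$ by one of no larger order and invoking minimality, using Lemma~\ref{reduction} to control how $x$ permutes the components. First I would reduce to the case $F(G)=1$. If $O_\infty(G)\neq 1$, put $\bar G=G/O_\infty(G)$ and $\bar x=xO_\infty(G)$. Since $x\notin O_\infty(G)$ we have $\bar x\neq 1$, so $\bar x$ has prime order $p$; solvability of $\langle x,x^g\rangle$ passes to the image, so $\langle\bar x,\bar x^{\bar g}\rangle$ is solvable for all $\bar g\in\bar G$; and $O_\infty(\bar G)=1$, so $\bar x\notin O_\infty(\bar G)$. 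Thus $(\bar x,\bar G)$ is a counterexample of smaller order, a contradiction. Hence $O_\infty(G)=1$ and in particular $F(G)=1$; then $F^*(G)=E(G)$ is a nontrivial direct product of nonabelian simple groups (the components of $G$) and $C_G(E(G))=Z(E(G))=1$.

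Next I would show that $x$ normalizes every component of $G$ and acts nontrivially on at least one. Suppose $x\notin N_G(L)$ for some component $L$. Since $x$ has odd order $p$ we cannot have $x^2\in C_G(L)$ (otherwise $\langle x\rangle=\langle x^2\rangle\le C_G(L)\le N_G(L)$), so Lemma~\ref{reduction}(a) produces $g\in G$ with $\langle x,x^g\rangle$ not solvable, contradicting the choice of $(x,G)$. Therefore $x$ normalizes, hence acts on, every component; as $x\neq 1$ it acts nontrivially on some component $L$, i.e.\ $x\notin C_G(L)$.

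Then I would reduce first to $G=\langle L,x\rangle$ and then to $C_G(L)=1$. Put $H:=\langle L,x\rangle$. Then $L\trianglelefteq H$, so $O_\infty(H)\cap L$ is a solvable normal subgroup of $L$ and hence trivial, whence $O_\infty(H)\le C_H(L)$; since $x\notin C_H(L)$, the pair $(x,H)$ is again a counterexample and minimality forces $H=G$. Now set $C:=C_G(L)\trianglelefteq G$, $\bar G:=G/C$, $\bar L:=LC/C$. For any $g$ in the preimage of $C_{\bar G}(\bar L)$ one has $[g,\ell]\in L$ (as $L\trianglelefteq G$) and $[g,\ell]\in C$, so $[g,\ell]\in L\cap C=Z(L)=1$ for all $\ell\in L$; hence $g\in C$, so $C_{\bar G}(\bar L)=1$ and therefore $O_\infty(\bar G)\le C_{\bar G}(\bar L)=1$. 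Since $x\notin C$, the pair $(xC,\bar G)$ is a counterexample, so minimality gives $C=1$. Then the conjugation action embeds $G$ in ${\rm Aut}(L)$ with $L\trianglelefteq G$ and $C_G(L)=1$, so $L={\rm soc}(G)$ and $G$ is almost simple.

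The routine content is the bookkeeping at each reduction: one must check that the new distinguished element still has prime order $p\ge 5$ (rather than order $1$), that every pair of its conjugates still generates a solvable group (immediate, since solvability is inherited by subgroups and quotients), and that it still lies outside the solvable radical (which is where one computes $O_\infty$ of a quotient by $O_\infty$, or of an almost simple group). The one genuinely delicate point — and the only place the hypothesis that $p$ is odd is used — is the second step, where $x^2\in C_G(L)$ is ruled out so that case (b) of Lemma~\ref{reduction} never intervenes; for $p=2$ this step fails, which is precisely why involutions furnish counterexamples to the naive solvable analogue of Baer--Suzuki.
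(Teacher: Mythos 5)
Your proof is correct and takes essentially the same route as the paper's: both arguments use minimality to kill the solvable radical, invoke Lemma~\ref{reduction}(a) together with the oddness of $p$ to force $x$ to normalize every component, and then cut $G$ down to $\langle x,L\rangle$ with $C_G(L)=1$ so that $G$ embeds in ${\rm Aut}(L)$ over ${\rm Inn}(L)$. The only difference is organizational --- the paper routes the reduction through a minimal normal subgroup $N$ and a commutator argument showing $G=\langle x,N\rangle$ before identifying $N$ with a single component, whereas you pass directly to a component not centralized by $x$ and then quotient by $C_G(L)$; both are sound.
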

\begin{proof}
Since $(x,G)$ is a minimal counterexample, the solvable radical of $G$ is trivial. Let $N$ be a minimal normal subgroup. So $N \cong L \times \cdots \times L$ for some non-abelian simple group $L$. If $x \in N$ then $G=N$ since otherwise $ \langle x^N \rangle$ would be a solvable normal subgroup of $N$, and $N$ does not have any such subgroups. Thus, if $x \in N$ then $G$ is simple since $G$ has no non-trivial normal subgroups. Now assume that $x \not\in N$ and let $H:= \langle x,N  \rangle$. If $G \ne H$ then $ \langle x^H \rangle \cap N$ is a solvable normal subgroup of $N$ and is thus trivial. Thus $[x,N]=1$, which is not possible, because it would follow that $ [ \langle x^G   \rangle , N] = 1$. Since $N$ is a minimal normal subgroup,  $\langle x^{G}\rangle \cap N$ would be trivial and thus $\langle (xN)^{G/N} \rangle \cong  \langle x^G   \rangle N/ N \cong  \langle x^G   \rangle$. This is not possible since $\langle (xN)^{G/N}\rangle$ is solvable by minimality.  So $G=H= \langle x,N \rangle$. Note that the Fitting subgroup of $G$ is trivial since the solvable radical is trivial and thus $x$ normalizes every component by Lemma~\ref{reduction}. So $L$ is normal in $G$, $N=L$ and $G =  \langle x,L \rangle$.  Now $G$ is almost simple since $L$ is the unique minimal normal subgroup of $G$.
\end{proof}
The Classification of Finite Simple Groups can be used to determine
the possibilities for the socle $G_0$ of $G$, and thus eliminate
each possibility case by case. In fact, the following theorem is
slightly stronger and implies Theorem A.

\begin{thma*}
Let G be a finite almost simple group with socle $G_0$.
Suppose that $x$ is an element of odd prime order in $G$. Then one
of the following holds. \\
  {\rm (i)} There exists $g \in G$ such that $\langle x , x^g \rangle $ is
  not solvable. \\
  {\rm (ii)} $p=3$ and $(x,G_0)$ belongs to a short list of exceptions given
  in Table \ref{exceptions}. Moreover, there exist $g_1, g_2 \in G$
  such that $\langle x , x^{g_1}, x^{g_2} \rangle$ is not solvable,
  unless $G_0 \cong PSU(n,2)$, $PSp(2n,3)$. In any case, there exist
  $g_1,g_2,g_3 \in G$ such that
  $\langle x , x^{g_1}, x^{g_2}, x^{g_3} \rangle$ is not solvable.\\
\end{thma*}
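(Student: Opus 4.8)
The plan is to prove Theorem~A${}^{*}$ by a case analysis over the possibilities for the socle $G_{0}$ provided by the Classification of Finite Simple Groups, in each case exhibiting explicit conjugates of $x$ that generate a non-solvable group. The recurring device is easy to state: to establish (i) it suffices to find $g\in G$ for which $\langle x,x^{g}\rangle$ has a non-abelian composition factor, and in practice one produces, inside $\langle x,x^{g}\rangle$ or as a quotient of it, a copy of $\mathrm{SL}_{2}(q)$ with $q\ge 4$ or an alternating group $A_{m}$ with $m\ge 5$. Conversely, landing in case (ii) forces $x$ to move as little as possible: for a classical $G_{0}$ with natural module $V$ this means $\dim[V,x]=1$, i.e.\ $x$ is a transvection, so that $\langle x,x^{g}\rangle$ fixes pointwise a subspace of codimension at most $2$ and therefore has a normal abelian subgroup with quotient isomorphic to a subgroup of $\mathrm{GL}_{2}(p)$; since $\mathrm{GL}_{2}(p)$ is solvable only for $p\in\{2,3\}$, this already pins $p=3$ and identifies transvection-type elements as the source of the exceptions.

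I would organize the argument into three blocks. For $G_{0}=A_{n}$ the element $x$ is a product of disjoint $p$-cycles; choosing $g$ so that the support of $x^{g}$ meets that of $x$ in a single point makes $\langle x,x^{g}\rangle$ a transitive permutation group of bounded degree, and a short argument using Jordan-type primitivity and generation theorems shows one may always arrange $\langle x,x^{g}\rangle\ge A_{m}$ for some $m\ge 5$, so (i) holds with no exceptions. For the sporadic $G_{0}$ one checks, for each class of elements of odd prime order, using the known maximal-subgroup structure and character-table data, that two suitable conjugates lie in no common solvable subgroup; again no exceptions. For $G_{0}$ of Lie type in defining characteristic $r=p$ the element $x$ is unipotent, and for classical types one studies $[V,x]$: when $\dim[V,x]\ge 2$ one chooses $g$ so that $[V,x]+[V,x^{g}]$ is large and nondegenerate and argues that $\langle x,x^{g}\rangle$ induces on it a non-solvable group (containing $\mathrm{SL}_{2}(q)$, $\mathrm{Sp}_{4}(q)$, or an alternating section); when $\dim[V,x]=1$ one is in the transvection situation, forcing $p=3$ and $G_{0}$ among $\mathrm{PSL}_{n}(3)$, $\mathrm{PSp}_{2n}(3)$ and the like; the exceptional Lie-type groups are treated with root subgroups and the subsystem $\mathrm{SL}_{2}$'s attached to a (long or short) root element $x$, producing the remaining small-characteristic table entries. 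When $r\ne p$, so $x$ is semisimple of order $p$, it lies in a maximal torus; using centralizer structure and subsystem subgroups one finds $g$ with $\langle x,x^{g}\rangle\supseteq\mathrm{SL}_{2}(q)$ for some $q\ge 4$ with $p\mid q-1$ or $p\mid q+1$, giving (i), except when $x$ has minimal support, which over $\mathbb{F}_{2}$ is the order-$3$ unitary reflection giving the $\mathrm{PSU}_{n}(2)$ exception.

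For the ``moreover'' clauses one returns to the exceptional cases, where the dichotomy has a clean explanation. Three conjugates of a transvection all lie in the pointwise stabilizer of a subspace of codimension at most $3$; in $\mathrm{GL}_{n}(3)$ this stabilizer involves a copy of $\mathrm{GL}_{3}(3)$, which is not solvable (indeed $|\mathrm{GL}_{3}(3)|$ is divisible by $13$), so one can choose the conjugates to generate a non-solvable section --- in fact $\mathrm{SL}_{3}(3)$, which is generated by transvections --- and obtain the required triple; this covers $\mathrm{PSL}_{n}(3)$ and the table entries other than $\mathrm{PSp}_{2n}(3)$, $\mathrm{PSU}_{n}(2)$. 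For $G_{0}=\mathrm{PSp}_{2n}(3)$ (resp.\ $\mathrm{PSU}_{n}(2)$) the corresponding pointwise stabilizer is instead a $\{2,3\}$-group, because the effective action on the $\le 3$-dimensional ``missing'' space is symplectic (resp.\ unitary) of order dividing $|\mathrm{Sp}_{2}(3)|$ (resp.\ $|\mathrm{GU}_{3}(2)|=2^{3}3^{4}$), so by Burnside's $p^{a}q^{b}$ theorem every group generated by three conjugates of the symplectic transvection (resp.\ unitary $3$-reflection) is solvable and no triple works; but four such conjugates can be chosen to span a nondegenerate $4$-space with $\langle x,x^{g_{1}},x^{g_{2}},x^{g_{3}}\rangle$ containing $\mathrm{Sp}_{4}(3)$ (resp.\ $\mathrm{SU}_{4}(2)$), which is non-solvable, giving the quadruple in all cases.

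The step I expect to be the main obstacle is the defining-characteristic classical-group analysis, namely proving that transvections (and, for $\mathrm{PSU}_{n}(2)$, the order-$3$ reflections) really are the only elements of odd prime order for which $\langle x,x^{g}\rangle$ is solvable for every $g$. This requires, for each classical type and each relevant class of elements of order $p$, carefully controlling the subgroup generated by $x$ together with a conjugate placed in general position so as to exhibit a non-solvable section, and it requires handling the low-rank and small groups --- $\mathrm{PSL}_{2}(q)$, $\mathrm{PSL}_{3}(q)$, $\mathrm{PSp}_{4}(q)$, $\mathrm{PSU}_{3}(q)$, $G_{2}(q)$, and the twisted rank-one groups --- one at a time, where the generic argument degenerates and any further exceptions would have to be located.
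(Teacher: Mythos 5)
Your skeleton agrees with the paper's in outline --- a CFSG case division with transvections and order-$3$ unitary reflections as the source of the exceptions --- and your explanation of the ``moreover'' dichotomy is essentially the right one: three conjugates of a symplectic transvection over $\mathbb{F}_3$ (resp.\ of a reflection in $GU(n,2)$) act through a section of $Sp(2,3)$ (resp.\ of the $\{2,3\}$-group $GU(3,2)$) modulo a unipotent kernel, hence generate a solvable group, while four can be made to generate $Sp(4,3)$ or $SU(4,2)$. But two substantive pieces of the theorem are not addressed by your plan. First, $x$ ranges over an almost simple group, so it may be an outer automorphism of odd prime order --- a field automorphism, or a triality of $D_4(q)$ or ${}^3D_4(q)$. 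Your unipotent/semisimple dichotomy and the $[V,x]$ analysis presuppose $x\in{\rm Inndiag}(G_0)$. The paper devotes real work to this case: for $PSL(2,q)$ it bounds the number of $y\in x^{G_0}$ with $\langle x,y\rangle\ne G$ by counting, via Lang's theorem, the $x$-invariant Borel, torus and subfield subgroups; it then reduces general field automorphisms to their action on a fundamental $SL_2$, leaving only Suzuki--Ree groups and trialities to handle separately.

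Second, your device for (i) --- choose $g$ so that $[V,x]+[V,x^{g}]$ is large and ``argue'' that the induced group is non-solvable, or else induct into a smaller classical group on an invariant subspace --- has no purchase when $x$ acts irreducibly on $V$ (e.g.\ a generator of a Singer torus, or a semisimple element of an exceptional group lying in no parabolic). There is then no invariant subspace to induct on, and general position gives the whole space with no control over $\langle x,x^{g}\rangle$. The paper uses an entirely different mechanism here: the classification of subgroups of $GL(n,q)$ containing primitive-prime-divisor elements, the theorem that $x^{G}$ cannot be covered by two proper subgroups, and the counting inequality $|x^G|^2>\sum_i |x^G\cap X_i|^2[G:X_i]$ run over the known lists of maximal subgroups (this is also how the non-parabolic semisimple classes in $E_l(q)$, $F_4(q)$, etc.\ are dispatched). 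All of this sits inside a minimal-counterexample induction (the set ${\mathcal A}$ of the paper) that makes ``reduce to a smaller almost simple section'' rigorous; your direct-construction framing leaves exactly these hardest cases, which you yourself flag as the main obstacle, without an argument. A smaller point: ``the pointwise stabilizer involves a non-solvable $GL(3,3)$, so the conjugates can be chosen to generate a non-solvable section'' is a non sequitur as written; what is actually needed (and what the paper quotes) is that three conjugate transvections generate $SL(3,3)$.
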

\begin{table}
\begin{center}
  \begin{tabular}{|c|c|}
    \hline
    $G_0$ & $x$ \\
    \hline
    $PSL(n,3)$, $n>2$ & transvection  \\
    $PSp(2n,3)$, $n>1$ & transvection \\
    $PSU(n,3)$, $n>2$ & transvection \\
    $PSU(n,2)$, $n>3$ & reflection of order $3$\\
    $P \Omega^{\epsilon}(n,3)$, $n>6$ & $x$ a long root element \\
    $E_l(3),F_4(3), {^2}E_6(3), {^3}D_4(3)$ & $x$ a long root element \\
    $G_2(3)$ & $x$ a long or short root element \\
    $G_2(2)^{\prime} \cong PSU(3,3)$ & transvection \\
    \hline
  \end{tabular}
\vspace{5mm} \caption{List of exceptions to Theorem A*}
\label{exceptions}
\end{center}
\end{table}

\begin{cor}
Let $G$ be an almost simple group, and suppose that $x \in G$ has prime order $p \ge 5$. Suppose that $x$ is contained in the solvable radical of all proper subgroups $M$ containing $x$. Then there exists $g \in G$ such that $\left \langle x,x^g  \right \rangle = G$.
\end{cor}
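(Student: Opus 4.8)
The plan is to deduce this from Theorem A by a short argument, using the fact that an almost simple group has trivial solvable radical. I argue by contradiction: suppose that $\langle x, x^g \rangle \ne G$ for every $g \in G$. First I note that $O_{\infty}(G) = 1$: writing $G_0$ for the socle, so that $G_0 \trianglelefteq G \le {\rm Aut}(G_0)$ with $G_0$ nonabelian simple, we have $C_G(G_0) = 1$, and any solvable normal subgroup $R$ of $G$ meets $G_0$ in a solvable normal subgroup of the simple group $G_0$, hence trivially; therefore $[R, G_0] \le R \cap G_0 = 1$ and so $R \le C_G(G_0) = 1$. By Theorem A it then suffices to show that $\langle x, x^g \rangle$ is solvable for every $g \in G$, for in that case $x \in O_{\infty}(G) = 1$, contradicting that $x$ has order $p \ge 5$.

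So fix $g \in G$ and put $M := \langle x, x^g \rangle$, which is a proper subgroup of $G$ by assumption. Since $M$ is a proper subgroup containing $x$, the hypothesis gives $x \in O_{\infty}(M)$. To see that $x^g$ also lies in $O_{\infty}(M)$, I apply the hypothesis to the conjugate subgroup $M^{g^{-1}} = \langle x^{g^{-1}}, x \rangle$: this is again a proper subgroup of $G$ containing $x$, so $x \in O_{\infty}(M^{g^{-1}})$, and conjugating by $g$ (using $O_{\infty}(H^{g}) = O_{\infty}(H)^{g}$) gives $x^g \in O_{\infty}(M^{g^{-1}})^g = O_{\infty}(M)$. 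Hence $M = \langle x, x^g \rangle \le O_{\infty}(M)$, so $M$ is solvable, as required. Since $g$ was arbitrary, Theorem A finishes the contradiction.

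Once Theorem A is available there is really no obstacle here; the only point needing a moment's thought is that the hypothesis is phrased for proper subgroups containing $x$, not $x^g$, so one cannot immediately conclude $x^g \in O_{\infty}(M)$. Passing to the conjugate $M^{g^{-1}}$, which still contains $x$, is the small manoeuvre that repairs this, and everything else is bookkeeping.
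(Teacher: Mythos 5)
Your proposal is correct and is essentially the paper's argument run in the contrapositive: the paper takes the $g$ supplied by Theorem A* with $\langle x,x^g\rangle$ non-solvable and shows a proper overgroup would force solvability, whereas you assume every $\langle x,x^g\rangle$ is proper, deduce each is solvable, and invoke Theorem A together with $O_{\infty}(G)=1$. Your conjugation step showing $x^{g}\in O_{\infty}(M)$ via $M^{g^{-1}}$ is a detail the paper leaves implicit, and it is a genuine (if small) point worth making explicit.
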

\begin{proof}
By Theorem A*, there exists $g\in G$ such that $\left \langle x,x^g  \right \rangle$ is not solvable. If $\left \langle x,x^g  \right \rangle \ne G$ then it is contained in some maximal subgroup $M$. However, the hypothesis implies that $x \in  O_{\infty}(M)$ which would mean that $\left \langle x,x^g  \right \rangle$ would be solvable. Thus $\left \langle x,x^g  \right \rangle = G$.
\end{proof}
Clearly, we only need to check that the hypothesis in the corollary is true for all {\it maximal} subgroups. Indeed, if $x \in M$ and $M < M^{\prime} < G$ then $ \langle x^{M^{\prime}}  \rangle$ is solvable, therefore $\left \langle x^M  \right \rangle$ is solvable and thus $x \in O_{\infty}(M)$.
\section{Preliminaries}

Let $\overline{G}$ be a simple classical algebraic group of
adjoint type over the algebraic closure of $\mathbb{F}_q$. Let
$\sigma$ be a Frobenius morphism of $\overline{G}$ such that
$\overline{G}_{\sigma}:=\{g \in \overline{G} \: : \:
g^{\sigma}=g\}$ is a finite almost simple classical group over
$\mathbb{F}_q$. Write $G_0$ for the socle of
$\overline{G}_{\sigma}$ and note that $\overline{G}_{\sigma}$ is
the group ${\rm Inndiag}(G_0)$ of inner diagonal automorphisms of
$G_0$. A collection of lemmas, definitions, and theorems are listed below, which will be very useful in the sequel:

\begin{lem} \label{lift}
 Let $x \in \overline{G}_{\sigma}$ have odd prime order $r$.
 Define $(G,\hat{G})$ as follows:
  \vspace{+1mm}
\begin{center}
 \begin{tabular}{|c|ccc|}
   \hline
   $G_0$  & $PSL^{\epsilon}_n(q)$ & $PSp_n(q)$ & $P\Omega_n^{\epsilon}(q)$ \\
   \hline
$(G, \hat{G})$ & $(\overline{G}_{\sigma}, GL^{\epsilon}_n(q))$
&$(G_0, Sp_n(q))$ & $(G_0, \Omega_n^{\epsilon}(q))$ \\
   \hline
\end{tabular}
\end{center}
\vspace{+1mm}
 {\rm (a)} Then one of the following holds: \\
{\rm (i)} $x$ lifts to an element $\hat{x} \in \hat{G}$ of order r such
that $|x^G| = |\hat{x}^{\hat{G}}|$; \\
{\rm (ii)} $G_0 = PSL^{\epsilon}_n(q)$ , $r \mid \gcd(q- \epsilon,n)$ and $x$
is $\overline{G}$-conjugate to $[I_{\frac{n}{r}},\omega
I_{\frac{n}{r}}, \ldots, \omega^{r-1} I_{\frac{n}{r}}]$ where
$\omega$ is a primitive $r$th root of unity. \\
{\rm (b)} If $r \nmid q$ then $x^{G_0} = x^{\overline{G}_{\sigma}}$.
\end{lem}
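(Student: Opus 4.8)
The plan is to work with an arbitrary preimage $\hat{y}\in\hat{G}$ of $x$ under the natural surjection $\pi\colon\hat{G}\to G$, whose kernel $Z$ is a central cyclic group: in the linear/unitary case $Z$ is the group of scalars, of order $q-\epsilon$, while in the symplectic and orthogonal cases $|Z|$ divides $2$. Since $\pi(\hat{y})=x$ has order $r$ we have $\hat{y}^{r}\in Z$. Suppose first that $\gcd(|Z|,r)=1$ --- automatic for $Sp_n(q)$ and $\Omega_n^{\epsilon}(q)$, and holding in the linear/unitary case exactly when $r\nmid q-\epsilon$. Then $\langle\hat{y}\rangle$ is cyclic of order divisible by $r$, so it has a unique subgroup of order $r$; replacing a generator of that subgroup by a suitable power produces a lift $\hat{x}\in\hat{G}$ of $x$ of order $r$. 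To get $|x^{G}|=|\hat{x}^{\hat{G}}|$ it is enough to see that $\pi$ restricts to an injection on $\hat{x}^{\hat{G}}$, and indeed if $z\hat{x}^{g}$ lies in $\hat{x}^{\hat{G}}$ then it has order $r$, so $z^{r}=(z\hat{x}^{g})^{r}=1$ and hence $z=1$. This is conclusion (i).

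It remains to handle $G_{0}=PSL^{\epsilon}_n(q)$ with $r\mid q-\epsilon$; write $\hat{y}^{r}=\zeta I$ with $\zeta$ a scalar. Taking determinants (norms, in the unitary case) gives $\zeta^{n}=\det(\hat{y})^{r}\in Z^{r}$. If $r\nmid n$ then raising to the $n$th power is an automorphism of the cyclic group $Z/Z^{r}$ of order $r$, so $\zeta\in Z^{r}$; choosing a scalar $z_{0}$ with $z_{0}^{r}=\zeta^{-1}$ gives an order-$r$ lift $\hat{x}=z_{0}\hat{y}$, whose eigenvalue multiplicities sum to $n$ and so cannot all be equal. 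Hence multiplication by a primitive $r$th root of unity $\omega$ does not preserve the eigenvalue multiset of $\hat{x}$, and the injectivity argument goes through as before (any $z\hat{x}^{g}$ in the class has order $r$, so $z\in\langle\omega I\rangle$, so $z$ fixes the eigenvalue multiset, so $z=1$); again we get (i). If $r\mid n$, then either the eigenvalue multiset of $\hat{y}$ \emph{considered in $\overline{G}$} (i.e.\ up to scaling) has all $r$ multiplicities equal, in which case $x$ is $\overline{G}$-conjugate to $[I_{n/r},\omega I_{n/r},\dots,\omega^{r-1}I_{n/r}]$ and we are in case (ii); or it does not, in which case I claim $\zeta\in Z^{r}$ and a good lift exists as above, giving (i). The delicate point is that $\zeta\notin Z^{r}$ forces case (ii): the eigenvalues of $\hat{y}$ are $\xi,\xi\omega,\dots,\xi\omega^{r-1}$ with $\xi^{r}=\zeta$, and $\zeta\notin Z^{r}$ puts $\xi$ outside the relevant subfield, so the ambient Frobenius permutes these $r$ eigenvalues in a single $r$-cycle, forcing their multiplicities to agree and hence $x$ to be $\overline{G}$-conjugate to $[I_{n/r},\dots,\omega^{r-1}I_{n/r}]$. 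This finishes part (a).

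For part (b), $r\nmid q$ makes $x$ semisimple, and since $\overline{G}_{\sigma}/G_{0}$ is abelian the claim $x^{G_{0}}=x^{\overline{G}_{\sigma}}$ is equivalent to $C_{\overline{G}_{\sigma}}(x)$ mapping onto $\overline{G}_{\sigma}/G_{0}$. In the linear and unitary cases this follows from part (a): the centralizer in $\hat{G}$ of a good lift $\hat{x}$ is a direct product of general linear (respectively unitary) groups over field extensions, so its determinant (respectively norm) map already exhausts $\mathbb{F}_{q}^{\times}$ (respectively the norm-one subgroup of scalars), whence the image of $C_{\hat{G}}(\hat{x})$ in $\overline{G}_{\sigma}$ already surjects onto $\overline{G}_{\sigma}/G_{0}$; a direct computation disposes of the case-(ii) elements. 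In the symplectic and orthogonal cases $\overline{G}_{\sigma}/G_{0}$ is a $2$-group while $x$ has odd order, and here I would pass to the corresponding conformal group --- $GSp_n(q)$, or a conformal spin group in the orthogonal case --- which has simply connected derived subgroup and connected centre, so that the centralizer of a semisimple element is connected; a Lang--Steinberg argument then shows that the similitude character surjects from that centralizer, which yields the required surjectivity onto $\overline{G}_{\sigma}/G_{0}$. (Alternatively one can quote the known descriptions of semisimple classes in finite classical groups.)

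The main obstacle is the $PSL^{\epsilon}_n(q)$ analysis with $r\mid\gcd(q-\epsilon,n)$: separating a genuine class-size-preserving lift (case (i)) from the exceptional diagonal class (case (ii)) cleanly, and in particular getting the unitary conjugacy and centralizer bookkeeping exactly right, since the Hermitian form restricts which eigenvalue multisets can occur and how centralizers split into factors. The orthogonal subcase of part (b) is a secondary technical nuisance, because there $\overline{G}$ is of adjoint type and centralizers of semisimple elements need not be connected, so one genuinely has to pass to a conformal spin group rather than argue inside $\overline{G}_{\sigma}$ directly.
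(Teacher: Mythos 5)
The paper does not prove this lemma at all: its ``proof'' is the single line ``See [Bs, 3.11] and [GLS, 4.2.2(j)]'', so you have effectively reconstructed the content of those two cited results from scratch. Your reconstruction is essentially correct. For part (a) your dichotomy is exactly right: when $\gcd(|Z|,r)=1$ the unique order-$r$ subgroup of $\langle\hat y\rangle$ gives the lift, and the observation that $z\hat x^{\,g}\in\hat x^{\hat G}$ forces $z^r=1$ gives the class-size equality; when $r\mid q-\epsilon$ the determinant computation $\zeta^n=\det(\hat y)^r$ correctly isolates $r\mid n$ as the only obstruction, and your key claim --- that $\zeta\notin Z^r$ forces the Frobenius (respectively $\lambda\mapsto\lambda^{-q}$ in the unitary case) to permute the eigenvalues $\xi,\xi\omega,\dots,\xi\omega^{r-1}$ in a single $r$-cycle, equalizing the multiplicities and landing you in case (ii) --- checks out, since $\xi^{q-\epsilon}$ is then a nontrivial $r$-th root of unity and $\omega$ itself is fixed. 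Your part (b) converges on what is essentially the method behind [GLS, 4.2.2]: reduce to surjectivity of $C_{\overline G_\sigma}(x)$ onto $\overline G_\sigma/G_0$, handle type $A$ by the explicit product decomposition of $C_{\hat G}(\hat x)$ and surjectivity of the (norm composed with) determinant, and handle the other types by passing to a group with connected centre and simply connected derived group so that semisimple centralizers are connected and Lang--Steinberg applies. What your route buys is self-containedness and an explicit picture of exactly which classes fail to lift (useful elsewhere in the paper, e.g.\ in the $p\mid(q+1,n)$ unitary analysis); what the citation buys is avoiding precisely the two bookkeeping burdens you flag.

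The one place where your sketch is genuinely thin is part (b) for the orthogonal groups: there $\overline G_\sigma/G_0$ can be $\mathbb{Z}/4$ or $\mathbb{Z}/2\times\mathbb{Z}/2$ and is detected by the spinor norm together with the similitude character, not by a single character, so ``the similitude character surjects'' is not quite the right target and the conformal-spin-group argument needs to track both invariants. You correctly identify that connectedness of centralizers in the adjoint group fails and that one must pass upward, so this is an incompleteness of detail rather than a wrong idea; your stated fallback of quoting the classification of semisimple classes (which is in effect what the paper does) closes it.
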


\begin{proof}
  See \cite[3.11]{Bs} and \cite[4.2.2(j)]{GLS}
\end{proof}
\begin{dfn}
Let ${\mathcal A}$ be the set of pairs $(x,H)$ such that:
\begin{itemize}
 \item[{\rm (i)}] $x$ is an element of odd prime order contained in a
group $H$;
\item[{\rm (ii)}] $H/O_{\infty}(H)$ is almost simple;
\item[{\rm (iii)}] $x$ is not contained in $O_{\infty}(H)$; \item[(iv)]
$(x,H/O_{\infty}(H))$ is not one of the examples in Table
\ref{exceptions}.
\end{itemize}
\end{dfn}
\begin{lem} \label{InnDiag} \label{Autl} \label{as}
 {\rm(a)} If $x \in G$ is an inner-diagonal automorphism of $G_0$ and $|x^{G_0}|=
  |x^{\overline{G}_{\sigma}}|$ then it suffices to take $G =\overline{G}_{\sigma}$. \\
  {\rm(b)} If y is some $Aut(G_0)$-conjugate of $x$ and there exists
  $l \in G_0$ such that $\langle y,y^l \rangle$ is not solvable then
  there exists $l^{\prime}\in G_0$ such that $\langle x,x^{l^{\prime}} \rangle$
  is not solvable. \\
  {\rm(c)} If $x$ is contained in $H$, a proper subgroup of $G$, and
  $(x,H) \in {\mathcal A}$ then $G$ cannot be a minimal counterexample to
  Theorem A*.
\end{lem}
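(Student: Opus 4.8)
For part (a), the plan is to observe that the hypothesis forces $x^{G_0} = x^{\overline{G}_{\sigma}}$: indeed $x^{G_0} \subseteq x^{\overline{G}_{\sigma}}$ always, and the two sets have equal cardinality by assumption, so they coincide. Consequently every $\overline{G}_{\sigma}$-conjugate of $x$ is already realised by conjugating $x$ by an element of $G_0$, and $G_0 \le G$. Hence, if one works inside $\overline{G}_{\sigma} = \mathrm{Inndiag}(G_0)$ and produces elements $g_1,\dots,g_k \in \overline{G}_{\sigma}$ (with $k \in \{1,2,3\}$) such that $\langle x, x^{g_1}, \dots, x^{g_k}\rangle$ is not solvable, then choosing $l_i \in G_0$ with $x^{l_i} = x^{g_i}$ gives $\langle x, x^{l_1}, \dots, x^{l_k}\rangle$, the very same subgroup, now visibly contained in $G$. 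Since $\overline{G}_{\sigma}$ is itself almost simple with socle $G_0$, establishing the conclusion of Theorem A* for the single pair $(x, \overline{G}_{\sigma})$ therefore yields it for every almost simple $G$ with socle $G_0$ containing $x$; that is, it suffices to take $G = \overline{G}_{\sigma}$.

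For part (b), write $y = x^a$ with $a \in \mathrm{Aut}(G_0)$ and suppose $l \in G_0$ is such that $\langle y, y^l\rangle$ is not solvable. Conjugating by $a^{-1}$ gives $\langle y, y^l\rangle^{a^{-1}} = \langle x, x^{a l a^{-1}}\rangle$, which is therefore not solvable either. Because $G_0 \trianglelefteq \mathrm{Aut}(G_0)$, the element $l' := a l a^{-1}$ lies in $G_0 \le G$, so $\langle x, x^{l'}\rangle$ is a non-solvable subgroup of $G$. In other words, when verifying clause (i) of Theorem A* one may freely replace $x$ by any $\mathrm{Aut}(G_0)$-conjugate, which is exactly what permits convenient choices of conjugacy-class representatives in the case analysis.

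For part (c), suppose toward a contradiction that $(x,G)$ is a minimal counterexample to Theorem A* and that $x$ lies in a proper subgroup $H < G$ with $(x,H) \in \mathcal{A}$. Set $\overline{H} := H/O_{\infty}(H)$ and $\overline{x} := x\,O_{\infty}(H)$. By the definition of $\mathcal{A}$: $\overline{H}$ is almost simple; $\overline{x} \ne 1$ (since $x \notin O_{\infty}(H)$), and its order equals the odd prime $|x|$; and $(\overline{x}, \mathrm{soc}(\overline{H}))$ does not appear in Table \ref{exceptions}. Moreover $|\overline{H}| \le |H| < |G|$, so by minimality of $G$ the conclusion of Theorem A* holds for the admissible pair $(\overline{x}, \overline{H})$; as this pair is not one of the exceptions, clause (i) must hold, giving $\overline{g} \in \overline{H}$ with $\langle \overline{x}, \overline{x}^{\overline{g}}\rangle$ not solvable. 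Choosing a preimage $g \in H$ of $\overline{g}$, the subgroup $\langle x, x^g\rangle \le H \le G$ maps onto $\langle \overline{x}, \overline{x}^{\overline{g}}\rangle$ under $H \to \overline{H}$ and so is itself not solvable. Thus clause (i) of Theorem A* holds for $(x,G)$, contradicting the choice of $(x,G)$; hence $G$ cannot be a minimal counterexample.

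The three arguments are routine; the only point requiring attention is the bookkeeping in (a) and (c). In (a) one must note that $\overline{G}_{\sigma} = \mathrm{Inndiag}(G_0)$ is genuinely almost simple with socle $G_0$ (hence with trivial solvable radical), and in (c) one must check, directly from the definition of $\mathcal{A}$, that $(\overline{x}, \overline{H})$ is an admissible instance of Theorem A* lying outside Table \ref{exceptions}, so that the minimality hypothesis may legitimately be invoked for it. I do not expect any serious obstacle beyond this.
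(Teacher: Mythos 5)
Your proposal is correct and follows essentially the same route as the paper: part (a) uses the equality $x^{G_0}=x^{\overline{G}_{\sigma}}$ to replace conjugating elements by elements of $G_0\le G$, part (b) is the same conjugation-by-$a^{-1}$ computation with $l'=ala^{-1}\in G_0$ by normality, and part (c) (which the paper dismisses as ``Trivial'') is the same appeal to minimality applied to the almost simple quotient $H/O_{\infty}(H)$.
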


\begin{proof}
  (a) Suppose that the theorem is true for $\overline{G}_{\sigma}$. If
  $x$ is contained in $G$ then $x \in \overline{G}_{\sigma}$ and so there exists $g \in
  \overline{G}_{\sigma}$ such that $\langle x^g ,x \rangle$ is not
  solvable. But then there exists $g_1 \in G_0$ such that $x^{g_1} = x^g$ by the condition. \\
  (b) Suppose that $y=x^g$ for some $g \in Aut(G_0)$. Then $\langle
  y,y^l \rangle^{g^{-1}}= \langle x,y^{lg^{-1}} \rangle=\langle
  x,x^{l^{\prime}}
  \rangle$ since $lg^{-1}=g^{-1}glg^{-1}=g^{-1}l^{\prime}$.\\
  (c) Trivial.
\end{proof}

\begin{lem} \label{count}
Let $X_1, \ldots X_k$ be representatives for the conjugacy classes
of maximal subgroups containing $x$. Let $n_i$ be the number of
conjugates of $X_i$ that contain $x$. If
\begin{displaymath}
  |x^G|^2  > \sum_i n_i |x^G \cap X_i| = \sum |x^G
  \cap X_i|^2[G:X_i].
\end{displaymath}
then there exists $g \in G$ such that $\langle x , x^g \rangle =G$
\end{lem}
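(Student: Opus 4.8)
The plan is to use a straightforward double-counting (inclusion–exclusion style) argument on the set of ordered pairs $(y_1,y_2)$ of conjugates of $x$ that fail to generate $G$. First I would observe that if $\langle x, x^g\rangle \ne G$ for every $g \in G$, then $\langle x, x^g\rangle$ is contained in some maximal subgroup of $G$; since $x \in \langle x, x^g\rangle$, that maximal subgroup lies in one of the conjugacy classes represented by $X_1,\dots,X_k$, and in fact it is one of the $n_i$ conjugates of $X_i$ that contain $x$. So the hypothesis ``$\langle x,x^g\rangle \ne G$ for all $g$'' would force
\begin{displaymath}
  \{x^g : g \in G\} \ \subseteq\ \bigcup_{i=1}^{k}\ \bigcup_{\substack{Y \text{ a conjugate of } X_i\\ x \in Y}} Y .
\end{displaymath}
Counting the right-hand side crudely (without worrying about overlaps), each such $Y$ contains exactly $|x^G \cap Y| = |x^G \cap X_i|$ conjugates of $x$, and there are $n_i$ choices of $Y$ for each $i$, so we would get $|x^G| \le \sum_i n_i\,|x^G \cap X_i|$.

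To turn this into the stated inequality with $|x^G|^2$ on the left, I would instead count ordered pairs. Fix the conjugate $x$ itself; for $|x^G|$ many group elements $g$ (one representative of each coset, or simply counting over $g\in G$ and dividing by $|C_G(x)|$) we would need $x^g$ to lie in the union above. But the cleaner route, which matches the displayed formula exactly, is to count pairs $(g_1,g_2)$ with $\langle x^{g_1}, x^{g_2}\rangle \ne G$: such a pair lies in a common conjugate of some $X_i$, the number of conjugates of $X_i$ is $[G:X_i]$, and each contains $|x^G\cap X_i|$ conjugates of $x$, hence $|x^G\cap X_i|^2$ ordered pairs; summing gives at most $\sum_i |x^G\cap X_i|^2[G:X_i]$ bad pairs. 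Since there are $|x^G|^2$ pairs in total, if this sum is strictly less than $|x^G|^2$ then some pair $(x^{g_1},x^{g_2})$ generates $G$, and conjugating by $g_1^{-1}$ gives $g\in G$ with $\langle x, x^g\rangle = G$. The identity $\sum_i n_i|x^G\cap X_i| = \sum_i |x^G\cap X_i|^2[G:X_i]$ is itself a double count of pairs (conjugate of $X_i$ containing $x$, conjugate of $x$ in it), using $n_i/|x^G| = |x^G\cap X_i|/[G:X_i]$, i.e.\ $n_i [G:X_i]^{-1}\cdot|x^G| \cdot (\dots)$ — more precisely $n_i \cdot |x^G| = |x^G\cap X_i|\cdot [G:X_i]$ by counting pairs $(g, Y)$ with $x^g \in Y$, $Y$ conjugate to $X_i$.

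I do not expect any serious obstacle here; this is a counting lemma and the only subtlety is making sure the overcount is in the right direction (we are bounding the number of \emph{bad} pairs from above by ignoring the fact that a bad pair may lie in several conjugates and that these conjugates may contain fewer than $|x^G \cap X_i|$ elements of $x^G$ if $x\notin Y$ — but we only sum over $Y$ containing $x$ on one side, and over all conjugates on the other, so both bounds go the correct way). The one point to state carefully is the equality of the two expressions in the displayed inequality, which follows from the orbit-counting relation $n_i \,[G{:}N_G(X_i)]^{-1}$... — concretely, from $|x^G| \cdot n_i = |x^G \cap X_i| \cdot [G : X_i]$, proved by counting the set $\{(y, Y) : Y \sim X_i,\ y \in x^G \cap Y\}$ in two ways. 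With that identity in hand the lemma is immediate.
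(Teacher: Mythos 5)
Your argument is correct and is essentially the paper's: the paper likewise observes that if no $\langle x,x^g\rangle$ generates, then $x^G$ is covered by the $n_i$ conjugates of each $X_i$ containing $x$, deduces $|x^G|\le\sum_i n_i|x^G\cap X_i|$, and converts via the same orbit-counting identity $n_i\,|x^G|=|x^G\cap X_i|\,[G:X_i]$ that you prove by counting pairs $(y,Y)$. Your symmetric count of ordered pairs is just that bound multiplied through by $|x^G|$, and it correctly pins down $\sum_i|x^G\cap X_i|^2[G:X_i]$ (rather than $\sum_i n_i|x^G\cap X_i|$, which is smaller by a factor of $|x^G|$) as the quantity to compare with $|x^G|^2$; the displayed ``equality'' between the two sums in the statement, reproduced in the last line of the paper's proof, is off by exactly that factor.
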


\begin{proof}
  Let $X_{i1}, \ldots , X_{in_i}$ be the conjugates of $X_i$ that
  contain $x$.  The aim is to show that $x^G$ cannot be contained
  in $\cup_{i,j} X_{ij}$, since this proves the lemma.  It is not
  hard to show that $n_i/[G:X_i]= |x^G \cap X_i| / |x^G|$. It then
  follows that
\begin{displaymath}
  \begin{split}
    |x^G \cap \cup_{i,j} X_{ij}|  &\le \sum_i n_i |x^G \cap X_i| \\
    &= \sum |x^G \cap X_i|^2[G:X_i]/|x^G|
  \end{split}
\end{displaymath}
and so if $x^G$ were contained in $\cup_{i,j} X_{ij}$ then
\begin{displaymath}
  \begin{split}
    |x^G| = |x^G \cap \cup_{i,j} X_{ij}|  & \le \sum_i n_i |x^G \cap X_i| \\
    &= \sum |x^G \cap X_i|^2[G:X_i]/|x^G|.
  \end{split}
\end{displaymath}
However, this implies that
\begin{displaymath}
  |x^G|^2  \le \sum_i n_i |x^G \cap X_i| = \sum |x^G
  \cap X_i|^2[G:X_i],
\end{displaymath}
which contradicts the hypothesis.
\end{proof}
{\flushleft \textit{Remark}} If
\begin{displaymath}
  |G|/|C_{G}(x)|^2 > \sum_{i} |x^G \cap X_i|
\end{displaymath}
then the conclusion of the theorem holds.\\

\begin{lem}\label{2.2}
  Let $G_0$ be a simple group of Lie type and suppose that G satisfies
  $G_0 \trianglelefteq G \le \rm{Inndiag}(G_0)$. \\
  {\rm (i)} Suppose that $x \in G$ is unipotent and $P_1$ and $P_2$ are distinct maximal parabolic subgroups containing a common Borel subgroup, with unipotent radicals $U_1$ and $U_2$ respectively. Then there exists $i \in \{1,2\}$ such that $x$ is $G$-conjugate to an element of $P_i \backslash U_i$. \\
  {\rm (ii)} Suppose that $x \in G$ is semisimple and is contained in a parabolic subgroup of $G$. Suppose further that the Lie rank of $G_0$ is at least $2$. Then there exists a maximal parabolic subgroup $P$ with a Levi complement $J$ such that $x$ is conjugate to an element of J not centralized by any component of J.
\end{lem}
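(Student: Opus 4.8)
The two parts are independent; the plan is to reduce each to a statement about the (relative) root system $\Phi$ of $\overline G$ after conjugating $x$ into a standard position, so that is how I would organise things.

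\emph{Part (i).} Being unipotent, $x$ is a $p$-element ($p$ the defining characteristic), and the unipotent radical $U$ of the common Borel subgroup $B$ is a Sylow $p$-subgroup of $G$; so after replacing $x$ by a conjugate I may assume $x\in U$, and $x\ne 1$ (recall $x$ has prime order). Let $\Phi^{+}$ and $\Delta$ be the positive system and simple roots afforded by $B$, let $\gamma_1\ne\gamma_2$ in $\Delta$ be the two simple roots deleted to form $P_1$ and $P_2$, so that $U_i$ is generated by the root subgroups $U_\alpha$ ($\alpha\in\Phi^{+}$) whose root has nonzero coefficient on $\gamma_i$; say that $\alpha$ \emph{occurs in} $y\in U$ if $y$ has nontrivial $U_\alpha$-coordinate. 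If some root occurring in $x$ has zero coefficient on $\gamma_1$ (respectively on $\gamma_2$) then $x\in U\setminus U_1\subseteq P_1\setminus U_1$ (respectively $x\in U\setminus U_2\subseteq P_2\setminus U_2$) and we are done; so assume every root occurring in $x$ has positive coefficient on both $\gamma_1$ and $\gamma_2$. Among the finitely many $G$-conjugates of $x$ lying in $U$, pick $y$ for which the least height $h$ of a root occurring in $y$ is as small as possible, and fix an occurring root $\beta$ with ${\rm ht}(\beta)=h$. I claim $\beta$ is simple. If not, then ${\rm ht}(\beta)\ge 2$; writing $\beta=\sum_i c_i\alpha_i$ over simple roots with $c_i\ge 0$ we get $0<\langle\beta,\beta\rangle=\sum_i c_i\langle\alpha_i,\beta\rangle$, so $\langle\beta,\alpha_i^{\vee}\rangle>0$ for some $i$; since ${\rm ht}(\alpha_i)=1<h$ this $\alpha_i$ does not occur in $y$, so conjugating $y$ by a Weyl representative of $s_{\alpha_i}$ keeps $y$ in $U$ (each occurring root $\alpha\ne\alpha_i$ is carried into $\Phi^{+}$) while the root $s_{\alpha_i}(\beta)=\beta-\langle\beta,\alpha_i^{\vee}\rangle\alpha_i$ now occurs and has height below $h$ — and there is no cancellation in that coordinate, since a commutator correction to it would come from occurring roots (of height $\ge h$) combining positively to $\beta$, which has height $h$. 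This contradicts the choice of $y$. Hence $\beta$ is simple, and since $\gamma_1\ne\gamma_2$ its coefficient on $\gamma_1$ or on $\gamma_2$ is zero, so $y\in U\setminus U_i\subseteq P_i\setminus U_i$ for that $i$; as $y$ is $G$-conjugate to $x$, part (i) follows.

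\emph{Part (ii).} A semisimple element of a parabolic $Q$ is a $p'$-element and the unipotent radical of $Q$ is a $p$-group, so by Schur--Zassenhaus $x$ is conjugate inside $Q$ into a Levi complement, hence into a maximal torus, hence into a Borel subgroup; so I may assume $x\in T\le B$. Set $\Psi=\{\alpha\in\Phi:\alpha(x)=1\}$, the root system of $C_{\overline G}(x)^{\circ}$; since $G\le{\rm Inndiag}(G_0)=\overline G_\sigma$ with $\overline G$ adjoint, $\bigcap_{\alpha}\ker\alpha=Z(\overline G)=1$, and $x\ne 1$, so $\Psi\subsetneq\Phi$. The Dynkin diagram of $\Phi$ is a tree with at least two end nodes (the Lie rank being $\ge 2$); for an end node $\delta$ the set $\Delta\setminus\{\delta\}$ is connected, so the standard Levi $J=L_\delta$ of the maximal parabolic $P_\delta$ has irreducible root system $\Phi_\delta=\mathbb Z(\Delta\setminus\{\delta\})\cap\Phi$, i.e.\ $J$ has a single component. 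If $\Phi_\delta\subseteq\Psi$ for \emph{every} end node $\delta$, then taking two distinct end nodes $\delta_1\ne\delta_2$ gives $\Delta=(\Delta\setminus\{\delta_1\})\cup(\Delta\setminus\{\delta_2\})\subseteq\Psi$, whence $\alpha(x)=\prod_i\alpha_i(x)^{c_i}=1$ for every $\alpha=\sum_i c_i\alpha_i$ in $\Phi$, so $\Psi=\Phi$, a contradiction. Hence some end node $\delta$ has $\Phi_\delta\not\subseteq\Psi$, so some $\alpha\in\Phi_\delta$ has $\alpha(x)\ne 1$. Then $x\in T\le J=L_\delta\le P_\delta$, and the unique component of $J$ does not centralise $x$, since it contains $U_\alpha$ and $x$ acts on $U_\alpha$ through $\alpha(x)\ne 1$. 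Taking $P=P_\delta$ proves (ii).

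The substance is in part (i): part (ii) is essentially immediate once $x$ lies in a torus, whereas the height-descent argument in part (i) is where care is needed — in particular the facts that a Weyl-group representative permutes the root subgroups according to the Weyl action and introduces no cancellation in the coordinate of interest, and the requirement to work throughout with the \emph{relative} root system and relative root subgroups when $G_0$ is twisted. That descent step is the one I expect to be the main obstacle to a fully rigorous write-up.
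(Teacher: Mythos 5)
The paper does not actually prove this lemma; it cites \cite[Lemma 2.2]{GS}, so your write-up is necessarily an independent argument. Your part (i) is essentially sound: reducing to $x\in U=R_u(B)$ via Sylow, observing that an element of $U$ supported at a root with zero $\gamma_i$-coefficient lies in $P_i\setminus U_i$, and then the height-descent via a simple reflection $s_{\alpha_i}$ with $\langle\beta,\alpha_i^\vee\rangle>0$ is a correct way to produce a conjugate supported at a simple root (the no-cancellation point is right, since a commutator correction at $s_{\alpha_i}(\beta)$ would force $\beta$ to be a sum of at least two occurring roots, of height at least $2h>h$). The caveats you flag about relative root systems for twisted groups are real but surmountable.

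Part (ii), however, has a genuine gap at the very first step. From ``$x$ is conjugate into a Levi complement, hence into a maximal torus, hence into a Borel subgroup; so I may assume $x\in T\le B$'' you conclude that $x$ lies in the \emph{split} maximal torus, and your entire character-theoretic argument ($\Psi=\{\alpha:\alpha(x)=1\}$, end nodes, etc.) depends on this. But a semisimple element of a Levi subgroup of a finite group of Lie type lies in \emph{some} maximal torus of that Levi, not necessarily a split one, and need not lie in any Borel subgroup of $G$. Concretely, take $G={\rm PGL}(3,q)$ and $x$ the image of ${\rm diag}(1,A)$ where $A\in GL(2,q)$ is semisimple with eigenvalues $\lambda,\lambda^q$ for $\lambda\in\mathbb{F}_{q^2}\setminus\mathbb{F}_q$: then $x$ is semisimple and lies in the maximal parabolic stabilizing the line $\langle e_1\rangle$, but $x$ fixes no complete flag, hence lies in no Borel subgroup of $G$, and your reduction fails for it. (The conclusion of (ii) is still true for this $x$ --- it is non-central on the $SL_2$ component of that Levi --- but your proof does not reach it.) One cannot repair this by passing to the algebraic group, since a non-split torus containing $x$ does not lie in any $\sigma$-stable Borel, so the end-node parabolics your argument produces need not be subgroups of the finite group. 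A correct argument has to work with the Levi $L$ itself (e.g.\ choosing the parabolic containing $x$ minimally and analysing which components of an overlying end-node Levi can centralize $x$), rather than with characters of the split torus.
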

\begin{proof}
  See \cite[Lemma 2.2]{GS}.
\end{proof}

\begin{thm} \label{2M} Let G be an almost simple group and let $x \in
  G$ with $x \not= 1$.  If $x^G \subseteq M_1 \cup M_2$ for subgroups
  $M_1$ and $M_2$ of $G$ then $G_0$ is contained in $M_i$ for $i=1$ or
  $2$.
\end{thm}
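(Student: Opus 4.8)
The plan is to argue by contradiction via an extremal choice of the two subgroups. Call a pair $(M_1,M_2)$ of subgroups of $G$ \emph{bad} if $x^G\subseteq M_1\cup M_2$ while $G_0\not\le M_1$ and $G_0\not\le M_2$. Assuming the theorem fails, the given pair is bad, and since $G$ has only finitely many subgroups I may fix a bad pair $(M_1,M_2)$ with $|M_1|\cdot|M_2|$ as large as possible. I will then reach a contradiction by producing a strictly larger bad pair.

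First I would record the standard fact that a nontrivial normal subgroup of an almost simple group contains its socle: intersecting with $G_0$ and using simplicity of $G_0$ together with $C_G(G_0)=1$, the intersection is $1$ or $G_0$, and $1$ is impossible. Applying this to $\langle x^G\rangle$ (nontrivial since $x\ne 1$) gives $G_0\le\langle x^G\rangle$. As $G_0\not\le M_2$, this forces $x^G\not\subseteq M_2$, so some conjugate $x'$ of $x$ satisfies $x'\in M_1\setminus M_2$.

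The heart of the argument is a one-line conjugation trick: for every $m\in M_2$ we have $(x')^m\in M_1$, because $(x')^m\in x^G\subseteq M_1\cup M_2$ and if $(x')^m\in M_2$ then $x'=\bigl((x')^m\bigr)^{m^{-1}}\in M_2$, contrary to the choice of $x'$. Hence $N:=\langle (x')^{M_2}\rangle$ is a subgroup of $M_1$, it contains $x'$, and it is normalized by $M_2$. Put $H:=\langle x',M_2\rangle$; then $H=\langle N,M_2\rangle$ normalizes $N$, so $N\trianglelefteq H$, and $|H|>|M_2|$ since $x'\notin M_2$. I claim $G_0\not\le H$: otherwise $N\cap G_0$ is a normal subgroup of $H$ lying in $G_0$, hence normal in $G_0$, hence equal to $1$ or $G_0$; the value $G_0$ would give $G_0\le N\le M_1$, while the value $1$ would give $[N,G_0]\le N\cap G_0=1$ (both being normal in $H$), so $N\le C_G(G_0)=1$, contradicting $1\ne x'\in N$. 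Therefore $(M_1,H)$ is again a bad pair (as $x^G\subseteq M_1\cup M_2\subseteq M_1\cup H$) with $|M_1|\cdot|H|>|M_1|\cdot|M_2|$, contradicting maximality.

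The only non-routine point, and the step I would expect to be the real obstacle, is setting things up correctly: if one fixes $M_1$ and $M_2$ at the outset, the construction of $N\trianglelefteq\langle x',M_2\rangle$ only finishes the proof cleanly when $M_2$ is \emph{maximal} (so that $\langle x',M_2\rangle=G$, whence $N\trianglelefteq G$ and $G_0\le N\le M_1$), and reducing the general case to that one directly is fiddly. Passing instead to an extremal bad pair circumvents this and, pleasantly, keeps the whole proof free of the Classification and of any case analysis on subgroup structure.
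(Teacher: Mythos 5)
Your argument is correct, but note that the paper does not actually prove this statement: it simply cites \cite[Theorem 2.1]{Gu}, so there is no internal proof to compare against. What you have written is a complete, self-contained, and entirely elementary proof. The key steps all check out: the observation that a nontrivial normal subgroup of an almost simple group contains the socle (via $N\cap G_0\trianglelefteq G_0$ and $C_G(G_0)=1$) correctly yields a conjugate $x'\in M_1\setminus M_2$; the conjugation trick shows $(x')^{M_2}\subseteq M_1$, so $N=\langle (x')^{M_2}\rangle\le M_1$ is normalized by $M_2$ and (since $x'\in N$) by $x'$, hence $N\trianglelefteq H=\langle x',M_2\rangle$; and the dichotomy $N\cap G_0\in\{1,G_0\}$ under the assumption $G_0\le H$ leads to a contradiction either way, so $(M_1,H)$ is a strictly larger bad pair. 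Your closing remark is also well taken: the naive version of this argument only closes up immediately when $M_2$ is maximal (forcing $H=G$ and $N\trianglelefteq G$), and the extremal choice of bad pair is exactly the right device to avoid a reduction to maximal subgroups. What your route buys, compared with the paper's citation, is transparency and independence from any external machinery --- no Classification, no case analysis, no counting of fixed points or permutation characters --- and it proves the statement for arbitrary subgroups $M_1,M_2$ rather than just maximal ones, which is the form in which the paper actually uses it.
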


\begin{proof}
  See \cite[Theorem 2.1]{Gu}.
\end{proof}
To begin the proof of Theorem A*, let $(x,G)$ be a
minimal counterexample. Then $G$ is almost simple with socle
$G_0$. If $p \ge 5$ then Theorem A holds for any group
containing fewer elements than $G$.

\section{Alternating Groups}

Suppose that $G_0=A_n$. Then $x$ is contained in $A_n$ since it
has odd order. Firstly, consider the case where $p \ge 5$. The
cycle structure of $x$ will consist only of $p$-cycles. So it
suffices to assume that $x=(12\ldots p)\sigma$ for some $\sigma
\in {\rm Alt}\{p+1,\ldots, n\}$. Observe that if $g:=(123)$ then
$xgx^{-1}g^{-1}=(2p3)$. Thus $\langle x, x^g \rangle$
contains ${\rm Alt}\{1,2,\ldots, p\}$ since a primitive permutation group of degree $p \ge 5$ containing a $3$-cycle contains $A_{p}$ (see \cite[Theorem 13.9]{Wiel} for example). So $(x,G)$ cannot be a
counterexample in this case.\\ 
\indent Now suppose that $p=3$. Then the cycle
structure of $x$ consists of only $3$-cycles. If $x$ is the product of more than one $3$-cycle then it suffices to assume that $G=A_{6}$ and $x$ is the product of two $3$-cycles. But then
$x$ is conjugate to a $3$-cycle in ${\rm Aut}({A_{6}})$. Thus we may assume that $x$ is a $3$-cycle in $A_{5}$ and without loss of generality, that $x=(123)$. If $g:=(1 4 2
5 3)$ then $x^g = (451)$.  Thus, $x x^g = (12345)$ and $\langle x,
x^g \rangle \cong A_5$.



\section{$PSL(n,q)$}

If $G_0 \cong PSL(n,q)$ then it is convenient to treat the cases
where $n=2$ and $n \ge 3$ separately.

\subsection{$G_0 \cong PSL(2,q)$}

Suppose that $x$ is in ${\rm Inndiag}(PSL(2,q)) \cong PGL(2,q)$.
Since $x$ has odd order, it must lie in $PSL(2,q)$.

\subsubsection{$x \in {\rm Inndiag}(PSL(2,q))$ and $p \mid q$}

If $p \mid q$ and $p \ge 5$ then it suffices to assume that $x$ is contained in $PSL(2,p)$. Consider the possibilities for the maximal subgroups of $PSL(2,p)$ containing $\langle x, x^g \rangle$, which are described in \cite[Theorem 6.5.1]{GLS}. By the order of $x$ and since $(x,G)$ is a minimal counterexample, the only type of maximal subgroup possible is a Borel subgroup, $B$. Now since $p \mid q$, $x$ and $x^g$ must lie inside the kernel $K$ of $B$ which is (elementary) abelian. So any $p$-elements lying in a common Borel subgroup must commute. Thus, since there must exist a conjugate of $x$ that does not commute with $x$---otherwise $[x^G,x^G]=1$ and $G_0$ would be abelian---there exists $g \in G$ such that $\langle x , x^g \rangle = PSL(2,p)$, which is not solvable for $p \ge 5$.  \\
\indent If $p=3$ then $q=3^a$, where $a >1$ and since $x \in PSL(2,q)$, it suffices to assume that $G=PSL(2,q)$. Now $A_6 \cong PSL(2,9)$ so let us assume that $q>9$. If $q=3^a$ and $a$ is not prime then there exists a conjugate $x^g$ of $x$ that is contained in a subfield subgroup $H$ with $(x^g,H)$ in ${\mathcal A}$. So $a$ must be an odd prime. Now we may assume that 
\[ x= \left( \begin{matrix} 1 & 1 \\
0 &1
\end{matrix} \right). \]
There are two classes of transvections in $G$, and since $-1$ is not a square in $\mathbb{F}_{q}$, $x$ and $x^{-1}$ are not conjugate. Thus if we let
\[ y:= \left( \begin{matrix} 1 & 0 \\
s &1
\end{matrix} \right)\] 
then $x$ or $x^{-1}$ is conjugate to $y$. So there exists $g \in G$ such that $\left \langle x,x^{g} \right \rangle$ contains 
\[ xy= \left( \begin{matrix} 1+s & 1 \\
s&1
\end{matrix} \right), \]
which is semisimple and has trace $s+2$. In particular we can choose $s$ so that $xy$ has order $\tfrac{q+1}{2}$ and an inspection of the maximal subgroups of $G$ shows that $\left \langle x,x^{g} \right \rangle=G$.

\subsubsection{$x \in {\rm Inndiag}(PSL(2,q))$ and $p \nmid q$}

Suppose now that $p\nmid q$. Then either $p \mid q-1$ or $p \mid q+1$. If
$p \mid q-1$ then $x$ is contained in a split torus. Examining the
character table of $PSL(2,q)$ shows that for an element $z$ of
order $(q+1)/(2,q-1)$,
\begin{displaymath}
    \sum_{\chi \in {\rm Irr}(G_0)} \frac{\overline{\chi(z)}|\chi(x)|^2}{\chi(1)}
\not=0
\end{displaymath}
so there exists $g \in G$ such that $[x,g]$ has order $(q+1)/(2,q-1)$. That is $[x,g]$ generates a non-split torus. It follows that $\langle x,x^g\rangle$ generates $PSL(2,q)$. Indeed, $\langle x,x^g\rangle$ contains an irreducible torus, and it also contains $x$, which does not normalize this torus. An inspection of the maximal subgroups of $PSL(2,q)$ yields that $\langle x,x^g\rangle$ must generate the whole group for $q \ge 11$, and $q=8$. It suffices to assume that $q \not= 4$, $5$ or $9$ since in those cases $G$ is isomorphic to an alternating group. When $q=7$, the normalizer of a non-split torus is not maximal, but is contained in subgroups isomorphic to $S_4$. However, since $p \mid q-1$, $\langle x,x^g\rangle$ cannot be contained in $S_4$ since $S_4$ does not contain two elements of order $3$ whose product has order $4$. \\
\indent If $p \mid q+1$ then the character table implies that there
exists $g \in G$ such that $[x,g]$ has order $(q-1)/(2,q-1)$. Thus
$\langle x,x^g\rangle$ contains a split torus, and since $p \mid q+1$,
it acts irreducibly. Therefore, an inspection of the maximal
subgroups shows that for $q \ge 13$ and $q=8$, $\langle
x,x^g\rangle=PSL(2,q)$. Again, the cases when $q=4$, $5$, and $9$
do not concern us. Also note that $q\not=7$ since $p \mid q+1$. If
$q=11$ then $\langle x,x^g\rangle$ contains a maximal split torus
and acts irreducibly. The list of maximal subgroups then implies
that either $\langle x,x^g\rangle=PSL(2,q)$ or $A_5$. There are no
other possibilities for $x \in {\rm Inndiag}({\rm PSL}(2,q))$.

\subsubsection{$x$ an outer automorphism of $PSL(2,q)$}

Suppose that $x$ is not contained in $\rm{Inndiag}(G_0)$. Then by
\cite[7.2]{GL}, and since $x$ has odd order, there exists an
element $g \in PGL(2,q)$ such that $x^g$ is a standard field
automorphism. So it suffices to assume that $x$ is a standard
field automorphism by Lemma \ref{Autl}, and moreover, that
$G= \langle G_0,x \rangle$. Write
$q=q_1^{p}$ and consider the set $\Gamma =\{ y \in x^{G_0} |
\langle x,y \rangle \not= G \}$. The aim is to bound the
cardinality of $\Gamma$ and show that this is smaller than
$|x^{G_0}|$. Now if $y \in \Gamma$ then consider the possibilities
for subgroups $H$ of $G_0$ containing $\langle x,y\rangle \cap
G_0$. Observe that $\langle x,y\rangle \cap G_0$ cannot be
dihedral. Indeed, since a dihedral group has a characteristic
cyclic subgroup of index $2$, $K$ say, $K$ would be normal in
$\langle x,y\rangle$. Now $\langle x,y\rangle/K$ has a normal
subgroup of  order $2$ and a subgroup of order $p$, which is
normal since it has index $2$. So, $\langle x,y\rangle/K$ is
abelian of order $2p$, but this is impossible since it is
generated by two elements of order $p$. Thus, it suffices to
assume that $H$ is a Borel subgroup, a cyclic group of order
$(q+1)/(2,q-1)$, or a subfield subgroup. Since $p$ is odd any
$A_5$ or $S_4$ will be contained in a subfield subgroup and any
cyclic group of order $(q-1)/(2,q-1)$ will be contained in a Borel
subgroup. Now let $H$ be a Borel, non-split torus or subfield
subgroup of the form $L(2,q^{1/r})$, where $r$ is a prime distinct
from $p$. Observe that we may assume that there are no subfield
subgroups of the form $L(2,q^{1/r})$, ($r \not= p$) since some
conjugate of $x$ will be a non-trivial field automorphism of the
simple subgroup $L(2,q^{1/r})$ contradicting the minimality of
$(x,G)$. Now the conjugates of $H$ fixed by $x$ form one
$C_{G_0}(x)$ orbit.  This follows from the fact that any two
conjugates of $x$ in $H \langle x \rangle$ are in fact conjugate
by an element of $H$, which is a consequence of Lang's Theorem
(see \cite[7.2]{GL}). So if $k$ is the number of conjugates of $H$
fixed by $x$ then
\begin{displaymath}
  \begin{split}
    |\{ y \in x^{G_0} : \langle x,y \rangle \cap G_0 \textrm{ is
      contained in a conjugate of $H$} \}| & \le |x^H|.k \\ & =
    \frac{|H||C_{G_0}(x)|}{|C_H(x)|^2}.
  \end{split}
\end{displaymath}
Moreover $x$ does not fix any non-trivial conjugate of $C_{G_0}(x)
= PSL(2,q^{1/p})$, so
\begin{displaymath}
  |\{ y \in x^{G_0}  :  \langle x,y \rangle \cap G_0
  \textrm{ is contained in some conjugate of } C_{G_0}(x) \}| \le
  |C_{G_0}(x)|.
\end{displaymath}
Therefore if the representatives for the conjugacy classes of the
subgroups above are denoted by $H_1$, $\ldots$ , $H_m$,
$H_{m+1}:=C_{G_0}(x)$, then
\begin{displaymath}
\begin{split}
  |\Gamma|  &= |\{ y \in x^{G_0} : G_0 \cap \langle x,y \rangle
  \textrm{ is contained in some conjugate of some $H_i$}
  \} |\\
  & \le |C_{G_0}(x)| + \sum_{i=1}^m |H_i||C_{G_0}(x)|/|C_{H_i}(x)|^2  .
\end{split}
\end{displaymath}
If $q_0:=q^{1/p}$ then
\begin{displaymath}
\begin{split}
  |H||C_{G_0}(x)|/|C_H(x)|^2 &= (q_0^p+1)q_0(q_0^{2}-1)/(q_0+1)^2 \\
  &= (q_0^p+1)q_0(q_0-1)/(q_0+1)
\end{split}
\end{displaymath}
when $H$ is a non-split torus. Similarly if $H$ is a Borel
subgroup then
\begin{displaymath}
  |H||C_{G_0}(x)|/|C_H(x)|^2 = q_0^p(q_0^p-1)(q_0+1)/q_0(q_0-1)
\end{displaymath}
So,
\begin{displaymath}
  |\Gamma| \le q_0(q_0^{2}-1) +
  \frac{q_0^p(q_0^p-1)(q_0+1)}{q_0(q_0-1)} +
  \frac{(q_0^p+1)q_0(q_0-1)}{(q_0+1)}
\end{displaymath}
However, $|x^{G_0}| = |G_0|/|C_{G_0}(x)| =
\frac{q_0^p(q_0^{2p}-1)}{q_0(q_0^{2}-1)}$ and $q \ge 8$ so it
follows that $|x^{G_0}| > |\Gamma|$ as required. Thus, if $x$ is
an outer automorphism of $PSL(2,q)$ then $(x,G)$ cannot be a
minimal counterexample.

\section{Outer Automorphisms}

If $(x,G)$ is a minimal counterexample and $x$ is an outer
automorphism of $G_0$ then the work for $G_0=PSL(2,q)$ allows a
considerable narrowing of the possibilities for $G_0$. This is
demonstrated in Lemma \ref{Outer} below.
\begin{lem} \label{Outer} If $x$ is an outer automorphism of $G_0$ that is not inner-diagonal 
  and $(x,G)$ is a minimal counterexample then $G_0$ is a Suzuki--Ree
  group.
\end{lem}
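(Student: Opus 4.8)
The plan is to work through the possible types of outer automorphism. Since the outer automorphism groups of the alternating and sporadic groups have $2$-power order, a finite simple group possessing an outer automorphism of odd prime order must be of Lie type; so $G_0$ is a group of Lie type. Using the standard description of ${\rm Out}(G_0)$ as an extension of the group of diagonal automorphisms by a product of field and graph automorphisms, together with the classification of ${\rm Aut}(G_0)$-classes of automorphisms of prime order, and Lemma \ref{as}(b) (which allows us to replace $x$ by any ${\rm Aut}(G_0)$-conjugate), I would first reduce to the case where $x$ is a standard field, graph, or graph-field automorphism of order $p$. Since $p$ is odd, a graph or graph-field automorphism can occur only when $G_0$ has type $D_4$ and $p=3$; that is, when $G_0 \cong P\Omega^+(8,q)$ and $x$ induces a triality (possibly composed with a field automorphism).

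The field automorphism case is then handled by producing, for $G_0$ neither $PSL(2,q)$ nor a Suzuki--Ree group, a proper $x$-invariant non-solvable subgroup $L$ of $G_0$ on which $x$ induces a nontrivial outer automorphism of field type of order $p$; for $G_0$ of rank at least two one may take $L\cong SL(2,q)$ with $q\ge 8$, the derived subgroup of a Levi complement of a minimal $x$-invariant parabolic, and otherwise a suitable $x$-invariant classical subgroup over $\mathbb{F}_q$ or a subfield subgroup $\Sigma(q^{1/r})$ with $r\ne p$. Given such an $L$, set $H:=\langle L,x\rangle$: then $H/O_{\infty}(H)$ is almost simple with socle $L/Z(L)$, $x\notin O_{\infty}(H)$, and $(x,H/O_{\infty}(H))$ is \emph{not} one of the configurations in Table \ref{exceptions} (whose elements are all inner unipotent elements, whereas $x$ restricts to an outer automorphism of $L/Z(L)$); hence $(x,H)\in{\mathcal A}$, and Lemma \ref{as}(c) shows that $(x,G)$ is not a minimal counterexample --- a contradiction. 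The construction of $L$ genuinely fails only when $G_0\cong PSL(2,q)$, which has already been treated, and for the Suzuki--Ree groups --- for example every proper subgroup of ${}^2B_2(2^p)$ is solvable, so no suitable $L$ exists. Thus if $x$ is a field automorphism then $G_0$ is a Suzuki--Ree group.

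For the triality case ($G_0\cong P\Omega^+(8,q)$, $p=3$) the key observation is that $x$ normalizes the parabolic subgroup of $G_0$ whose Levi complement has semisimple part of type $A_1\times A_1\times A_1$ (the one corresponding to the three outer nodes of the $D_4$ diagram); the derived subgroup $K$ of this Levi satisfies $K/O_{\infty}(K)\cong PSL(2,q)^3$, and $x$ cyclically permutes the three simple factors (possibly after a field twist). Passing to the section $\overline K:=K/O_{\infty}(K)$ of $G_0$ and applying (the proof of) Lemma \ref{reduction}(a) with $L$ one of the three factors --- note that $x\notin N_{\langle\overline K,x\rangle}(L)$ and $x^2\notin C_{\langle\overline K,x\rangle}(L)$, since $x$ permutes the factors transitively --- produces an element $g\in G_0$ with $\langle x,x^g\rangle$ not solvable, contradicting minimality. (When $q\in\{2,3\}$, so that $PSL(2,q)$ is solvable, the two groups $P\Omega^+(8,2)$ and $P\Omega^+(8,3)$ have to be dealt with separately, for instance by Lemma \ref{count}.) Combining the two cases, $G_0$ must be a Suzuki--Ree group.

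The step I expect to be the main obstacle is the verification, type by type, that in the field automorphism case a non-solvable Levi (or other $x$-invariant) subgroup $L$ really exists for every group of Lie type apart from $PSL(2,q)$ and the Suzuki--Ree groups, with $x$ inducing on $L$ a nontrivial outer automorphism of field type and with the pair $(x,L/Z(L))$ avoiding Table \ref{exceptions}; this involves running through all the Lie types, including the twisted ones (where the relevant rank-one Levi subgroups can be $SL(2,q^2)$, $SL(2,q^3)$ or $SU(3,q)$ rather than $SL(2,q)$), and also carefully reducing an arbitrary outer, non-inner-diagonal element of order $p$ to a genuine standard automorphism. By contrast, once one has observed that $x$ permutes the three $A_1$-factors of the $D_4$-Levi, the triality case is short given Lemma \ref{reduction}.
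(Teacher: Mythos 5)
Your treatment of the field automorphism case is essentially the paper's: after reducing to a standard field automorphism, the paper simply observes (citing GLS, Theorem 3.2.8) that $x$ acts non-trivially as a field automorphism on a fundamental $SL_2$-subgroup unless $G_0$ is Suzuki--Ree, which is your construction of $L$ in its cleanest form. However, there is a genuine gap in your case division: you assert that a graph or graph-field automorphism of odd prime order occurs only for $G_0 \cong P\Omega^+(8,q)$, and this omits $G_0 \cong {}^3D_4(q)$. The outer automorphism group of ${}^3D_4(q)$, $q=q_0^f$, is cyclic of order $3f$, and its unique subgroup of order $3$ is generated by an element acting as a triality (in the fixed-point realization, the $f$-th power of the generating field morphism acts as $\gamma^{-1}$ modulo inner automorphisms); these are graph automorphisms, not field automorphisms, so your field-automorphism argument does not apply to them, and your triality argument is specific to the split $D_4$. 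The paper devotes a separate argument to this case, using the classification of such classes in \cite[9.1]{GL} to reduce to the representatives $\gamma$, $g\gamma$ ($3\nmid q$) or $\gamma$, $x_{\beta}(1)\gamma$ ($3\mid q$), and then showing each normalizes the maximal parabolic attached to the $\gamma$-invariant fundamental root and acts non-trivially on its Levi complement. Without this case the lemma is not proved.

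For the split $D_4(q)$ triality case your route is genuinely different from the paper's and is workable, but it carries extra burdens you have only gestured at. The paper argues via $G_2(q)$: every order-$3$ element of the triality coset either centralizes a $G_2(q)$ (and then, since $G_2(q)$ does not contain a Sylow $3$-subgroup, $x$ normalizes a second conjugate of $G_2(q)$ on which it acts non-trivially) or normalizes without centralizing one; this disposes of all classes uniformly, including $q=2,3$. Your route through the $A_1(q)^3$ parabolic and Lemma~\ref{reduction}(a) requires you to show that \emph{every} order-$3$ element of the coset ${\rm Inndiag}(G_0)\gamma$ (not just the standard $\gamma$ or $\gamma\phi$) can be conjugated so as to normalize a parabolic of type $\{1,3,4\}$ --- this needs a Lang--Steinberg argument, which you do not supply --- and it degenerates when $q\in\{2,3\}$ because $PSL(2,q)$ is then solvable, forcing the separate computations you acknowledge. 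So: fix the ${}^3D_4(q)$ omission, and either supply the stability argument for the $A_1^3$ parabolic or switch to the $G_2(q)$ argument.
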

\begin{proof}
Since $x\not\in \rm{Inndiag}(G_0)$ and $x$ has odd prime
order, either $x$ is a field automorphism or, $G_0 \cong D_4(q)$
or ${^3}D_4(q)$ and $x$ is a graph or graph-field automorphism.
Since the case where $G_0 \cong PSL(2,q)$ has already been
eliminated the Lie rank is at least $2$. If $x$ is a field
automorphism then by \cite[7.2]{GL} and Lemma \ref{Autl} it
suffices to assume that $x$ is a standard field automorphism. So
if $G_0$ is not a Suzuki--Ree group then $x$ will act non-trivially
as a field automorphism on a fundamental $SL_2$-subgroup, by
\cite[Theorem 3.2.8]{GLS}. So $(x,G)$ cannot be a minimal
counterexample. \\
\indent If $G_0 \cong {^3}D_4(q)$ and $x$ is a graph automorphism
of order $3$ then \cite[9.1]{GL} describes the conjugacy classes
of such elements.  Let $\gamma$ be the standard triality
automorphism and $g = \overline{h_{\beta_0}(\omega)}$ where
$\omega$ is a primitive cube root of unity and $\beta_0$ is the
$\gamma$ invariant fundamental root. Thus, if $3 \nmid q$ then it
suffices to assume that $x$ is either $\gamma$ or $g \gamma$.
Also, if $3 \mid q$ then it suffices to assume that $x$ is either
$\gamma$ or $x_{\beta}(1) \gamma$ where $\beta$ is the highest
root. In all cases, $x$ normalizes the maximal parabolic
corresponding to $\beta_0$. Moreover $x$ acts non-trivially on the
Levi complement in all these cases and so $(x,G)$ cannot be a
minimal counterexample. The only case left is where $G_0 \cong
D_4(q)$ and $x$ is a graph or field-graph automorphism of order
$3$. In which case, using \cite{GL}, it suffices to assume that
$x$ is either the standard triality (and $C_{G_0}(x)=G_2(q)$) or
it normalizes but does not centralize a subgroup isomorphic to
$G_2(q)$. In the latter case $x$ induces a non-trivial
automorphism on $G_2(q)$, so $(x,G)$ cannot be a minimal
counterexample. In the former case, since $G_2(q)$ does not
contain a Sylow $3$-subgroup, $x$ normalizes more than one
conjugate of $G_2(q)$. Since it only centralizes one $G_2(q)$
subgroup, it follows that $x$ induces a non-trivial automorphism
on some subgroup isomorphic to $G_2(q)$ and so $(x,G)$ cannot be a
minimal counterexample.
\end{proof}

\section{$PSL(n,q)$, $n\ge3$}

\subsection{$x \in PGL(n,q)$, $p \nmid q$, $n \ge 3$}

Now suppose that $(x,G)$ is a minimal counterexample with
$G_0=PSL(n,q)$ and $n \ge 3$.

\begin{lem}
  For $n \geq 3$, if one can lift $x$ to an element of order $p$
  in $GL(n,q)$ and $x$ does not act irreducibly then $(x,G)$
  cannot be a minimal counterexample.
\end{lem}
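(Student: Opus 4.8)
The plan is to use the reducibility of a suitable lift of $x$ to embed $x$ into a Levi complement of a maximal parabolic subgroup of $G$, extract from it a quasisimple linear subgroup $L \cong SL(a,q)$ with $a < n$ on which $x$ acts non-trivially, and then to apply Lemma~\ref{as}(c) to $H := \langle L,x\rangle$. First I would set up: by Lemma~\ref{lift}(b) (recall $p \nmid q$ here) and Lemma~\ref{as}(a) it suffices to take $G = {\rm Inndiag}(G_0) = PGL(n,q)$, and by hypothesis $x$ lifts to an element $\hat{x} \in GL(n,q)$ of order $p$, which is semisimple since $p \nmid q$. Since $x \ne 1$ in $G_0$ the element $\hat{x}$ is not scalar, and since $\hat{x}$ does not act irreducibly on the natural module $V = \mathbb{F}_q^n$ it preserves a proper non-zero subspace; being semisimple it preserves a decomposition $V = W \oplus W'$ into proper non-zero invariant subspaces, so $\hat{x}$ lies in the Levi complement $J \sim S(GL(W)\times GL(W'))$ of the maximal parabolic stabilising $W$. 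Equivalently one may invoke Lemma~\ref{2.2}(ii), whose hypotheses hold because $x$ is semisimple, lies in a parabolic, and $G_0$ has Lie rank $n-1 \ge 2$: this produces a maximal parabolic $P$ with Levi $J$ such that, after conjugating, $x \in J$ and $x$ is not centralised by any component of $J$. The components of $J$ are the groups $SL(a,q)$ attached to the blocks of dimension $a \ge 2$.

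Next I would choose a component $L \cong SL(a,q)$ of $J$ with $x \notin C_G(L)$, arranging where possible that $L$ is quasisimple, i.e. $(a,q) \notin \{(2,2),(2,3)\}$. If $q \ge 4$ a two-dimensional invariant block on which $\hat{x}$ is non-scalar already works; if $q \in \{2,3\}$ and $n \ge 4$ one checks, splitting into cases according to the eigenvalue multiplicities of the non-scalar element $\hat{x}$, that there is always a three-dimensional invariant block on which $\hat{x}$ is non-scalar, giving $L \cong SL(3,q)$, which is quasisimple for every $q$. Put $H := \langle L,x\rangle$. Since $x \in J$ normalises the component $L$ we have $L \trianglelefteq H$ and $H = L\langle x\rangle$ with $H/L$ cyclic of order dividing $p$; hence $C_H(L) \le O_{\infty}(H)$ and $L \cap O_{\infty}(H) \le Z(L)$. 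If $x \in O_{\infty}(H)$ then, $L$ being normal in $H$, the map $\ell \mapsto [\ell,x]$ has image in $[L,O_{\infty}(H)] \le L \cap O_{\infty}(H) \le Z(L)$, so it is a homomorphism $L \to Z(L)$; as $L$ is perfect this homomorphism is trivial, whence $x \in C_H(L) \le C_G(L)$, contrary to the choice of $L$. Therefore $x \notin O_{\infty}(H)$, so $H/O_{\infty}(H)$ is almost simple with socle $PSL(a,q)$.

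It then remains to observe that $(x, H/O_{\infty}(H))$ is not an entry of Table~\ref{exceptions}. The only linear entry there is $PSL(n,3)$ with $x$ a transvection, which forces $q = 3$ and hence $p \ne 3$, i.e. $p \ge 5$, whereas every $x$ occurring in the table has order $3$; the small exceptional isomorphisms, for instance $PSL(3,2) \cong PSL(2,7)$, yield no additional table entries, as one sees on comparing group orders. Thus $(x,H) \in {\mathcal A}$, and since $a < n$ gives $|PSL(a,q)| < |G_0|$, Lemma~\ref{as}(c) shows that $(x,G)$ cannot be a minimal counterexample.

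Finally I would dispose of the finitely many groups not covered by the argument above, namely $G_0 \cong PSL(3,q)$ with $q \in \{2,3\}$, where the only available component is $SL(2,q)$ and is not quasisimple. For $PSL(3,2) \cong PSL(2,7)$ the conclusion is already contained in the analysis of $PSL(2,q)$; for $PSL(3,3)$ the only odd prime distinct from $3$ dividing $|G_0|$ is $13$, and an element of order $13$ acts irreducibly on $\mathbb{F}_3^3$, so no reducible $x$ of the kind in question exists, and the statement is vacuous. The main obstacle throughout is precisely this bookkeeping with small fields and small ranks: ensuring that the component $L$ pulled out of the Levi is genuinely quasisimple, and that the resulting pair $(x, H/O_{\infty}(H))$ evades Table~\ref{exceptions}, which is where $q \in \{2,3\}$ and small $n$ have to be isolated and treated directly.
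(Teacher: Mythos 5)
Your overall strategy is the same as the paper's in all but presentation: both arguments exploit the complete reducibility of the semisimple lift $\hat{x}$ to place $x$ non-centrally inside a smaller linear group and then invoke minimality via membership in $\mathcal{A}$. (The paper restricts $x$ to an $e$-dimensional irreducible invariant subspace $U$, where $e$ is the order of $q$ modulo $p$, and works with $(x_U,GL(e,q))$; you package the same information as a component of a Levi complement. Your verification that $x\notin O_{\infty}(\langle L,x\rangle)$ via the commutator homomorphism into $Z(L)$ is fine, and your check against Table~\ref{exceptions} is correct.)

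There is, however, a genuine gap in your treatment of $q=2$. You assert that for $q\in\{2,3\}$ and $n\ge 4$ there is always a three-dimensional invariant block on which $\hat{x}$ is non-scalar. This is false when $q=2$ and $p=3$ (so $e=2$) and $\hat{x}$ acts fixed-point-freely: then $V$ is a direct sum of pairwise isomorphic $2$-dimensional irreducible $\mathbb{F}_2\langle\hat{x}\rangle$-modules (there is only one non-trivial irreducible $\mathbb{F}_2[C_3]$-module), so \emph{every} invariant subspace of $V$ is even-dimensional and no $3$-dimensional block exists. For $n\ge 6$ your argument can be repaired by taking instead the component $SL(n-2,2)$ of the Levi of the stabilizer of one $2$-dimensional constituent, but for $n=4$ the method fails outright: every Levi component available is $SL(2,2)\cong S_3$, which is solvable, so no quasisimple $L$ can be extracted. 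Your closing paragraph lists only $PSL(3,2)$ and $PSL(3,3)$ as leftover cases and therefore misses $G_0=PSL(4,2)$ with $x$ of order $3$ acting fixed-point-freely. The paper disposes of exactly this case by reducing to $PSL(4,2)\cong A_8$, which was already handled in the alternating group section; you need to add that step (or an equivalent one) for your proof to be complete. As a minor point, your parenthetical worry about $q=3$ is vacuous: for $q=3$ and $p$ an odd prime with $p\nmid q$ one has $p\nmid q^2-1=8$, so $e\ge 3$ and the irreducible block itself already yields a quasisimple $SL(e,3)$.
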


\begin{proof}
  Suppose that one can lift $x$ to an element of $GL(n,q)$ order
  $p$. Now the minimal polynomial $m_x(t)$ of $x$ divides $(t^p -
  1)$ so suppose that $(t^p - 1)/(t-1)$ factors into irreducibles
  $g_1(t) \ldots g_k(t)$.  Then each non-linear $g_i(t)$ is the
  minimal polynomial of some primitive $p$th root of unity
  $\zeta_p$. Thus
  \begin{displaymath}
    \rm{deg} \; g_i (t) =
    [\mathbb{F}_q(\zeta_p) : \mathbb{F}_q].
  \end{displaymath}
  But $\mathbb{F}_q(\zeta_p)$ is just the finite field of $q^e$
elements where $e$ is the smallest positive integer such that $p \mid 
q^e-1$. So all of the $g_i(t)$'s have degree $e$.  Now $m_x(t)$ is
a product of some $g_i(t)$'s and possibly $t-1$. By considering
the rational canonical form of $x$, it is clear that there is an
$e$-dimensional subspace $U$ of $V$ on which $x$ acts invariantly,
non-trivially and irreducibly. If $2 \le e < n$ then consider the
induced transformation of $U$, $x_U$ so that $x_U \in GL(e,q)$.
Now observe that if $(e,q) \not=(2,2)$ or $(2,3)$ then
$(x_U,GL(e,q)) \in {\mathcal A}$. If $(e,q)=(2,3)$ then $p$ would
be $2$. So the only case of concern is $(e,q)=(2,2)$ and then for
$n \ge 4$ one can just reduce to the case where $G_0= PSL(4,2)$.
However $PSL(4,2) \cong A_8$ and $PSL(3,2)\cong PSL(2,7)$, which
have already been eliminated.  If $e=1$ then since $p \mid q-1$, $q \ge
4$. Now $x$ will act non trivially on a 2 dimensional subspace
$U^{\prime}$; thus $x_{U^{\prime}} \in GL(2,q)$ and
$(x_{U^{\prime}},GL(2,q)) \in {\mathcal A}$. So unless $e=n \ge
3$, $(x,G)$ cannot be a minimal counterexample.
\end{proof}
Now observe that the proof above shows that if $(x,PGL(n,q))$ is a
minimal counterexample and $x$ lifts to an element of order $p$ in
$GL(n,q)$ then $p$ is a primitive prime divisor of $q^n-1$ and $x$
acts irreducibly. Also, if $x$ acts irreducibly and $n$ is not
prime then some conjugate of $x$ is contained in a field extension
subgroup $PGL(\frac{n}{r},q^r)$. Thus, if $(x,PGL(n,q))$ is a
minimal counterexample then $n$ is prime.\\
\indent The results in \cite{GPPS} state that any subgroup of
$GL(n,q)$ which has order divisible by a primitive prime divisor
of $q^e-1$ must be one of nine types ($2.1$--$2.9$). The results
of \cite{GPPS} will be used frequently, and are summarized in
Table \ref{table:GPPS}. The notation of \cite{GPPS} will be used.
Namely, that the element of $GL(d,q)$ that is a primitive prime
divisor of $q^e-1$ be referred to as a ppd($d$,$q$,$e$)-element.
The only elements that are of interest are
ppd($n$,$q$,$n$)-elements where $n$ is (an odd) prime. So what are
the possibilities for a maximal subgroup $M$ of $GL(n,q)$
containing $x$?
\begin{table}
\begin{center}
  \begin{tabular}{|l|l|l|}
    \hline
    Type & Rough description&  Conditions on $d,q,e$ \\
    \hline
    Classical  ($2.1$(a)) & $SL(d,q_0) \unlhd M$ & $p$ a ppd($q_0$,$d$,$e$)-element \\
    Classical  ($2.1$(b))& $Sp(d,q_0) \unlhd M$ & $d$, $e$ both even;\\
    && $p$ a ppd($q_0$,$d$,$e$)-element\\
    Classical  ($2.1$(c))& $SU(d,q_0) \unlhd M$ & $q_0$ a square; $e$ odd; \\
    && $p$ a ppd($q_0$,$d$,$e$)-element\\
    Classical ($2.1$(d))& $\Omega^{\epsilon}(d,q_0) \unlhd M$ & $\epsilon=\pm$
    when $d$ even; \\
    &&  $\epsilon= 0$ when $dq$ is odd;\\
    && $e$ even;\\
    && $p$ a
    ppd($q_0$,$d$,$e$)-element\\
    Reducible  ($2.2$)& $M$ reducible & × \\
    Imprimitive ($2.3$)& $M \le GL(1,q)\; \rm{wr} \;S_d$ & $p=e+1 \le d$
    \\
    Extension Field  ($2.4$(a)) & $M \le GL(1,q^d).d$ & $p=d=e+1$\\
    Extension Field  ($2.4$(b)) & $M \le GL(d/b,q^b).b$ & $b \mid  {\rm gcd}(d,e)$\\
    Symplectic type  ($2.5$) & & $d=2^a$; \\
    && $q$ odd not a square;\\
    && $p=d+1=e+1$ or \\
    && $p=d-1=e+1$\\
    Nearly simple ($2.6$--$2.9$) & $M/(M \cap Z)$ simple & Possibilities listed \\
    && in tables in \cite{GPPS} \\
    \hline
  \end{tabular}
\vspace{+2mm} \caption{\label{table:GPPS} Summary of descriptions
in \cite{GPPS} of subgroup types containing
ppd($d$,$q$,$e$)-elements }
\end{center}
\end{table}

\begin{lem} \label{2.6}
Suppose that $x$ is a ppd($n$,$q$,$n$)-element contained in a
subgroup $M$ of $G$, where $G$ is a classical group of dimension
$n \ge 3$ and $(x,G)$ is a minimal counterexample. Then $p \ge 5$
and $M$ cannot be of type $2.2$, $2.3$, $2.4$(a), $2.6$, $2.7$,
$2.8$, or $2.9$.
\end{lem}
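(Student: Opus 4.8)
The plan is to establish the two assertions separately, handling all four nearly simple types at once rather than searching the tables of \cite{GPPS}. For the bound on $p$: since $x$ has prime order and is a ppd($n$,$q$,$n$)-element, $p := |x|$ is a primitive prime divisor of $q^n - 1$, so the multiplicative order of $q$ modulo $p$ is exactly $n$, and this order divides $p - 1$; hence $n \mid p - 1$. As $n \ge 3$, neither $p = 2$ (which would give $n \mid 1$) nor $p = 3$ (which would give $n \mid 2$) is possible, so $p \ge 5$, indeed $p \ge 2n + 1$ since $n + 1$ is even. I would also record here the basic fact, used below, that $x$ acts irreducibly on the natural module $V = \mathbb{F}_q^n$: as $p \nmid q$ and $x \ne 1$, $x$ is semisimple with an eigenvalue $\zeta$ over $\overline{\mathbb{F}}_q$ that is a primitive $p$-th root of unity, and $[\mathbb{F}_q(\zeta) : \mathbb{F}_q] = n$ because $p$ is a ppd of $q^n - 1$; so the minimal polynomial of $\zeta$ over $\mathbb{F}_q$, a degree-$n$ divisor of the degree-$n$ characteristic polynomial of $x$, equals it, and an element with irreducible characteristic polynomial acts irreducibly. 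Since every element of a reducible subgroup stabilises a proper nonzero subspace, it follows at once that $M$ cannot be of type $2.2$.

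For types $2.3$ and $2.4$(a) it suffices to substitute $d = e = n$ into the conditions recorded in Table \ref{table:GPPS}: type $2.3$ requires $p = e + 1 \le d$, i.e.\ $p = n + 1$ together with $n + 1 \le n$, which is absurd; type $2.4$(a) requires $p = d = e + 1$, i.e.\ $n = n + 1$, again absurd. Intrinsically: in type $2.3$, $x \in GL_1(q) \wr S_n$ and irreducibility forces the image of $x$ in $S_n$ to be an $n$-cycle, so $n \mid p$ and hence $p = n$; in type $2.4$(a), $x \in GL_1(q^n).n$ projects nontrivially onto the cyclic quotient of order $n$, so $p \mid n$; both contradict $n \mid p - 1$. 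Note that type $2.4$(b) --- with $x$ inside the Singer torus $GL_1(q^n)$ --- as well as the classical types $2.1$ and the symplectic type $2.5$ are not excluded by the lemma and are treated later.

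For the nearly simple types $2.6$--$2.9$ I would argue uniformly, without consulting the GPPS tables. If $M$ is of one of these types then, writing $Z$ for the group of scalars, $M/(M \cap Z)$ is a nonabelian simple group by Table \ref{table:GPPS}; such a group has no nontrivial solvable normal subgroup, so $O_\infty(M) \le M \cap Z$, and as $M \cap Z$ is itself an abelian normal subgroup of $M$ we get $O_\infty(M) = M \cap Z$, whence $M/O_\infty(M) \cong M/(M \cap Z)$ is almost simple. Moreover $x \notin O_\infty(M)$, since $x$ is not scalar --- it acts irreducibly on $V$ and $n \ge 3$ (equivalently, $p \nmid q - 1$) --- while $O_\infty(M)$ consists of scalars; and $(x, M/O_\infty(M))$ is not one of the pairs in Table \ref{exceptions}, every entry of which has $p = 3$ whereas $p \ge 5$ here. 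Therefore $(x, M) \in \mathcal{A}$, and since $M$ is a proper subgroup of $G$, Lemma \ref{as}(c) shows that $(x, G)$ is not a minimal counterexample to Theorem A*, contradicting the hypothesis; this excludes types $2.6$--$2.9$. I expect no real obstacle here: the only point needing care is that the quotient is \emph{simple}, not merely almost simple, modulo scalars --- which is exactly what Table \ref{table:GPPS} records for these types and is what makes $O_\infty(M)$ central and the argument uniform.
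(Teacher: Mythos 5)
Your proof is correct. For the bound $p\ge 5$ and for types $2.2$, $2.3$, $2.4$(a) it is essentially the paper's argument in different clothing: the order of $q$ modulo $p$ equals $n$ and divides $p-1$ (the paper phrases the same fact via Fermat's little theorem for $p=3$), irreducibility rules out $2.2$, and substituting $d=e=n$ into the numerical conditions of \cite{GPPS} rules out $2.3$ and $2.4$(a). Where you genuinely diverge is in the nearly simple cases $2.6$--$2.9$: the paper returns to the tables of \cite{GPPS} and checks type by type which socles can actually occur (alternating or sporadic for $2.6$--$2.7$, $G_2(q_1)$ and ${}^2B_2(q_1)$ with $q_0=2$ for $2.8$, the entries of Tables 7 and 8 for $2.9$), verifying in each case that the pair lies in ${\mathcal A}$. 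You bypass that entirely by observing that conditions (i)--(iii) of ${\mathcal A}$ follow formally from ``almost simple modulo scalars'' together with the non-scalarity of an irreducible element in dimension $n\ge 3$, and that condition (iv) is automatic because every entry of Table \ref{exceptions} has $p=3$ while you have already shown $p\ge 5$. This is a legitimate and cleaner route; the paper's table inspection is in effect redundant once $p\ge5$ is in hand. Two small corrections, neither of which damages the proof. First, your aside that $p\ge 2n+1$ ``since $n+1$ is even'' fails when $n$ is even, which does occur when the lemma is invoked for $Sp(n,q)$ and $O^-(n,q)$ (e.g.\ $n=4$, $q=2$, $p=5$); it is unused. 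Second, your closing remark has the emphasis backwards: the argument does not need $M/(M\cap Z)$ to be simple rather than almost simple --- an almost simple group has trivial solvable radical, which is all that is required to get $O_\infty(M)=M\cap Z$ --- and ``almost simple modulo scalars'' is what \cite{GPPS} and the paper's own proof actually assert for these types, the word ``simple'' in Table \ref{table:GPPS} being only the rough description.
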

\begin{proof}
 Firstly, if $p=3$ then since $p\nmid q$, Fermat's Little Theorem
implies that $p \mid q^2-1$, thus $p$ cannot be a primitive prime
divisor of $q^n-1$ for $n \ge 3$. If $G$ is a classical group then
$M \le G \le GL(n,q)$ for some $q$, and so $M$ must be one the
examples in \cite{GPPS}. All of the subgroups $M$ of type
$2.6$--$2.9$ are almost simple modulo scalars so it suffices to
check that $(x,M/(M \cap Z)) \in {\mathcal A}$. If $M$ is of type
$2.6$ or $2.7$ then $F^*(M/(M \cap Z)) \cong A_d$ for some $d$, or
a sporadic group and so $(x,M/(M \cap Z)) \in {\mathcal A}$. The
only ppd($n$,$q$,$n$)-elements in type $2.8$ examples ($M/(M \cap
Z)) \in \mathit{Lie}(q_0)$ are with $M^{( \infty )}=G_2(q_1)$,
$q_0=2$ and $M^{(\infty)}={^2}B_2(q_1)$, $q_0=2$ but these
occurrences must all lie in ${\mathcal A}$. Similarly, all of the
type $2.9$ subgroups in \cite[Tables 7 and 8]{GPPS} coincide with
elements of ${\mathcal A}$. Since $x$ acts irreducibly it cannot
be contained in a reducible subgroup of type $2.2$ and it cannot
be contained in a type $2.3$ example since these are only examples
for ppd($d$,$q$,$e$)-elements where $e+1\le d$. Similarly $x$
cannot be contained in a $2.4$(a) type subgroup since these are
only examples for ppd($d$,$q$,$e$)-elements where $e+1 = d$.
\end{proof}
Suppose that $x$ is contained in a classical example of type
$2.1$. By \cite{KL} and \cite{GPPS}, since $n \ge 3$, all of the
classical examples containing ppd($n$,$q$,$n$)-elements are almost
simple modulo scalars. So if $x$ is contained in a type $2.1$
subgroup $M$ then $(x,G)$ cannot be a minimal counterexample since
$p \ge 5$. The symplectic type examples ($2.5$) only occur as
subgroups of $GL(2^a,q)$ but it is assumed that $n$ is an odd
prime. Therefore, the only possibilities for subgroups $M$
containing $x$ are the extension field examples of type
($2.4$(b)). Since $n$ is prime, $M$ must be of type $GL(1,q^n).n$.
Moreover, if $p \mid n$ then $p=n$ since $n$ is prime. However, $p\nmid
q^p-1$ so $p \nmid n$. Thus, $x$ must lie inside the Singer cycle
$GL(1,q^n)$. Furthermore, $C_{GL(n,q)}(x)=GL(1,q^n)$, thus $x$ can
only lie in one such maximal subgroup and applying Theorem
\ref{2M} yields that $(x,G)$ cannot be a minimal counterexample.
\begin{lem}
  If $x$ does not lift to an element of order $p$ in $GL(n,q)$
  then $(x,G)$ cannot be a minimal counterexample.
\end{lem}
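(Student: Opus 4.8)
The plan is to use Lemma~\ref{lift} to pin down $x$ precisely, reduce to $G=\overline G_\sigma=PGL(n,q)$, and then either place $x$ in a proper subgroup $H$ with $(x,H)\in\mathcal A$ (so that Lemma~\ref{InnDiag}(c) applies) or invoke the counting Lemma~\ref{count}. Since $G_0\cong PSL(n,q)$ and, by hypothesis, $x$ has no preimage of order $p$ in $\hat G=GL(n,q)$, alternative (i) of Lemma~\ref{lift}(a) fails, so alternative (ii) holds: $p\mid\gcd(q-1,n)$ and $x$ is $\overline G$-conjugate to $[I_{n/p},\omega I_{n/p},\ldots,\omega^{p-1}I_{n/p}]$ with $\omega$ a primitive $p$th root of unity. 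In particular $p$ is odd, $p\mid n$, $p\nmid q$, and $q-1\ge p\ge 3$, so $q\ge 4$. By Lemma~\ref{lift}(b) (as $p\nmid q$) and Lemma~\ref{InnDiag}(a) we may take $G=PGL(n,q)$. The diagonal matrix $[I_{n/p},\omega I_{n/p},\ldots,\omega^{p-1}I_{n/p}]$ is itself a preimage of order $p$, so our $x$ cannot be of that ``split'' form; instead a preimage $\hat x\in GL(n,q)$ of $x$ must be the ``twisted'' element — giving $V=\mathbb F_q^n$ the structure of an $\mathbb F_{q^p}$-space of dimension $m:=n/p$, $\hat x$ acts as multiplication by a scalar $\mu\in\mathbb F_{q^p}\setminus\mathbb F_q$ with $\mu^p\in\mathbb F_q$, and $C_{GL(n,q)}(\hat x)\cong GL_m(q^p)$.

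Now split on $m$. If $m\ge 2$, then $\hat x$ stabilises a proper $\mathbb F_{q^p}$-subspace of $V$, so $x$ lies in a parabolic of $G$; since $x$ is semisimple and $G_0$ has Lie rank $n-1\ge 2$, Lemma~\ref{2.2}(ii) gives a maximal parabolic with Levi complement $J$ such that $x$ is conjugate to an element of $J$ not centralised by some component $S\cong PSL_{n_1}(q)$, $n_1\ge 2$, of $J$. Then $S\langle x\rangle$ is a proper subgroup of $G$ and $(x,S\langle x\rangle)\in\mathcal A$: it is almost simple modulo $O_\infty(S\langle x\rangle)=C_{S\langle x\rangle}(S)$, one has $x\notin O_\infty(S\langle x\rangle)$ because $x$ does not centralise $S$, and the pair is not in Table~\ref{exceptions} since $q\ge 4$; so Lemma~\ref{InnDiag}(c) finishes this case. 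If $m=1$, then $n=p$ and $\hat x$ acts irreducibly on $V=\mathbb F_q^p$ with $\hat x^p\in Z(GL(p,q))$; now $x$ lies in no parabolic, $C_G(x)$ is a cyclic maximal torus $T$ (the image of $\mathbb F_{q^p}^*$) containing $x$, and $x$ is contained in the Singer normaliser $N_G(T)\cong GL_1(q^p).p$ modulo scalars (which is solvable), and possibly in a subfield, extraspecial-type ($\mathcal C_6$), or almost simple ($\mathcal S$) subgroup. In the last three situations $(x,H)\in\mathcal A$ for the relevant $H$, apart from a short list of small $q$ dealt with directly, so Lemma~\ref{InnDiag}(c) applies; otherwise $x$ lies only in conjugates of $N_G(T)$ and finitely many $\mathcal C_6$-subgroups, and since $|x^G|=|GL(p,q)|/(q^p-1)$ dwarfs the number of elements of $x^G$ in these subgroups, Lemma~\ref{count} (or the Remark following it) produces $g$ with $\langle x,x^g\rangle=G$, which is not solvable.

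The delicate part is the irreducible case $n=p$: one must determine precisely which maximal subgroups of $PGL(p,q)$ can contain an irreducible element of order $p$ — note that $p$ is \emph{not} a primitive prime divisor of $q^p-1$ here, since $p\mid q-1$, so the earlier results on ppd-elements do not apply — then bound the number of Singer normalisers and $\mathcal C_6$-subgroups through $x$ and carry out the counting estimate, together with the few small-$q$ exceptions in which $x$ might lie in an $\mathcal S$- or $\mathcal C_6$-subgroup not covered by $\mathcal A$. The case $m\ge 2$ is, by contrast, essentially immediate from Lemma~\ref{2.2}(ii) and minimality.
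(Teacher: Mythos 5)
Your reduction to the irreducible case $n=p$ is sound (the paper reaches the same point, though by restricting $x$ to a $p$-dimensional irreducible submodule rather than via Lemma~\ref{2.2}(ii)), and your identification of the rational form of a preimage $\hat x$ --- generating $\mathbb{F}_{q^p}$ over $\mathbb{F}_q$ with $\hat x^p=\lambda$ a non-$p$-th power --- is correct. But the heart of the lemma is precisely the case $n=p$, and there your argument has a genuine gap. You enumerate the possible overgroups of $x$ as Singer normalisers, subfield subgroups, and $\mathcal{C}_6$- and $\mathcal{S}$-subgroups, and then propose an uncarried-out counting argument over these. That list omits the class that actually decides the matter: since $v,\hat xv,\dots,\hat x^{p-1}v$ is a basis of $V$ and $\hat x^pv=\lambda v$, the element $\hat x$ permutes the lines $\langle \hat x^iv\rangle$ cyclically, so $x$ lies in an imprimitive subgroup of type $GL(1,q)\wr S_p$ and maps onto a $p$-cycle in $S_p$. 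For $p\ge 5$ this single observation finishes the proof, because $(x,\,GL(1,q)\wr S_p)$ lies in $\mathcal{A}$ and minimality applies --- no classification of maximal overgroups and no counting are needed. Conversely, the existence of these imprimitive overgroups (there are several conjugates of $GL(1,q)\wr S_p$ through $x$, one for each orbit of lines $\{\langle\hat x^iv\rangle\}$) falsifies the premise of your fallback counting step, namely that $x$ lies only in conjugates of $N_G(T)$ and finitely many $\mathcal{C}_6$-subgroups.

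The second part of the gap is $p=3$, which you fold into ``a short list of small $q$ dealt with directly.'' It is not a small-$q$ issue: for every $q\equiv 1\pmod 3$ all of the natural overgroups of an irreducible such $x$ --- the imprimitive group $GL(1,q)\wr S_3$, the Singer normaliser $GL(1,q^3).3$, and the $\mathcal{C}_6$-normaliser of type $3^{1+2}.Sp(2,3)$ --- are solvable, so neither membership in $\mathcal{A}$ nor a count of non-solvable maximal overgroups can be set up as you describe. The paper handles this by an explicit computation: it exhibits a conjugate $y$ of $x$ for which $x^{-1}y$ has order a multiple of $q^2-1$, and then reads off from the list of maximal subgroups of $PSL(3,q)$ that $\langle x,y\rangle$ cannot be solvable. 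Some argument of this kind is required for $p=3$ and is missing from your proposal.
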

\begin{proof}
  Suppose that $x$ does not lift to an element of order $p$ in
  $GL(n,q)$. Now $x^p$ is central so $x$ satisfies the polynomial
  $p(t):=t^p - \lambda$. Now $p(t)$ is irreducible over
  $\mathbb{F}_q$. For $p \mid (q-1)$, since $x$ does not lift, thus any
  field containing a root $\alpha$ of $p(t)$ would be a splitting
  field for $p(t)$. So the degree of any irreducible factor of
  $p(t)$ is the degree of the splitting field extension over
  $\mathbb{F}_q$. However, $p(t)$ has prime degree and so it is
  either irreducible or it splits completely. It cannot split
  completely otherwise $\lambda$ would have $p$th roots and $x$
  would lift to an element of order $p$. Thus, the irreducible
  module for $\langle x \rangle$ has dimension $p$ and so it
  suffices to deal with case where $n=p$. So let $v$ be a vector
  in $V$ and consider the action of $x$ on $v$. The vectors $v,
  xv, x^2v, \ldots , (x^{p-1})v$ form a basis for $V$ since $x$
  acts irreducibly. Moreover $x^pv=\lambda v$. So $x$ is contained in a
  subgroup of type $GL(1,q) \wr S_p$ and $x$ acts as a p-cycle in
  the $S_p$. So for $p \ge 5$, we have shown that $(x,G)$ cannot
  be a minimal counterexample.  Now suppose that $p=3$. Then it
  suffices to assume that $x$ has the form
\begin{displaymath}
  \left(
    \begin{matrix}
      0 & 0 &\lambda  \\
      1& 0 & 0 \\
      0& 1 & 0
    \end{matrix} \right)
\end{displaymath}
Now let $t^2 - \mu_2t - \mu_1$ be an irreducible polynomial in
$\mathbb{F}_q[t]$, such that $\left( \begin{matrix}
    0 & \mu_1  \\
    1& \mu_2
  \end{matrix} \right)$ has order $q^2-1$. Now $x$ is conjugate to
\begin{displaymath}
  y := \left(
    \begin{matrix}
      0 & 0& -\mu_1^{-1}\lambda  \\
      0& \mu_1 & \mu_2^{-1}(\mu_1^{-1}\lambda-\mu_1^2) \\
      1& \mu_2 & -\mu_1
    \end{matrix} \right)
\end{displaymath}
and therefore
\begin{displaymath}
  x^{-1}y := \left(
    \begin{matrix}
      0 & \mu_1& \mu_2^{-1}(\mu_1^{-1}\lambda-\mu_1^2)  \\
      1& \mu_2 & -\mu_1 \\
      0& 0 & -\mu_1^{-1}
    \end{matrix} \right)
\end{displaymath}
has order a multiple of $q^2-1$. Thus, by \cite[Theorem
6.5.3]{GLS}, $\langle x,y \rangle$ is not solvable and hence
$(x,G)$ cannot be a minimal counterexample. The case where $p \mid q$ is considered in the next section.

\section{Unipotent elements}
\begin{lem} \label{genunipotents}
Suppose that $G_0$ is a simple group of Lie type and suppose that
$x \in G_0$ is unipotent of order $p$. If $G_0$ is defined over
$\mathbb{F}_q$ and $q\not=3$ then $(x,G)$ cannot be minimal
counterexample unless $G_0 =  PSU(3,q)$ or ${^2}G_2(q)$.
\end{lem}
\begin{proof}
The case where $G_0 = PSL(2,q)$ has already been done. Since $p$
is an odd prime, $G_0 \not={^2}B_2(q)$ or ${^2}F_4(q)$. In the
remaining cases, by Lemma \ref{2.2}, for any two maximal parabolic
subgroups $P_1$ and $P_2$ (containing a common Borel subgroup)
there exists a conjugate of $x$ that is contained in
$P_i\backslash U_i$ for either $i=1$ or $2$. The parabolic
subgroups can be chosen so that the Levi complement has only one
component, and since $q \ge 5$, it will always be almost simple.
It follows that since $(xU_i,P_i/U_i)$ will be contained in
$\mathcal A$, $(x,G)$ cannot be a minimal counterexample. Table
\ref{table:unipotentnot3} describes the parabolic subgroups to
choose and the possibilities for the Levi complement.
\begin{table}
    \centering
\begin{tabular}{|c|c|c|}
    \hline
$G_0$ & Nodes corresponding to $P_1$ and $P_2$ & Levi complement type  \\
      & (Bourbaki notation used where node is specified) &\\
\hline
   $A_{l}(q)$, $l \ge 2$  &   end nodes     &$A_{l-1}(q)$\\
    $B_2(q)$ &      end nodes    &$A_1(q)$\\
    $B_l(q)$, $l \ge 3$ &   end nodes  &$B_{l-1}(q)$, $A_{l-1}(q)$   \\
   $C_l(q)$, $l \ge 3$  & end nodes&$C_{l-1}(q)$, $A_{l-1}(q)$  \\
    $D_4(q)$ &  any $2$ end nodes   &   $A_{3}(q)$ \\
$D_l(q)$, $l \ge 5$ &   any $2$ end nodes&  $D_{l-1}(q)$, $A_{l-1}(q)$ \\
 ${^2}A_l(q)$, $l \ge 3$, $l$ odd  &    end and middle node& $A_{(l-1)/2}(q^2)$,
${^2}A_{l-2}(q)$    \\
  ${^2}A_l(q)$, $l \ge 4$, $l$ even   &  end and middle node
          &  ${^2}A_{l-2}(q)$,  $A_{(l-2)/2}(q^2)$             \\
 ${^2}D_4(q)$,    & end nodes &  ${^2}A_{3}(q)$,  $A_{2}(q)$ \\
 ${^2}D_l(q)$, $l \ge 5$   & end nodes &${^2}D_{l-1}(q)$, $A_{l-2}(q)$\\
  $E_6(q)$   &   nodes $1$ and $6$ & $D_5(q)$
     \\
$E_7(q)$   &     nodes $1$ and $2$ & $D_6(q)$, $A_6(q)$
     \\
$E_8(q)$   &    nodes $1$ and $2$ & $D_7(q)$, $A_7(q)$
     \\
$F_4(q)$   &    end nodes & $B_3(q)$, $C_3(q)$
     \\
$G_2(q)$   &     end nodes  &  $A_1(q)$
     \\
${^2}E_6(q)$   & end nodes & ${^2}D_4(q)$, ${^2}A_5(q)$
     \\
\hline
\end{tabular}
\vspace{+1mm} \caption{\label{table:unipotentnot3}Maximal
parabolic subgroups and their Levi complements used in Lemma
\ref{genunipotents}}
\end{table}
\end{proof}
 The next lemma also eliminates the possibility that $q=3$ for
 classical groups.
\begin{lem}\label{q3}
  If $x$ is an element of order $3$ in a classical group $G$
  defined over $\mathbb{F}_3$ then $(x,G)$ cannot be a minimal
  counterexample.
\end{lem}
\begin{proof}
  The aim is to show that if $x$ is not a long root element then,
unless the dimension of the natural module $V$ is very small,
there exists a subgroup $H$ such that $(x,H)$ is in ${\mathcal
A}$. By \cite[pp.34--38]{Wall}, if $x$ is an element of order $3$
in a classical group over $\mathbb{F}_3$ then $x$ will nearly
always fix an orthogonal decomposition unless $n$ is very small.
Suppose that $x$ has order $3$ in $SL(n,3)$ with $n \ge 5$. Then
there exists $x$-invariant subspaces $U$ and $W$ such that $V=U
\oplus W$. Without loss of generality, it suffices to assume that
the dimension of $U$, $k$ say, is at least $3$ and $x$ acts non
trivially on $U$.  Suppose that $x$ does not act as a transvection
on V. If $x$ does not act as a transvection on $U$ then $(x,G)$
cannot be a minimal counterexample. So assume that $x$ acts a
transvection on $U$. Then $x$ must act non-trivially on $W$. So
the dimension of $W$, $n-k$, is at least $2$ and it suffices to
assume that $x$ acts as a transvection on $W$ also, but since $n
\ge 5$ there is a four dimensional subspace $U^{\prime}$ on which
$x$ acts invariantly and is not a transvection and so
$x_{U^{\prime}}$ is contained in a subgroup of type $GL(4,3)$. Now
suppose that $x$ is contained in a symplectic group $Sp(n,3)$ and
that $x$ is not a symplectic transvection. If $n \ge 8$ then $x$
fixes an orthogonal decomposition $U \perp W$. It suffices to
assume that $x$ acts non-trivially on both $U$ and $W$ otherwise
$(x,G)$ is not a minimal counterexample. Moreover, it suffices to
assume that $x$ acts as a symplectic transvection on $U$ and on
$W$, otherwise we $(x_U,Sp(U))$ or $(x_W,Sp(W))$ is contained in
${\mathcal A}$. So assume that for all $u \in U$ and all $w \in W$
\begin{displaymath}
  x_U : u \rightarrow  u+ \lambda \kappa_U(u,a)a, \: a \in U,
  \kappa(a,a)=0;
\end{displaymath}
\begin{displaymath}
  x_W : w \rightarrow w+ \lambda^{\prime} \kappa_W(w,b)b, \: b \in W,
  \kappa(b,b)=0.
\end{displaymath}
Choose $u \in U$ such that $\kappa_U(u,a) \not= 0$ and $w \in W$
such that $\kappa_W(w,b) \not= 0$. Then $x$ acts invariantly on
the non-degenerate subspace $\langle u,w,a,b \rangle$ and is not a
transvection on it, so $(x,G)$ cannot be a minimal counterexample.
Now suppose that $x$ is contained in a unitary group $SU(n,3)$ and
that $x$ is not a unitary transvection. Suppose that $n \ge 5$.
Then there exists an $x$ invariant orthogonal decomposition $U
\perp W$ and as before, it suffices to assume that $x$ acts
non-trivially on both subspaces. Moreover, there exists $H$ such
that $(x,H)$ is contained in ${\mathcal A}$ unless $x$ acts as a
unitary transvection on both $U$ and $W$. So for all $u \in U$ and
all $w \in W$
\begin{displaymath}
  x_U : u \rightarrow  u+ \lambda \kappa_U(u,a)a, \: a \in U,
  \kappa(a,a)=0;
\end{displaymath}
\begin{displaymath}
  x_W : w \rightarrow  w+ \lambda^{\prime} \kappa_W(w,b)b, \: b \in W,
  \kappa(b,b)=0.
\end{displaymath}
Choose $u$ and $w$, as in the symplectic case, so that $\langle
u,w,a,b \rangle$ is a $4$ dimensional, non-degenerate subspace on
which $x$ acts invariantly, but not as a transvection. Then
$(x,G)$ is not a minimal counterexample in this case either.
Finally, suppose that $x$ is contained in an orthogonal group
$\Omega^{\epsilon}(n,3)$ and $x$ is not a long root element.
Suppose that $n \ge 9$. Then there exists an $x$-invariant
orthogonal decomposition $U \perp W$. It suffices to assume that
the action on $U$ and $W$ is not trivial as in the previous cases.
Since $n \ge 9$, it suffices to assume that the dimension of $U$,
$k$ say, is at least $5$. Then if $(x,G)$ is a minimal
counterexample, $x$ must act as a long root element on $U$. Now
either $x$ acts as a long root element on $W$, or $x$ does not act
as a long root element on $W$ and $n-k \le 4$. In the latter case
one can add dimensions from $U$ to $W$ so that $W$ has dimension
at least $5$ and $W$ is still $x$ invariant and non-degenerate. If
this is done then $x_W$ is contained in an orthogonal group $H$
such that $(x_W,H) \in {\mathcal A}$. In the former case, $n-k \ge
4$ ($W$ has Witt defect $0$ if $n-k =4$) and for all $u \in U$ and
all $w \in W$
\begin{displaymath}
\begin{split}
  x_U : u \rightarrow  u+ \lambda \kappa_U(u,a)b - \lambda \kappa_U(u,b)a; \\
  x_W : w \rightarrow w+ \lambda^{\prime}
  \kappa_W(w,c)d-\lambda^{\prime} \kappa_W(w,d)c
\end{split}
\end{displaymath}
where $a,b \in U$; $c,d \in W$; and
$Q(a)=Q(b)=\kappa_U(a,b)=0=Q(c)=Q(d)=\kappa(c,d)$.\\
\indent If $u_1,u_2 \in U$ are such that
$Q(u_i)=0=\kappa_U(u_1,a)=\kappa_U(u_2,b)$, and $\kappa_U(u_1,b)
\not=0$, $\kappa_U(u_2,a) \not =0$, then $x$ acts invariantly on
the non-degenerate $4$ dimensional subspace $\langle
u_1,u_2,a,b\rangle$. Similarly, take $w_1,w_2 \in W$ such that $x$
acts invariantly on the non-degenerate $4$ dimensional subspace
$\langle w_1,w_2,c,d\rangle$. Then $x$ acts invariantly on the
non-degenerate $8$ dimensional subspace $\langle
u_1,u_2,a,b,w_1,w_2,c,d \rangle$---which has Witt defect $0$---and
$x$ does not act as a long root element on it. So it is enough to
check the classical groups of dimension at most $8$ over
$\mathbb{F}_3$ in MAGMA. \par
 If $x$ is a transvection in $SL(n,3)$ then one can reduce to the
case where $n=3$. Similarly if $x$ is a transvection in $SU(n,3)$
or $Sp(n,3)$ then one can reduce to the case where $x \in SU(3,3)$
or $x \in Sp(4,3)$. If $x$ is a long root element in an orthogonal
group then one can reduce to the six dimensional case but
$P\Omega^{+}(6,3) \cong PSL(4,3)$ and $x$ maps to a transvection
under this isomorphism. We can therefore further reduce to
$SL(3,3)$. By \cite{GS}, there exist three conjugates of $x$ that
generate $G_0$ when $G_0=PSL(3,3)$ or $PSU(3,3)$, and four
conjugates of $x$ that generate $G_0=PSp(4,3)$.
\end{proof}

\section{Case U}
It suffices to assume that $n \ge 3$ and $(n,q) \not= (3,2)$. By
Lemmas \ref{Outer}, \ref{genunipotents} and \ref{q3}, if $(x,G)$
is a minimal counterexample, with $G_0 \cong PSU(n,q)$, then $x$
is a semisimple element in $PGU(n,q)$, or $x$ is unipotent in
$PSU(3,q)$, and $q \ge 5$.

\subsection{$x \in PGU(n,q)$, $p \nmid q$ and $p \nmid (n,q+1)$}

By Lemma \ref{lift}, $x \in PGU(n,q)$ lifts to an element in
$GU(n,q)$ of order $p$, with the same sized conjugacy class.
Without loss of generality, it suffices to assume that
$G=PGU(n,q)$ by Lemma \ref{InnDiag}.  Consider the minimal
polynomial of $x$, $m_x(t)$ say. Observe that $m_x(t)$ divides
$t^p-1$ and $t^p-1/(t-1)$ factors over $\mathbb{F}_{q^2}$ as
\begin{displaymath}
  q_1(t) \ldots q_s(t)
\end{displaymath}
where the $q_i(x)$'s are polynomials of degree $k$ (where $k$ is
the smallest positive integer such that $p \mid q^{2k}-1$).  The same
argument as for the case when $G_0=PSL(n,q)$ shows that $x$ will
leave invariant and act non-trivially and irreducibly on a $k$
dimensional subspace $U$ of $V$. Since $x$ acts irreducibly on
$U$, $U$ is either non-degenerate or totally singular (for if $U$
is not non degenerate then there exists $v \in U$ such that
$\kappa_U(v,u)=0$ for all $u \in U$; but $x$ acts irreducibly on
$U$ so $\kappa_U=0$). If $k \ge 2$, in both cases, consider the
induced isometry $x_U$ of $U$. If $U$ is totally singular then
$x_U$ is contained in a group of type $GL(k,q^2)$ and so $(x,G)$
cannot be a minimal counterexample. If $U$ is non-degenerate then
$x_U$ is contained in a subgroup of type $GU(k,q)$. Observe that
if $U$ is non-degenerate then $k$ is odd. For if $k$ was even
then, since $|GU(k,q)|=q^{k(k-1)/2} \prod_{i=1}^k (q^i- (-1)^i)$,
and $p$ would divide $q^k-1=q^{2j}-1$, contradicting the choice of
$k$. Thus if $k \ge 2$ then unless $x$ acts irreducibly or $q=2$,
$(x_U,GU(k,q))$ is contained in ${\mathcal A}$ and $(x,G)$ cannot
be a minimal counterexample.\\
\indent If $q=2$ then there are exceptions in Table
\ref{exceptions}. If $k > 3$ then $x_U$ is contained in $GU(k,2)$,
and the assumption on $k$ implies that $p \not= 3$, so
$(x_U,GU(k,2))$ is contained in ${\mathcal A}$. If $(k,q)=(3,2)$
then $p=7$. Since $U$ is non-degenerate, $x$ also acts invariantly
on $U^{\perp}$. For $n \ge 7$, if this action is non-scalar then
$x_{U^{\perp}}$ is contained in $GU(n-3,2)$ and
$(x_{U^{\perp}},GU(n-3,2))$ is contained in ${\mathcal A}$. If the
action is scalar then take a non-singular vector $w \in
U^{\perp}$, so that $x$ acts invariantly on $U^{\prime}:= U \oplus
\langle w \rangle$. Therefore $x_{U^{\prime}}$ is contained in
$GU(4,2)$ and, since $p \not=3$, it follows that $(x,G)$ is not a
minimal counterexample.\\
\indent If $k=1$ then $x$ acts invariantly on a $1$-dimensional
non-degenerate or singular subspace $U$. Observe that $q\not=3$
since this would imply that $p=2$. First suppose that $q \not=2$
so that $q \ge 4$. If $U$ is non-degenerate then consider the
action of $x$ on $U^{\perp}$. Either $x$ acts non-trivially on
$U^{\perp}$, in which case $x_{U^{\perp}}$ will be contained in
$GU(n-1,q)$, or $x$ has a scalar action on $U^{\perp}$, in which
case there exists a $2$-dimensional non-degenerate subspace
$U^{\prime}$ such that $x_{U^{\prime}}$ is contained in $GU(2,q)$.
Since $q \ge 4$, $(x,G)$ cannot be a minimal counterexample in any
case. Now suppose that $q=2$ and $k=1$ so that $p=3$. If $x$ has
order $3$ in $GU(n,2)$ then a Sylow $3$-subgroup is contained in a
subgroup of type $GU(1,q) \wr S_n$. So it suffices to assume that
$x$ will lie in a subgroup $GU(1,2) \wr S_n$, and if $n \ge 5$, it
suffices to assume that $x$ is contained in $GU(1,2) \perp \ldots \perp
GU(1,2)$ since otherwise $x$ will be non-trivial in a subgroup of
type $S_n$. Thus for $n \ge 5$, if $x$ is not a reflection then
there exists an $n-1$ dimensional, non-degenerate, $x$-invariant
subspace $U^{\prime}$ such that $x_{U^{\prime}} \in GU(n-1,2)$
with $(x_{U^{\prime}},GU(n-1,2))$ is contained in ${\mathcal A}$.
A MAGMA calculation shows that the only exceptions to the theorem
for $G:=PGU(4,2)$ are reflections of order $3$. If $x$ is a
reflection of order $3$ in $GU(n,2)$ then it suffices to treat the
case where $x$ is contained in $GU(4,2)$. A calculation in MAGMA
shows that there exist $g_1,g_2,g_3\in G$ such that $\langle
x,x^{g_1},x^{g_2}, x^{g_3} \rangle$ is not solvable. \\
\indent If $k=1$ and there is not a $1$-dimensional non-degenerate
$x$ invariant subspace then $U$ is totally singular and $x$ is
contained in a parabolic subgroup. Thus by Lemma \ref{2.2}, it
suffices to assume that $x$ acts non-centrally on each component
of the Levi complement of some maximal parabolic subgroup. The
parabolic subgroups of ${^2}A_m(q)$ have Levi complements of type
${^2}A_{m-2}(q)$, $A_k(q^2){^2}A_{m-2k-2}(q)$ and, if $m$ is odd,
${^2}A_{(m-1)/2}(q^2)$. So $(x,G)$ cannot be a minimal
counterexample unless $m=2$, $(m,q)=(3,2)$, or $(m,q)=(4,2)$. If
$m=2$, then $x$ is a reducible semisimple element in $GU(3,q)$, so
$q \ge 4$ and so $x$ leaves invariant a $2$ dimensional,
non-degenerate subspace $U^{\prime}$. In this case,
$x_{U^{\prime}}$ is contained in $GU(2,q)$ and so
$(x_{U^{\prime}},GU(2,q))$ is contained in ${\mathcal A}$. When
$(m,q)=(3,2)$, $G_0=PSU(4,2)$. When $(m,q)=(4,2)$, $G_0 \cong
PSU(5,2)$. These cases can be excluded using MAGMA. \\
\indent  The remaining case is when $k=n$ and $x$ acts irreducibly
in $GU(n,q)$ where $n \ge 3$ is odd (and $(n,q) \not= (3,2)$). Now
one can use \cite{GPPS} to find the possibilities for a maximal
subgroup $M$ containing $\langle x,x^g \rangle$ in $GU(n,q)$. Note that $x$ is a ppd($n$,$q^2$,$n$)-element, and
that $n \ge 3$ is odd. So since $n$ is not a power of $2$ there
are no $2.5$ examples. Lemma \ref{2.6} implies that $M$ must be a
type $2.1$ or $2.4$(b) subgroup. By \cite{KL}, the only possible
such classical maximal subgroups are of type $GU(n,q_0)$ and
$O_n(q)$ ($q$ odd). The only subgroups of this type which contain
an element of order $p\ge 5$ and are not almost simple modulo
scalars are those of type $O_3(3)$ when $n=q=3$. One can treat
$GU(3,3)$ separately in MAGMA. The only other examples are the
field extension examples (type $2.4$(b)). By \cite{KL} and since
$n$ is odd these are subgroups of type $GU(n/r,q^r)$ where $r$ is
an odd prime. Now unless $n=r$ these subgroups are almost simple
modulo scalars and thus $(x,M)$ is contained in ${\mathcal A}$. If
$n=r$ and $x \in M$, where $M$ is a subgroup of type $GU(1,q^n)$,
then observe that $x$ is contained in only one such maximal
subgroup and Theorem \ref{2M} implies that $(x,G)$ cannot be a
minimal counterexample.

\subsection{ $x \in PGU(n,q)$ and $p \mid (q+1,n)$}

Observe that Lemma \ref{InnDiag} still applies so assume that $G =
PGU(n,q)$. This time some conjugacy classes of order $p$ could
only lift to non-trivial scalars in $GU(n,q)$. If $x$ lifts to an
element of order $p$ in $GU(n,q)$ then apply the same argument as
in the previous section. If not then $x^p$ lifts to a non-trivial
scalar in $GU(n,q)$.  So $x$ will have order $p^mj$ say where $p
\nmid j$, but since $\langle x^j\rangle \le \langle x \rangle$ and
$x^j$ will still have order $p$ in $PGU(n,q)$, it suffices to
assume that $j=1$. So assume that the order of $x$ in $GU(n,q)$ is
$p^m$. The minimal polynomial of $x$, $m_x(t)$ divides
$t^p-\zeta_{p^{m-1}}$ where $\zeta_{p^{m-1}}$ is a primitive
$p^{m-1}$th root of unity in $\mathbb{F}_{q^2}$, and
$p^{m-1} \mid (q+1)$. Since there are $p$th roots of unity, either
$t^p-\zeta_{p^{m-1}}$ splits, or it is irreducible over
$\mathbb{F}_{q^2}$. For if $a$ is a root of the equation
$t^p-\zeta_{p^{m-1}}=0$ contained in some field extension, then
this field extension contains all of the roots, $a\omega,
a\omega^2, \ldots, a\omega^{p-1}$. So $t^p-\zeta_{p^{m-1}}$ will
factor into irreducible polynomials of degree equal to the degree
of the smallest field extension containing $a$. However, $p$ is
prime, so the degree of these polynomials is either $1$ or $p$. If
$t^p- \zeta_{p^{m-1}}$ splits then $m_x(t)|
(t-\zeta_{p^{m}})(t-\zeta_{p^{m}}\omega) \ldots
(t-\zeta_{p^{m}}\omega^{p-1})$, where $\zeta_{p^{m}}$ is a
primitive $p^m$th root of unity in $\mathbb{F}_{q^2}$. However
this would imply that $p^m$ divides $q+1$. For $p^{m-1}\mid (q+1)$,
and since $\zeta_{p^m} \in \mathbb{F}_{q^2}$, $p^m \mid (q-1)(q+1)$,
but $p \nmid q-1$ since $p \ge 3$. This would be a contradiction,
since $z= \zeta_{p^m}I_n$ would lie in $Z(GU(n,q))$, so
$(z^{-1}x)^p = \zeta^{-1}_{p^{m-1}} \zeta_{p^{m-1}}I_n = I_n $,
and $x$ would lift to an element of order $p$. So, it suffices to
assume that $m_x(t) = t^p - \zeta_{p^{m-1}}$ is irreducible over
$\mathbb{F}_{q^2}$. It follows that $x$ has rational canonical
form $\rm{diag}[A_1, \ldots, A_{n/p}]$, where
\begin{displaymath}
  A_i = \left(\begin{matrix}
      & I_{p-1} \\
      \zeta_{p^{m-1}} & \\
    \end{matrix} \right).
\end{displaymath}
Thus $x$ acts irreducibly on a subspace $W$ of dimension $p$. Now
$p^m \mid q^{2p}-1$, and in fact $p^m \mid q^p+1$, since if $p \mid q^p-1$ then
$q^p \equiv 1$ (mod $p$) but also $q^p \equiv q$ (mod $p$) by
Fermat's Little Theorem. Therefore $q\equiv 1$ (mod $p$), and
$p \mid q+1$ which contradicts the assumption that $p\ge
3$. So $p^m$ divides $q^p+1$.\\
\indent Assume that $W$ is non-degenerate since if $W$ was totally
singular then $x_W$ would be contained in $GL(p,q^2)$ and
$(x_W,GL(p,q^2))$ would be contained in ${\mathcal A}$. Thus, if
$x$ does not lift to an element of order $p$ then it suffices to
assume that $n=p$ and that $x$ acts
irreducibly. \\
\indent Since $n=p$, and $p \mid q+1$, one can show that a maximal
subgroup $M$ of $GU(p,q)$ of type $GU(1,q)\wr S_p$ always contains
a Sylow $p$-subgroup of $GU(p,q)$. Thus, it suffices to assume
that $x$ is contained in $M$, the normalizer of a maximal split
torus $T$. Moreover, $x$ is non-trivial in $N_G(T)/T \cong S_p$,
since it acts irreducibly. So if $p \ge 5$ then $x$ cannot be a
minimal counterexample. Now suppose that $p=3$. Then $x$ is an
irreducible element in $GU(3,q)$. The character table of $GU(3,q)$
in \cite{Ennola} and the same argument as when $G_0=PSL(2,q)$
implies that there exists an element $z$ in $GU(3,q)$ of order
$q^2-1$ such that $x$ is conjugate to $xz$. So, if $x^g=xz$ then
$\langle x,x^g\rangle= \langle x,z\rangle$ contains $PSU(3,q)$,
since it cannot be contained in any of the maximal subgroups
described in \cite[Theorem 6.5.3]{GLS}.

\subsection{$x \in PSU(3,q)$ and $p \mid q$, $q \ge 5$}

If $x$ is a unipotent element in $G_0=PSU(3,q)$ then the maximal
subgroups of $G_0$ are described in \cite[Theorem 6.5.3]{GLS} and
Lemma \ref{count} can be applied. By Lemma \ref{q3}, there are no
minimal counterexamples when $q=3$. So assume that $q \ge 5$. If
$x$ is a transvection then it stabilizes a non-degenerate $2$
dimensional subspace, and acts non-trivially on it, so $(x,G)$
cannot be a minimal counterexample. Thus $x$ is not a transvection
and $|C_{PSU(3,q)}(x)|=q^2$. Since $(x,G)$ is a minimal
counterexample, the only possibilities for maximal subgroups $X_i$
containing $x$ are of type $GU(1,q) \wr S_3$ (for $p=3$),
$GU(1,q^3)$ (for $p=3$), and parabolic subgroups. Note that
$PSU(3,2)$ and $PGU(3,2)$ do not contain $x$ since they are
$\{2,3\}$-groups that are only relevant when $3 \nmid q$. There is
only one conjugacy class of each of the given subgroups and
$|x^{PSU(3,q)}\cap X_i|$ is at most $6(q+1)^2$, $3(q^2-q+1)$, and
$q^3-1$ in each case respectively. So
\begin{displaymath}
  \begin{split}
    |G|/|C_{G}(x)|^2 = q^3(q^2-1)(q^3+1)/q^4 = & (q^2-1)(q^3+1)/q \ge \\
    (q^3-1) + (q^2-q+1).3 + (q+1)^2.6 & \ge \sum_{i} |x^G \cap X_i|
  \end{split}
\end{displaymath}
for $q \ge 5$, and thus $(x,G)$ cannot be a minimal counterexample
by Lemma \ref{count}.
\end{proof}

\section{Case S} \label{sec:S}

If $G_0 \cong PSp(n,q)$ then the only case left to prove is when
$x$ is a semisimple element contained in ${\rm Inndiag}(PSp(n,q))
\cong PGSp(n,q)$. Since $|PGSp(n,q):PSp(n,q)|=(2,q-1)$, $x$ must
be contained in $PSp(n,q)$, so suppose that $G=G_0$. Furthermore,
by Lemma \ref{lift}, $x$ always lifts to an element in $Sp(n,q)$
of order $p$. \par
 Let $e$ be the smallest positive integer such that $p \mid q^e -1$. Hence the
minimal polynomial of $x$ will be a product of irreducibles of
degree $e$, and possibly $t-1$. Also, $V$ will have an
$e$-dimensional $x$ invariant subspace $U$, on which $x$ acts
irreducibly. $U$ is either totally singular or non-degenerate.
This depends on $e$:
\begin{itemize}
\item {\it $e$ odd and $e \not=1$}
 If $e$ is odd then $U$ is totally singular since there are no
non-degenerate subspaces of $V$ of odd order. So, if $e \ge 3$
then it suffices to assume that $x$ acts non-trivially on $U$, and
$x_U$ is contained in a subgroup $H$ of type $GL(e,q)$. Clearly
$(x,H)$ is contained in ${\mathcal A}$ in this case.

 \item {\it $e=1$}.  If $e$ is $1$ then $U$ is a $1$ dimensional
totally singular subspace, so $x$ is contained in a parabolic
subgroup. By Lemma \ref{2.2}, it suffices to assume that $x$ acts
non-centrally on all the components of the Levi complement of a
maximal parabolic subgroup. This maximal parabolic subgroup can be
of type $C_{m-1}(q)$ ($m \ge 3$); $A_{k}(q)C_{m-k-1}(q)$ ($m \ge
4, 1\le k \le m-3$ ); $A_{m-1}(q)$; or $A_1(q)A_1(q)$ ($m=3$).
Since $p\mid q-1$, $q$ is at least $4$, thus $(x,G)$ cannot be a
minimal counterexample.

\item {\it $e$ even, $e < n$}  If $U$ is totally singular then
$x_U$ is contained in a subgroup $H$ of type $GL(e,q)$, and
$(x,H)$ is in ${\mathcal A}$ unless $(e,q)=(2,2)$ (if
$(e,q)=(2,3)$ then $p=2$). If $(e,q)=(2,2)$ then it suffices to
assume that $n \ge 8$, since $Sp(4,2) \cong S_6$, and the case
$Sp(6,2)$ can be excluded using MAGMA. Since $U$ is totally
singular, $x$ is contained in a parabolic subgroup so we can use
Lemma \ref{2.2} as in the previous case. It follows that $(x,G)$
cannot be a minimal counterexample in this case either. If $U$ is
non-degenerate then $x_U$ is contained in a subgroup $H$ of type
$Sp(e,q)$. $(x,H)$ is contained in ${\mathcal A}$ for $e \ge 4$
and for $e=2$, $q \ge 4$. If $e=2$, and $q \le 3$ then $q=2$. But
it suffices to assume that $n \ge 6$, since $Sp(4,2) \cong S_6$,
so $x_{U^{\perp}}$ is contained in a subgroup $H$ of type
$Sp(n-e,2)$ and $(x,H)$ is contained in $\mathcal A$.
\end{itemize}

If $x$ acts irreducibly then \cite{GPPS} describes the possible
maximal subgroups of $Sp(n,q)$ that could contain $x$.  It
suffices to assume that $n$ is at least $4$, since $SL(2,q) \cong
Sp(2,q)$.  The only $M$'s of concern are those that contain
ppd($n$,$q$,$n$)-elements. By Lemma \ref{2.6}, it suffices to
assume that $M$ is a subgroup of type $2.1$, $2.4$(b) or $2.5$. If
$M$ were a subgroup of type $2.1$ then so long as $M$ is almost
simple modulo scalars, $(x,M)$ is contained in ${\mathcal A}$. By
\cite{KL}, the only possible such maximal subgroups $M$ are type
$2.1$(b) where $M$ contains $Sp(n,q_0)$; and type $2.1$(d) where
$M$ contains $\Omega^{\epsilon}(n,q_0)$ for $q_0$ even. In these
cases, $M$ is almost simple and $(x,M)$ is contained in ${\mathcal
A}$ unless $(n,q)=(4,2)$. However since $Sp(4,2) \cong S_6$, this
case can be excluded. If $M$ is of type $2.5$ then by \cite{KL},
$M$ would be of type $P.O^{-}(2m,2)$ where $q$ is an odd prime,
$n=2^m$, and $P$ is a $2$-subgroup. However since $x$ has odd
order, $xO_2(M)$ would be non-trivial in the quotient $M/O_2(M)$.
Moreover, $e \ge 4$ implies that $m \ge 2$ and thus $M/O_2(M)$ is
almost simple of type $O^{-}(2m,2)$. The only other possibility
for $M$ is to be of type $2.4$(b). In this case, by \cite{KL}, $M$
would be of type $Sp(n/b,q^b)$, where $b$ is a prime and $n/b$ is
even; or of type $GU(n/2,q)$. However, since $n \ge 4$, these are
all almost simple modulo scalars unless $(n,q)=(4,2)$, $(4,3)$, or
$(6,2)$. These exceptions are not a problem since $Sp(4,2) \cong
S_6$, $PSp(4,3) \cong PSU(4,2)$ ($p \not=3$ since $x$ is a
ppd($4$,$3$,$4$)-element) and there are no elements of prime order
in $Sp(6,2)$ that act irreducibly.

\section{Case O} \label{sec:O}

It suffices to
assume that $n \ge 7$ since otherwise $G_0$ is isomorphic to one
of the classical groups that have already been considered.
 If $x \in {\rm Inndiag}(P\Omega_n^{\epsilon}(q))$ has odd prime
order then $x \in P\Omega_n^{\epsilon}(q)$. By Lemma \ref{lift},
$x$ lifts to an element of order $p$ in $\Omega_n^{\epsilon}(q)$.
Lemmas \ref{Outer}, \ref{genunipotents}, and \ref{q3} imply that
if $(x,G)$ is a minimal counterexample then $x \in {\rm
Inndiag}(G_0)$ and $x$ is semisimple. \\
\indent Let $e$ be minimal such that $p \mid q^e -1$, so there exists
an $e$-dimensional subspace $U$ on which $x$ acts invariantly and
irreducibly. Consider the different values for $e$:

\begin{itemize}
\item {\it $e$ odd, $e \ge 3$.} If $e$ is odd then $p \nmid
|O(e,q)|$ so $U$ must be totally singular. It follows that $x_U$
is contained in a subgroup $H$ of type $GL(e,q)$ and $(x_U,H) \in
{\mathcal A}$.

\item {\it $e=1$.}  If $e=1$ then $q \ge 4$ since $p \mid q-1$. If $x$
acts invariantly on a non-degenerate $1$-dimensional subspace $U$
then consider the action of $x$ on $U^{\perp}$. If this action is
non-scalar then $(x,G)$ is not a minimal counterexample since
$x_{U^{\perp}}$ is contained in a subgroup $H$ of type
$O^{\epsilon}(n-1,q)$ and $(x_{U^{\perp}},H)$ is contained in
${\mathcal A}$ since $n\ge 7$. If the action is scalar, then there
exists a $3$-dimensional subspace $Y$ of $U^{\perp}$ such that
$U^{\prime}:=U \oplus Y$ is non-degenerate and $x$ invariant. In
this case, $x_{U^{\prime}}$ will be contained in a subgroup $H$ of
type $O^{\epsilon}(4,q)$. In particular, $(x_{U^{\prime}},H)$
would be contained in ${\mathcal A}$. If $x$ acts invariantly on a
singular, $1$-dimensional subspace then $x$ is contained in a
parabolic subgroup. Thus, by Lemma \ref{2.2}, it suffices to
assume that $x$ acts non-centrally on each component of the Levi
complement of some maximal parabolic subgroup. The possible types
of maximal parabolic subgroup are: $A_{m-1}(q)$, or $B_{m-1}(q)$
if $G_0=B_{m}(q)$; $D_{m-1}(q)$, $A_{m-1}(q)$,
$A_{m-3}(q)A_1(q)A_1(q)$, or $A_k(q)D_{m-k-1}(q)$ if $G_0=D_m(q)$;
or ${^2}D_{m-1}(q)$, $A_{m-2}(q)$, $A_k(q){^2}D_{m-k-1}(q)$, or
$A_{m-3}(q)A_1(q^2)$ if $G_0={^2}D_m(q)$. Since if $G_0=B_m(q)$ then $m \ge 3$ and in the other cases $m \ge 4$, it follows that $(x,G)$ cannot be a minimal
counterexample.

\item {\it $e=2$}.  If $e=2$ then $p \mid q+1$. If $U$ is totally
singular then $x$ is contained in parabolic subgroup and Lemma
\ref{2.2} is applied as above. If $G_0=B_m(q)$, then $m \ge 3$ and $q \ge 5$ since $B_m(2^a) \cong
C_m(2^a)$. The only complication is that if $G_0=D_4(2)$ then all
of the components of a parabolic subgroup of type
$A_1(q)A_1(q)A_(q)$ are solvable. One can verify in MAGMA that
there are no counterexamples when $G_0=D_4(2)$. Now suppose that
$x$ acts invariantly on a $2$-dimensional non-degenerate subspace
$U$. Then $U$ will be anisotropic because of the order of $x$. If
the action of $x$ on $U^{\perp}$ is non-scalar then
$x_{U^{\perp}}$ will be contained in a subgroup $H$ of type
$O^{-\epsilon}(n-2,q)$ (since $U$ has Witt defect $1$,
\cite[4.1.6]{KL}) and $(x_{U^{\perp}},H)$ will be contained in
${\mathcal A}$. Suppose that $x$ acts as a scalar on $U^{\perp}$.
In this case, let $W$ be a 4-dimensional non-degenerate subspace
of $U^{\perp}$ (of Witt defect $0$). Then $x$ will act invariantly
on the non-degenerate space $U^{\prime}=U \oplus W$. So
$x_{U^{\prime}}$ will be contained in a subgroup $H$ of type
$O^-(6,q)$, and $(x_{U^{\prime}},H)$ will be contained in
${\mathcal A}$.

\item {\it $e$ even, $e \ge 4$.} If $e$ is even then $p \mid 
q^{e/2}+1$. Suppose that $U$ is totally singular. Then $x_U$ will
be contained in a subgroup $H$ of type $GL(e,q)$, and $(x_U,H)$
will be contained in ${\mathcal A}$ since $e \ge 4$. So assume
that $U$ is non-degenerate. If $e \not = n$ then $x_U$ will lie in
a subgroup $H$ of type $O^-(e,q)$, with $(x_U,H)$ contained in
${\mathcal A}$.  The only case left to consider is where $x$ acts
irreducibly on $O^-(e,q)$.
\end{itemize}

Since $e=n$ is even it suffices to assume that $n \ge 8$. One can
use \cite{KL} and \cite{GPPS} to find the possible maximal
overgroups of $x$. Lemma \ref{2.6} implies that $M$ must be a
subgroup of type $2.1$, $2.4$(b) or $2.5$. The only subgroups $M$
of type $2.1$ are of type $O^{-}(n,q_0)$, and if $M$ was such a
subgroup then $(x,M)$ would be contained in $\mathcal A$.  There
are no symplectic type normalizer maximal subgroups in
$O^{-}(n,q)$, so there are no $2.5$ type maximal subgroups. This
leaves field extension examples of type $2.4$. The possibilities
are subgroups of type $GU(n/2,q)$, $O^{-}(n/2,q^2)$, and
$O^{-}(n/r,q^r)$ for $r$ a prime and $e/r \ge 4$. All of these are
almost simple modulo scalars and $(x,M)$ would be contained in
${\mathcal A}$. Thus, $(x,G)$ cannot be a minimal counterexample.

\section{$E_l(q)$}

Now suppose that $G_0$ is an exceptional group of type $E_l(q)$,
for $l=6$,$7$, or $8$. If $(x,G)$ is a minimal counterexample then
by Lemmas \ref{Outer} and \ref{genunipotents} either $x \in G_0$
and $p=q=3$, or $x \in {\rm Inndiag}(G_0)$ and
$p \nmid q$. \\
\indent First suppose that $p=q=3$. If $x$ is a long root element
then $\left\langle x, x^g \right\rangle$ is either a $3$-group or
a fundamental $SL(2,3)$ subgroup, by \cite[Proposition
3.2.9]{GLS}. The unipotent conjugacy classes are described in
\cite{Mizuno1,Mizuno2}. Tables \ref{table:E63}, \ref{table:E73},
and \ref{table:E83} list the representatives for the unipotent
classes of order $3$ in $E_l(3)$, and describe a subsystem
subgroup $H$ containing each representative. The tables show that
there are no minimal counterexamples when $x$ is unipotent.
\begin{small}
\begin{landscape}
\begin{table}
    \begin{minipage}{\textwidth}
      \begin{center}
        \begin{tabular}{|l|l|l|}
          \hline
          Representative in $E_6(3)$& Roots generating & Subsystem  \\
          × & subsystem & type  \\
          \hline
          $x_{100000}(1)$ \footnote{In this case, $x$ is a long root element in $A_5(3)$
            and so we
            can find $g_1,g_2$ such that $\langle x, x^{g_1}, x^{g_2}\rangle$ is
not
            solvable} & $\{100000,001000,000100,000010,000001\}$
          &$A_5(q)$  \\
          $x_{100000}(1)x_{001000}(1)$& $\{100000,001000,000100,000010,000001\}$
&
          $A_5(q)$ \\
          $x_{100000}(1)x_{000100}(1)$ &
$\{100000,001000,000100,000010,000001\}$ &
          $A_5(q)$  \\
          $x_{100000}(1)x_{001000}(1)x_{000010}(1)$ &
          $\{100000,001000,000100,000010,000001\}$ &
          $A_5(q)$  \\
          $x_{100000}(1)x_{000100}(1)x_{000001}(1)$ &
          $\{100000,001000,000100,000010,000001\}$ &
          $A_5(q)$  \\
          $x_{100000}(1)x_{001000}(1)x_{000010}(1)x_{000001}(1)$ &
          $\{100000,001000,000100,000010,000001\}$ &
          $A_5(q)$ \\
          $x_{100000}(1)x_{001000}(1)x_{001000}(1)x_{000010}(1)$ &
          $\{100000,001000,000100,000010,010000\}$ &
          $D_5(q)$  \\
          $x_{100000}(1)x_{001000}(1)x_{000010}(1)x_{000001}(1)x_{010000}(1)$ &
          $\{100000,010000,001000,000010,000001\}$ &
          $A_2(q)A_2(q)A_1(q)$  \\
          $x_{100000}(1)x_{000100}(1)x_{000001}(1)x_{122321}(1)$ &
          $\{100000,000100,000010,000001,122321\}$ &
          $A_1(q)A_3(q)A_1(q)$ \\
          \hline
        \end{tabular}
      \end{center}
    \end{minipage}
\vspace{+1mm} \caption{\label{table:E63} Conjugacy classes in
$E_6(3)$ of elements of order $3$}
  \end{table}
\end{landscape}

\begin{table}
  \begin{minipage}{\textwidth}
    \begin{center}
      \begin{tabular}{|l|l|l|}
\hline
        Representative in $E_7(3)$& Roots generating & Subsystem  \\
        × & subsystem & type  \\
\hline
        $x_{34}(1)x_{36}(1)x_{37}(1)x_{38}(1)x_{40}(1)$ &
        $\alpha_{34},\alpha_{40},\alpha_{36},\alpha_{38},\alpha_{37}$ &
        $A_2(q)A_2(q)A_1(q)$ \\
        $x_{34}(1)x_{36}(1)x_{38}(1)x_{40}(1)$ &
        $\alpha_{34},\alpha_{40},\alpha_{36},\alpha_{38}$ &
        $A_2(q)A_2(q)$ \\
        $x_{37}(1)x_{38}(1)x_{39}(1)x_{40}(1)x_{41}(1)$ &
        $\alpha_{37},\alpha_{38},\alpha_{39},\alpha_{40},\alpha_{41}$ &
        $A_1(q)^2A_2(q)A_1(q)$ \\
        $x_{42}(1)x_{43}(1)x_{44}(1)x_{45}(1)$ &
        $\alpha_{42},\alpha_{45},\alpha_{43},\alpha_{44}$ &
        $A_2(q)A_1(q)A_1(q)$ \\
        $x_{44}(1)x_{46}(1)x_{49}(1)$ &
        $\alpha_{44},\alpha_{46},\alpha_{49}$ &
        $A_2(q)A_1(q)$ \\
        $x_{42}(1)x_{43}(1)x_{44}(1)x_{51}(\zeta)x_{49}(1)$ &
        $\alpha_{3},\alpha_{5},\alpha_{7},\alpha_{38},\alpha_{49}$ &
        $D_{4}(q)A_1(q)$ \\
        $x_{44}(1)x_{46}(1)$ &
        $\alpha_{44},\alpha_{46}$ & $A_2(q)$ \\
        $x_{42}(1)x_{43}(1)x_{44}(1)x_{51}(\zeta)$ &
        $\alpha_{3},\alpha_{5},\alpha_{7},\alpha_{38}$ &
        $D_{4}(q)$ \\
        $x_{47}(1)x_{48}(1)x_{49}(1)x_{53}(1)$ &
        $\alpha_{3},\alpha_{5},\alpha_{44},\alpha_{53},\alpha_{49}$ &
        $A_{3}(q)A_1(q)A_1(q)$ \\
        $x_{47}(\zeta)x_{48}(1)x_{49}(1)x_{53}(1)$ &
        $\alpha_{3},\alpha_{5},\alpha_{44},\alpha_{53},\alpha_{49}$ &
        $A_{3}(q)A_1(q)A_1(q)$ \\
        $x_{47}(1)x_{48}(1)x_{49}(1)$ &
        $\alpha_{3},\alpha_{5},\alpha_{44},\alpha_{49}$ &
        $A_{3}(q)A_1(q)$ \\
        $x_{47}(\zeta)x_{48}(1)x_{49}(1)$ &
        $\alpha_{3},\alpha_{5},\alpha_{44},\alpha_{49}$ &
        $A_{3}(q)A_1(q)$ \\
        $x_{53}(1)x_{54}(1)x_{55}(1)$ &
        $\alpha_{2},\alpha_{7},\alpha_{50},\alpha_{55}$ &
        $A_{3}(q)A_1(q)$ \\
        $x_{58}(1)x_{59}(1)$ &
        $\alpha_{2},\alpha_{5},\alpha_{57}$ &
        $A_{3}(q)$ \\
        $x_{63}(1)$ \footnote{In this case, $x$ is a long root element in $A_2(3)$ and so
          we
          can find $g_1,g_2$ such that $\langle x, x^{g_1}, x^{g_2}\rangle$ is
not
          solvable} &
        $\alpha_{1},\alpha_{62}$ &
        $A_{2}(q)$ \\
\hline
      \end{tabular}
    \end{center}
\vspace{+1mm} \caption{\label{table:E73}Conjugacy classes in
$E_7(3)$ of elements of order $3$}
  \end{minipage}
\end{table}

\begin{landscape}
  \begin{table}[htp]
    \centering
    \begin{minipage}{\textwidth}
      \begin{tabular}{|l|l|l|}
        \hline
Representative in $E_8(3)$& Roots generating & Subsystem  \\
        × & subsystem & type  \\
\hline
        $x_{53}(1)x_{54}(1)x_{55}(1)x_{117}(1)x_{118}(1)x_{119}(1)$ &

$\alpha_{53},\alpha_{119},\alpha_{54},\alpha_{55},\alpha_{117},\alpha_{118}$
&
        $A_2(q)^2A_1(q)^2$ \\
        $x_{56}(1)x_{57}(1)x_{117}(1)x_{118}(1)x_{119}(1)$ &
        $\alpha_{56},\alpha_{57},\alpha_{117},\alpha_{118},\alpha_{119}$ &
        $A_2(q)A_2(q)A_1(q)$ \\
        $x_{56}(1)x_{57}(1)x_{117}(1)x_{118}(1)$ &
        $\alpha_{56},\alpha_{57},\alpha_{117},\alpha_{118}$ &
        $A_2(q)A_2(q)$ \\
        $x_{53}(1)x_{54}(1)x_{55}(1)x_{117}(1)x_{124}(\zeta)x_{122}(1)$ &

$\alpha_{53},\alpha_{122},\alpha_{54},\alpha_{55},\alpha_{117},\alpha_{124}$
&
        $A_2(q)A_1(q)^4$ \\
        $x_{58}(1)x_{59}(1)x_{123}(1)x_{124}(1)x_{125}(1)$ &
        $\alpha_{58},\alpha_{59},\alpha_{123},\alpha_{124},\alpha_{125}$ &
        $A_1(q)A_2(q)A_1(q)A_1(q)$ \\
        $x_{60}(1)x_{126}(1)x_{127}(1)x_{128}(1)$ &
        $\alpha_{60},\alpha_{126},\alpha_{127},\alpha_{128}$ &
        $A_2(q)A_1(q)A_1(q)$ \\
        $x_{63}(1)x_{127}(1)x_{130}(1)$ &
        $\alpha_{63},\alpha_{127},\alpha_{130}$ &
        $A_1(q)A_2(q)$ \\
        $x_{63}(1)x_{126}(1)x_{127}(1)x_{128}(1)x_{133}(\zeta)$ &
        $\alpha_{2},\alpha_{5},\alpha_{7},\alpha_{124},\alpha_{63}$ &
        $D_4(q)A_1(q)$ \\
        $x_{63}(1)x_{135}(1)x_{136}(1)x_{137}(1)$ &
        $\alpha_{1},\alpha_{101},\alpha_{62},\alpha_{136},\alpha_{137}$ &
        $A_3(q)A_1(q)A_1(q)$ \\
        $x_{127}(1)x_{130}(1)$ &
        $\alpha_{124},\alpha_{2},\alpha_{5},\alpha_{7}$ &
        $D_4(q)$ \\
        $x_{126}(1)x_{127}(1)x_{128}(1)x_{133}(\zeta)$ &
        $\alpha_{2},\alpha_{5},\alpha_{7},\alpha_{124}$ &
        $D_4(q)$ \\
        $x_{141}(1)x_{142}(1)x_{143}(1)$ &
        $\alpha_{1},\alpha_{6},\alpha_{135},\alpha_{143}$ &
        $A_3(q)A_1(q)$ \\
        $x_{150}(1)x_{151}(1)$ &
        $\alpha_{3},\alpha_{2},\alpha_{148}$ &
        $A_3(q)$ \\
        $x_{157}(1)$ \footnote{In this case, $x$ is a long root element in $A_2(3)$ and
          so there exist $g_1,g_2$ such that $\langle x, x^{g_1}, x^{g_2}\rangle$
          is not solvable} & $\alpha_{8},\alpha_{156}$ & $A_2(q)$ \\
        \hline
      \end{tabular}
    \end{minipage}
\vspace{+1mm} \caption{\label{table:E83} Conjugacy classes in
$E_8(3)$ of elements of order $3$}
  \end{table}
\end{landscape}
\end{small}
The only other possibility is that $p \nmid q$ and  $x \in
\mathrm{Inndiag}(G_0)$. By Lemma \ref{InnDiag}, it suffices to
assume that $G=\textrm{Inndiag}(G_0)$. If $x$ is semisimple then
consider the case where $x$ is contained in a parabolic subgroup.
By Lemma \ref{2.2}, there is a conjugate of $x$ that is contained
in a maximal parabolic that does not centralize any component of
the Levi complement. For $l=6$, $P$ will be of type $D_5(q)$,
$A_1(q)A_4(q)$, or $A_5(q)$, and so $(x,G)$ cannot be a minimal
counterexample. Similarly, for $l=7$ and $8$ one can reduce to a
case where $x$ is acting non-centrally on a component of a Levi
complement. So it suffices to assume that $x$ is not contained in
any parabolic subgroups. In this case, $x$ is semisimple, and
$C_G(x)$ is a reductive group containing no unipotent elements.
Thus, $C_G(x)$ is a torus, and by \cite{Seitz} for example, it
follows that $|C_G(x)| \le (q+1)^l$. The conjugacy classes of
semisimple elements of order $3$ are described in \cite[Table
4.7.3A]{GLS}. So if $x$ is not contained in a parabolic subgroup
then it suffices to assume that $p \ge 5$, since $|C_G(x)| >
(q+1)^l$ for any $x \in E_l(q)$ of order $3$. This observation is
useful since it implies that $x \in O_{\infty}(M)$ for all maximal
subgroups $M$ containing $x$, otherwise $(x,G)$ could not be a
minimal counterexample. If $l=6$ then
\begin{displaymath}
  |G|/|C_G(x)|^2 \ge
  \frac{q^{36}(q^{12}-1)(q^9-1)(q^8-1)(q^6-1)(q^5-1)(q^2-1)}{3(q+1)^{12}},
\end{displaymath}
which is at least $q^{55}$, for $q \ge 2$. The maximal subgroups of
$E_6(q)$ are described in \cite{LSe1} and \cite{LSS}. The possible
maximal subgroups $X_i$ containing $x$ are listed in Table
\ref{table:E6q} together with a crude bound on $|x^G\cap X_i|$.
Clearly the hypotheses of Lemma \ref{count} are satisfied and
there is no minimal counterexample when $l=6$. \par
\begin{table}
\begin{center}
  \begin{tabular}{|l|l|l|}
\hline
    $X_i$   & Bound on $|x^G\cap X_i|$  & Cruder bound \\
\hline
    $d.(L(2,q) \times L(6,q)).de$ & $0$ & $0$ \\
    $e.L(3,q)^3.e^2.S_3$ & $0$ & $q^9$ \\
    $f.(L(3,q^2) \times U(3,q)).g.2$ & $0$ & $0$ \\
    $L(3,q^3).(e \times 3)$ & $0$ & $0$ \\
    $d^2.(P\Omega^{+}(8,q) \times (q-1/d)^2).d^2.S_3$ & $q^2$ & \\
    $({^3}D_4(q) \times (q^2+q+1)).3$ & $q^2+q+1$ & $q^3$ \\
    $h.(P\Omega^{+}(10,q) \times (q-1/h)).h$ & $h(q-1)$ & $q^3$ \\
    $(q-1)^6.W(E_6)$, $q \ge 5$ & $(q-1)^6.51840$ & $q^{13}$ \\
    $(q^2+q+1)^3.3^{1+2}SL(2,3)$ & $(q^2+q+1)^3$ & $q^{9}$ \\
    $3^{3+3}.SL(3,3)$ & $0$ &  \\
\hline
\end{tabular}
\vspace{+1mm} \caption{\label{table:E6q} Bounds on $|x^G\cap X_i|$
for subgroups $X_i$ of $E_6(q)$, $p \ge 5$. $d=(2,q-1)$,
$e=(3,q-1)$, $f=(3,q+1)$, $g=(3,q^2-1)$, $h=(4,q-1)$}
\end{center}
 \end{table}
Now suppose that $l=7$. Then
\begin{displaymath}
  \frac{|G|}{|C_G(x)|^2} \ge
  \frac{q^{63}(q^{18}-1)(q^{14}-1)(q^{12}-1)(q^{10}
    -1)(q^8-1)(q^6-1)(q^2-1)}{2(q+1)^ { 14 }},
\end{displaymath}
which is at least $q^{111}$ for $q \ge 2$.
\begin{table}
  \begin{center}
  \begin{tabular}{|l|l|}
\hline
    $X_i \le E_7(q)$   & Bound on $|x^G\cap X_i|$    \\
    \hline
    $d.(L(2,q) \times P\Omega^{+}(12,q)).d$ & 0  \\
    $f.L^{\epsilon}(8,q).g.(2 \times (2/f)$, $\epsilon=+1$  & 0\\
    $f.L^{\epsilon}(8,q).g.(2 \times (2/f)$, $\epsilon=-1$  & 0\\
    $e.L^{\epsilon}(3,q) \times L^{\epsilon}(6,q).de.2$, $\epsilon=+1$ & 0\\
$e.L^{\epsilon}(3,q) \times L^{\epsilon}(6,q).de.2$, $\epsilon=-1$ & 0\\
    $d^2.(L(2,q)^3 \times P\Omega^{+}(8,q)).d^3.S_3$ & 0 \\
    $(L(2,q^3) \times {^3}D_4(q)).3d$ & 0\\
    $d^3.(L(2,q)^7.d^4.L(3,2))$  & 0\\
    $L(2,q^7).7d$ & 0\\
    $e.(E_6(q) \times (q-1)/e).e.2, \epsilon=1$ & $q$\\
    $e.({^2}E_6(q) \times (q+1)/e).e.2, \epsilon =-1$ & $q^2$\\
    $(q-1)^7.W(E_7)$ & $q^{30}$\\
    $(q+1)^7.W(E_7)$ & $q^{37}$\\
    $(2^2 \times P\Omega^{+}(8,q).2^2).S_3$ & 0\\
    ${^3}D_4(q).3$ & 0\\
\hline
  \end{tabular}
\end{center}
\vspace{+1mm} \caption{\label{table:E7q}Bounds on $|x^G\cap X_i|$
for subgroups $X_i$ of $E_7(q)$, $p \ge 5$. $d=(2,q-1)$,
$e=(3,q-\epsilon)$, $f=(4,q-\epsilon)/d$, $g=(8,q-\epsilon)/d$}
\end{table}
 Table \ref{table:E7q} implies that the hypotheses of Lemma
\ref{count} are satisfied, and there is no minimal counterexample
when $l=7$. If $l=8$ then
\begin{displaymath}
  \begin{split}
   \frac{|G|}{|C_G(x)|^2}  \ge
   \frac{q^{120}(q^{30}-1)(q^{24}-1)(q^{20}-1)(q^{18}-1)(q^{14}-1)(q^{12}
      -1)(q^8-1)(q^2-1)}{2(q+1)^{16}},
  \end{split}
\end{displaymath}
which is at least $q^{239}$ for $q \ge 2$.
\begin{table}
\begin{center}
  \begin{tabular}{|l|l|}
\hline
    $X_i \le E_8(q)$   & Bound on $|x^G\cap X_i|$   \\
    \hline
    $d.P\Omega^{+}(16,q).d$ & $0$ \\
    $d.(L(2,q)\times E_7(q)).d$ & $0$  \\
    $f.(L^{\epsilon}(9,q)).e.2$, $\epsilon=+1$ & $0$ \\
    $f.(L^{\epsilon}(9,q)).e.2$, $\epsilon=-1$ & $0$ \\
    $e.(L^{\epsilon}(3,q)\times E_6^{\epsilon}(q)).e.2$, $\epsilon=+1$ &$0$  \\
    $e.(L^{\epsilon}(3,q)\times E_6^{\epsilon}(q)).e.2$, $\epsilon=-1$ &$0$  \\
    $g.(L^{\epsilon}(5,q))^2.g.4, \epsilon=+1$ & $5$  \\
    $g.(L^{\epsilon}(5,q))^2.g.4, \epsilon=-1$ & $5$  \\
    $SU(5,q^2).4$ & $0$ \\
    $PGU(5,q^2).4$ & $0$ \\
    $d^2.(P\Omega^{+}(8,q))^2.d^2.(S_3 \times 2)$ & $0$ \\
    $d^2.(P\Omega^{+}(8,q^2)).(S_3 \times 2)$ & $0$ \\
    $({^3}D_4(q))^2.6$ & $0$ \\
    $({^3}D_4(q^2)).6$ & $0$  \\
    $e^2.L^{\epsilon}(3,q)^4.e^2.GL(2,3)$, $\epsilon=+1$ & $0$ \\
    $e^2.L^{\epsilon}(3,q)^4.e^2.GL(2,3)$, $\epsilon=-1$ & $0$ \\
    $U(3,q^2)^2.8$ & $0$  \\
    $U(3,q^4).8$ & $0$  \\
    $d^4.L(2,q)^8.d^4.AGL(3,2)$, $q>2$ & $0$  \\
    $(q-1)^8.W(E_8)$ & $q^{46}$  \\
    $(q+1)^8.W(E_8)$ & $q^{46}$  \\
    $(q^4+q^3+q^2+q+1)^2.(5 \times SL(2,5))$ & $5q^{10}$  \\
    $(q^2+q+1)^4.2.(3 \times U(4,2))$ & $q^{15}$ \\
    $(q^2+1)^4.(4 \circ 2^{1+4}).A_6.2$ & $q^{12}$  \\
    $q^8+q^7-q^5-q^4-q^3+q+1.\mathbb{Z}_{30}$ & $q^{15}$  \\
    $(q^4-q^2+1)^2.(\mathbb{Z}_{12}\circ GL(2,3))$ & $q^{10}$ \\
    $(q^8-q^7+q^5-q^4+q^3-q+1).\mathbb{Z}_{30}$ & $q^{15}$  \\
    $(q^4-q^3+q^2-q+1)^2.(5 \times SL(2,5))$ & $5q^{10}$  \\
    $(q^2-q+1)^4.2.(3 \times U(4,2))$ & $q^{12}$ \\
    $2^{5+10}.SL(5,2)$ (exotic) & $0$  \\
    $5^{3}.SL(3,5)$ (exotic) & $q^{7}$ \\
\hline
  \end{tabular}
\end{center}
\vspace{+1mm} \caption{\label{table:E8q}Bounds on $|x^G \cap X_i|$
for $X_i$ a subgroup of $E_8(q)$, $p \ge 5$. $d=(2,q-1)$,
$e=(3,q-\epsilon)$, $f=(9,q-\epsilon)/e$, $g=(5,q-\epsilon)$}
\end{table}
Table \ref{table:E8q} shows that the hypotheses of Lemma
\ref{count} are satisfied, and $(x,G)$ cannot be a minimal
counterexample.

\section{${^2}E_6(q)$}

If $x$ is unipotent then Lemma \ref{genunipotents} implies that
$p=3$. For $q=3$, the unipotent class representatives were
obtained from Frank L\"ubeck, using CHEVIE (\cite{CHEVIE}). From
the class representatives, one can deduce that $(x,G)$ cannot be a
minimal counterexample. If $x$ is semisimple and contained in a
maximal parabolic subgroup then, by Lemma \ref{2.2}, it suffices
to assume that $x$ acts non centrally on all of the components of
the Levi complement. If this parabolic is an end node parabolic
then the Levi complement is of type ${^2}D_4(q)$ or ${^2}A_5(q)$.
If $P$ is not an end-node parabolic then it can be either of type
$A_1(q^2)A_2(q)$ or $A_1(q)A_2(q^2)$. Thus, $(x,G)$ cannot be a
minimal counterexample if $x$ is contained in a parabolic subgroup.\\
\indent So suppose that $x$ is semisimple, and does not lie in any
parabolic subgroups. Then $C_G(x)$ is a torus, and as in the
previous section, note that if $x$ has order $3$ then $|C_G(x)| >
(q+1)^6$ (by \cite[Table 4.7.3A]{GLS}). Thus, by \cite{Seitz}, it
suffices to assume that $p \ge 5$. Moreover,
\begin{displaymath}
  |G|/|C_G(x)|^2 \ge \frac{q^{36}
    (q^{12}-1)(q^9+1)(q^8-1)(q^6-1)(q^5+1)(q^2-1)}{3(q+1)^{12}},
\end{displaymath}
which is at least $q^{55}$ for $q \ge 2$. The possible maximal
subgroups containing $x$ are given in Table \ref{table:2E6q}.
\begin{table}
\begin{center}
  \begin{tabular}{|l|l|l|}
\hline
  $X_i$   & Bound on $|x^G\cap X_i|$  &  Cruder Bound \\
\hline
  $d.(L(2,q) \times U(6,q).de$  & $0$ &  \\
  $e.(U(3,q)^3.e^2.S_3$ & $0$ &$0$  \\
  $f.L(3,q^2) \times L(3,q).g.2$ & $0$ &  \\
  $U(3,q^3).(e \times 3)$ & $0$ &  \\
  $d^2.(P\Omega^{+}(8,q) \times (q+1/d)^2).d^2.S_3$ & $(q+1)^2$ &
  \\
  $({^3}D_4(q) \times (q^2-q+1)).3$ & $(q^2-q+1)$ &  \\
  $h.(P\Omega^{-}(10,q) \times (q+1/h)).h$ & $(q+1)$ & $q^2$ \\
  $(q+1)^6.W(E_6)$, $q \ge 5$ & $(q+1)^6.51840$ & $q^{14}$ \\
  $(q^2-q+1)^3.3^{1+2}SL(2,3)$ & $(q^2-q+1)^3$ & $q^{6}$ \\
  $3^{3+3}.SL(3,3)$ & $0$ &  \\
\hline
\end{tabular}
\end{center}
\vspace{+1mm} \caption{\label{table:2E6q}Bounds on $|x^G\cap X_i|$
for subgroups $X_i$ of ${^2}E_6(q)$, $p \ge 5$.  $d=(2,q-1)$,
$e=(3,q+1)$, $f=(3,q-1)$, $g=(3,q^2-1)$, $h=(4,q+1)$}
\end{table}
Again, the hypothesis of Lemma \ref{count} holds and $(x,G)$
cannot be a minimal counterexample.

\section{$F_4(q)$}

Observe that $\rm{Inndiag}(G_0)=G_0$. If $x$ is unipotent then
$q=3$, and \cite{Lawther} and \cite{Shoji} contain representatives
for the classes of elements of order $3$. They are listed in Table
\ref{table:F43} together with subsystem overgroups of $x$ that
show that $(x,G)$ cannot be a minimal counterexample.\\
\begin{table}
  \begin{minipage}{\textwidth}
    \centering
\begin{tabular}{|l|l|l|}
      \hline
Representative in $F_4(3)$& Roots generating & Subsystem  \\
      × & subsystem & type  \\
\hline
      $x_1=x_{1+2}(1)$ \footnote{In this case, $x$ is a long root element in $A_2(3)$ and so
      there exist $g_1,g_2$ such that $\langle x, x^{g_1}, x^{g_2}\rangle$ is not
      solvable} & $1,1+3$ & $A_2(3)$ \\
      $x_2=x_{1-2}(1)x_{1+2}(-1)$ & $1-2,2-3,3-4,4$ & $B_4(3)$ \\
      $x_3=x_{1-2}(1)x_{1+2}(-\eta)$ & $1-2,2-3,3-4,4$ & $B_4(3)$ \\
      $x_4=x_{2}(1)x_{3+4}(1)$ & $2-3,3-4,4$ & $B_3(3)$ \\
      $x_5=x_{2-3}(1)x_{4}(1)x_{2+3}(1)$ & $2-3,3-4,4$ & $B_3(3)$ \\
      $x_6=x_{2-3}(1)x_{4}(1)x_{2+3}(\eta)$ & $2-3,3-4,4$ & $B_3(3)$ \\
      $x_7=x_{2}(1)x_{1-2+3+4}(1)$ & $2,1-2+3+4$ & $A_2(3)$ \\
      $x_8=x_{2-3}(1)x_{4}(1)x_{1-2}(1)$ & $2-3,1-2,4$ & $A_2(3)A_1(3)$ \\
      $x_{11}=x_{2+3}(1)x_{1+2-3-4}(1)x_{1-2+3+4}(1)$ & $2+3,1+2-3-4,$  &
      $A_1(3)A_2(3)$ \\
      × & $1-2+3+4$ & × \\
\hline
    \end{tabular}
  \end{minipage}
\vspace{+1mm} \caption{\label{table:F43}Conjugacy class
representatives in $F_4(3)$}
\end{table}
 \indent If $x$ is semisimple and contained in a parabolic
 subgroup then Lemma \ref{2.2} implies that $x$ acts non-trivially
 on all of the components of the Levi complement of some parabolic
 $P$. If $P$ is an end node parabolic subgroup the Levi complement
 is of type $B_3(q)$ or $C_3(q)$. If $P$ is not an end node
 parabolic subgroup then $P$ is of type $A_1(q) A_2(q)$. It
 suffices to assume that $x$ does not centralize the $A_1(q)$ or
 $A_2(q)$ components so $(x,G)$ cannot be a minimal
 counterexample. Now suppose that $x$ does not lie in any
 parabolic subgroups. Then $|C_G(x)| \le (q+1)^4$ by \cite{Seitz}.
 However, by \cite[Table 4.7.3A]{GLS}, this condition implies that
 $p \not= 3$. So suppose that $p \ge 5$ and note that
\begin{displaymath}
  |G|/|C_G(x)|^2 \ge q^{24}(q^{12}-1)(q^{8}-1)(q^{6}-1)(q^{2}-1)/(q+1)^{8},
\end{displaymath}
which  is at least $q^{38}$ for $q \ge 2$.
\begin{table}
\begin{center}
  \begin{tabular}{|l|l|}
    \hline
    Type of $X_i$ in $G$    & Bound on $|x^G\cap X_i|$   \\
    with $G_0= F_4(q)$ &  \\
    \hline
    $2.(L(2,q) \times PSp(6,q)).2$, $q$ odd & $0$  \\
    $d.\Omega(9,q)$, $(2,q_0)$ classes & $0$\\
    $d^2.P\Omega^{+}(8,q).S_3$, $(2,q_0)$  classes & $0$  \\
    ${^3}D_4(q).3$ $(2,p)$ classes & $0$  \\
    $e.(L^{\epsilon}(3,q) \times L^{\epsilon}(3,q)).e.2$, $\epsilon=+1$ &
    $e^2$\\
    $e.(L^{\epsilon}(3,q) \times L^{\epsilon}(3,q)).e.2$, $\epsilon=-1$ &
    $e^2$\\
    $(Sp(4,q) \times Sp(4,q)).2$ & $0$  \\
    $Sp(4,q^2).2$ & $0$  \\
    $(q-1)^4.W(F_4)$ , $q=2^a$, $a>2$ & $q^{8}$ \\
    $(q+1)^4.W(F_4)$ , $q=2^a$, $a>1$ & $q^{11}$ \\
    $(q^2+q+1)^2.(3 \times SL(2,3))$, $q=2^a$ & $q^6$  \\
    $(q^2-q+1)^2.(3 \times SL(2,3))$, $q=2^a$, $a>1$ & $q^{6}$  \\
    $(q^2+1)^2.(\mathbb{Z}_{30} \circ GL(2,3))$, $q=2^a$, $a>1$ & $q^{9}$  \\
    $(q^4-q^2+1).\mathbb{Z}_{30}$, $q=2^a$, $a>1$ & $q^{7}$  \\
    $3^3.SL(3,3)$, $q_0 \ge 5$ & $0$  \\
    \hline
  \end{tabular}
\end{center}
\vspace{+1mm} \caption{\label{table:F4q}Bounds on $|x^G\cap X_i|$
for subgroups $X_i$ of $F_4(q)$, $p \ge 5$.
 $d=(2,q-1)$, $e=(3,q-\epsilon)$}
\end{table}
It is clear from Table \ref{table:F4q} that $(x,G)$ cannot be a
minimal counterexample in this case either.

\section{${^2}F_4(2^a)^{\prime}$, where $a$ is odd}

Suppose that $a>1$. Since $p \not= 2$, $x$ is semisimple . If $x$
is contained in a parabolic subgroup then Lemma \ref{2.2} can be
applied. If the resulting subgroup is an end node parabolic
subgroup then it will be of type ${^2}B_2(2^a)$ in which case
$(x,G)$ cannot be a minimal counterexample. If $P$ is not an end
node parabolic then the Levi complement will be of type
$A_1(2^{2a})$ so $(x,G)$ cannot be a minimal counterexample in
this case either. So suppose that $x$ is not contained in any
parabolic subgroups. Then $p \ge 5$, by the same argument as for
$F_4(q)$, and
\begin{displaymath}
  |G|/|C_G(x)|^2 \ge
  q^{12}(q^{6}+1)(q^{4}-1)(q^{3}+1)(q-1)/2(q+1)^{8}.
\end{displaymath}
This is at least $q^{15}$ for $q \ge 8$. The maximal subgroups are
given in \cite{2F4} and include the calculations in Table
\ref{table:2F4q}.
\begin{table}
\begin{center}
  \begin{tabular}{|l|l|}
    \hline
Type of $X_i$ in $G$ with $G_0= {^2}F_4(q)$   & Bound on $|x^G\cap X_i|$ \\
    \hline
    $SU(3,q).2$ & 0 \\
    $PGU(3,q).2$ & 0 \\
    $({^2}B_2(q) \times {^2}B_2(q)).2$ & 0  \\
    $Sp(4,q).2$& 0  \\
    $B_2(q):2$  & 0 \\
    ${^2}F_4(q_0)$  & 0 \\
    $(q+1)^2.GL(2,3)$ & $q^{4}$ \\
    $(q+ \sqrt{2q}+1)^2.(\mathbb{Z}_4 \circ GL(2,3))$ & $q^{4}$  \\
    $(q- \sqrt{2q}+1)^2.(\mathbb{Z}_4 \circ GL(2,3))$, $q>8$ & $q^2$  \\
    $(q^2+ \sqrt{2q^3} +q + \sqrt{2q}+1).\mathbb{Z}_{12}$ & $q^5$ \\
    $(q^2- \sqrt{2q^3} +q - \sqrt{2q}+1).\mathbb{Z}_{12}$ & $q^2$  \\
    \hline
  \end{tabular}
\end{center}
\vspace{+1mm} \caption{\label{table:2F4q}Bounds on $|x^G\cap X_i|$
for subgroups $X_i$ of ${^2}F_4(q)$, $p \ge 5$. $d=(2,q-1)$,
$e=(3,q-\epsilon)$}
\end{table}

Thus $(x,G)$ cannot be a minimal counterexample for $q \ge 8$. If
$a=1$ then $q=2$ and the possibilities for the order of $x$ are
$3$,$5$, and $13$. There are unique classes of cyclic subgroups of
order $3$, $5$, and $13$, by \cite{Atlas}, thus a conjugate of $x$
is contained in a subgroup isomorphic to $PSL(2,25)$. So $(x,G)$
cannot be a minimal counterexample in this case either. \\
\indent The only outer automorphisms are field
automorphisms. If $x$ is a field automorphism then $x$
normalizes an end node parabolic subgroup and acts
non-trivially on the Levi complement. Therefore,
$(x,G)$ cannot be a minimal counterexample.

\section{$G_2(q)$}

Observe that, since $G_2(2)^{\prime} \cong PSU(3,3)$, it suffices
to assume that $q \not= 2$. First consider the case where $q$ is a
power of $2$; so in particular, $x$ is semisimple. The algebraic
group $G_2$ fixes a non-degenerate quadratic form by
\cite[4.1]{SS} and \cite{LSS2} for example. It follows that any
element of $G_0$ is conjugate to an element of either $SL_3(q):2$
or $SU(3,q):2$. Either $x$ is non-central in one of these groups,
in which case $(x,G)$ is not a minimal counterexample, or $x$ is
central in $SL^{\epsilon}(3,q)$, and therefore is contained in a
parabolic subgroup $P$. In the latter case, applying Lemma
\ref{2.2} implies that it suffices to assume that $x$ acts
non-centrally on the Levi complement, which is of type $A_1(q)$.
So $(x,G)$ is not a minimal counterexample in this case
either.\par
 \indent Now suppose that $q$ is odd. If $x$ is
semisimple then, since it has odd order, it must be contained in
$SL^{\epsilon}(3,q)$ for either $\epsilon= +$ or $\epsilon=-$.
Thus if $x$ is not a central element in this subgroup then $(x,G)$
is not a minimal counterexample. If $x$ is central in the
$SL^{\epsilon}(3,q)$ then $x$ is contained in a parabolic subgroup
$P$. So by Lemma \ref{2.2}, it suffices to assume that $x$ acts
non-centrally on the Levi complement, which is of type $A_1(q)$.
Since $p \mid q-\epsilon$ and $q$ is odd, it follows that $q \ge 5$ and
$(x,G)$ cannot be a minimal counterexample in this case either.
Similarly, if $x$ is unipotent and $q\not= 3$ then Lemma \ref{2.2}
implies that $x$ acts non-trivially on a $A_1(q)$ Levi component
of a parabolic subgroup.\par
 \indent Suppose that $q=3=p$ and that $x$ is not a root element.
It is easily verified using MAGMA that there are two conjugacy
classes of elements of order $3$ (long root elements and short
root elements) that belong in Table \ref{exceptions}. Moreover, in
these cases, there exist $g_1, g_2 \in G_2(3)$ such that $\langle
x,x^{g_1},x^{g_2}\rangle$ is not solvable.

\section{${^3}D_4(q)$}

One can use MAGMA for the cases $q=2$ and $q=3$, so assume from
now on that $q \ge 4$. If $x$ is unipotent then, by Lemma
\ref{2.2}, it suffices to assume that $x$ acts non-centrally on a
Levi component of a parabolic subgroup of type $A_1(q)$, or
$A_1(q^3)$. So, since $q \ge 4$, $(x,G)$ cannot be a minimal
counterexample. Similarly, if $x$ is semisimple and is contained
in a parabolic subgroup then Lemma \ref{2.2} applies, as in the
unipotent case. So it suffices to assume that $x$ is not contained
in any parabolic subgroups. It follows that $C_G(x)$ is a torus
and $|C_G(x)| \le (q+1)^4$ by \cite{Seitz}. Thus,
\begin{displaymath}
\frac{|G|}{|C_G(x)|^2} \ge
\frac{q^{12}(q^{8}+q^4+1)(q^{6}-1)(q^{2}-1)}{(q+1)^{8}},
\end{displaymath}
which is at least $q^{18}$ for $q \ge 4$. As usual, observe that $p
\ge 5$ by \cite{GLS}. The possible maximal subgroups containing
$x$ can be deduced from \cite{3D4} and are listed in Table
\ref{table:3D4q} and Lemma \ref{count} shows that $(x,G)$ cannot
be a minimal counterexample.\par
\begin{table}
\begin{center}
  \begin{tabular}{|l|l|}
\hline
    Type of $X_i$ in $G$ with $G_0= {^3}D_4(q)$   & Bound on $|x^G\cap X_i|$ \\
    \hline
    $G_2(q)$ & $0$ \\
    $PGL^{\epsilon}(3,q)$, $q \equiv \epsilon$ (mod $3$) & $0$ \\
    ${^3}D_4(q_1)$, $q_1^{\alpha}=q$, $\alpha \not= 3 \textrm{ prime } $ & $0$
    \\
    $L(2,q^3) \times L(2,q), q_0=2$ & $0$  \\
    $(SL(2,q^3) \circ SL(2,q)).2$, $q_0 \not=2$ & 0  \\
    $((q^2+q+1) \circ SL(3,q)).(3,q^2+q+1).2$ & $q^3$  \\
    $((q^2-q+1) \circ SU(3,q)).(3,q^2-q+1).2$ & $q^2$ \\
    $(q^2+q+1)^2.SL(2,3)$ & $(q+1)^4$ \\
    $(q^2-q+1)^2.SL(2,3)$ & $q^4$  \\
    $(q^4-q^2+1).4$ & $q^4$   \\
\hline
  \end{tabular}
\end{center}
\caption{\label{table:3D4q}Bounds on $|x^G\cap X_i|$ for subgroups
$X_i$ of ${^3}D_4(q)$, $q \ge 4$, and $p \ge 5$.  $d=(2,q-1)$,
$e=(3,q-\epsilon)$, $f=(3,q^2+\epsilon q +1)$}
\end{table}

\section{${^2}B_2(2^a)$, $a \not=1$ odd}

If $a=1$ then ${^2}B_2(2^a)$ is solvable, so it suffices to assume
that $a \not=1$. The maximal subgroups are described in \cite{2B2}
and are listed in Table \ref{table:2B2q}. Note that
$|G|=q^2(q^2+1)(q-1)$ where $q=2^a$.
\begin{table}
\begin{center}
  \begin{tabular}{|l|l|l|}
   \hline
 Subgroup & Bound on $|x^G \cap M|$ & Comments \\
    \hline
    $H$  & $q^2(q-1)$ & Borel subgroup
    \\
    $D_{2(q-1)}$ & $2(q-1)$ & maximal rank\\
    $N(A_1)$  & $4(q+ \sqrt{2q}+1)$& maximal rank\\
    $N(A_2)$  & $4(q- \sqrt{2q}+1)$& maximal rank \\
    ${^2}B_2(2^{a/b}), b \mid a$, & $q^{2/b}(q^{2/b}+1)(q^{1/b}-1)$ & One class
    \cite[Theorem 10]{2B2}\\
\hline
  \end{tabular}
\end{center}
\caption{\label{table:2B2q}Maximal subgroups of ${^2}B_2(2^a)$}
\end{table}
Also, observe that $p \nmid q$ since $p$ is odd and it suffices to
assume that the only subfield subgroup that can contain $x$ is
${^2}B_2(2)$, since otherwise $(x,G)$ would not be a minimal
counterexample. By \cite[Theorem 4]{2B2}, for example, any element
of odd order in ${^2}B_2(q)$ has its centralizer contained in one
of the cyclic groups of order $q-1, q+ \sqrt{2q}+1$ and $q-
\sqrt{2q}+1$. So there are three mutually exclusive possibilities
for $p$: $p \mid q-1$, $p \mid q+ \sqrt{2q}+1$, and $p \mid q- \sqrt{2q}+1$. If
$p \mid q-1$ then
\begin{displaymath}
  |G|/|C_G(x)|^2 \ge
  q^{2}(q^{2}+1)(q-1)/(q-1)^{2}
\end{displaymath}
and
\begin{displaymath}
  \sum_{i} |x^G \cap X_i| \le q^2(q-1) + 2(q-1) + |{^2}B_2(2)|.
\end{displaymath}
An elementary calculation shows that since $q \ge 8$
\begin{displaymath}
  |G|/|C_{G}(x)|^2 > \sum_{i} |x^G \cap X_i|
\end{displaymath}
and Lemma \ref{count} applies. Similarly if $p \mid q \pm \sqrt{2q}+1$
then
\begin{displaymath}
  \sum_{i} |x^G \cap X_i|  \le 4(q^2 \pm\sqrt{2q}+1) + |{^2}B_2(2)|
\end{displaymath}
and the hypotheses of Lemma \ref{count} are satisfied. Thus,
$(x,G)$ cannot be a minimal counterexample.\\
\indent If $x$ is an outer automorphism then it must be a field
automorphism and the same counting argument as for $PSL(2,q)$
applies. Observe that it suffices to assume that there are no
subfield subgroups among the $H_i$'s except
${^2}B_2(q_0)=C_{G_0}(x)$. If there were, then $x$ would be
contained in $\rm{Aut}({^2}B_2(q^{1/r}))$ for some prime $r \not=
p$ and $(x,G)$ would not be a minimal counterexample. So
\begin{displaymath}
  |G_0|/|C_{G_0}(x)|^2 = q^2(q^2+1)(q-1)/q_0^4(q_0^2+1)^2(q_0-1)^2
\end{displaymath}
and
\begin{displaymath}
  \begin{split}
    1 + \sum_{i=1}^m \frac{|H_i|}{|C_{H_i}(x)|^2} \le &
    1+\frac{q^2(q-1)}{q_0^4(q_0-1)^2}+\frac{2(q-1)}{(q_0-1)^2}+ \\
    &
\frac{4(q+\sqrt{2q}+1)}{(q_0+\sqrt{2q_0}+1)^2}
+\frac{4(q-\sqrt{2q}+1)}{(q_0-\sqrt{2q_0}+1)^2}.
  \end{split}
\end{displaymath}
A computation shows that the required inequality holds for all $q
\ge 2^3$ and all $p \ge 3$.

\section{${^2}G_2(3^a)$, $a \not= 1$ odd}

Observe that if $a=1$ then ${^2}G_2^{\prime}(3) \cong L(2,8)$ so
suppose that $a \not= 1$. Also, $|G|=q^3(q^3 + 1)(q - 1)$ and the
maximal subgroups are given in \cite{G2}, which are listed in
Table \ref{table:2G2q}.
\begin{table}
\begin{center}
  \begin{tabular}{|l|l|}
\hline
Subgroup & Comments \\
    \hline
    $P=[q^3].(q-1)$ & Borel subgroup, only one class  \\
    $2 \times L(2,q), q \ge 27$ & maximal rank  \\
    $(2^2 \times D_{(q+1)/2}):3, q \ge 27$ & maximal rank\\
    $\mathbb{Z}_{q + \sqrt{3q}+1}:\mathbb{Z}_{6}$ & maximal rank \\
    $\mathbb{Z}_{q - \sqrt{3q}+1}:\mathbb{Z}_{6}, q \ge 27$ & maximal rank \\
    ${^2}G_2(q_0), q = q_0^{\alpha}, \alpha \textrm{ prime}$ & ×  \\
\hline
  \end{tabular}
\end{center}
\caption{\label{table:2G2q}Maximal subgroups of ${^2}G_2(3^a)$}
\end{table}
If $p \nmid q=3^a$ then are there three mutually exclusive
possibilities: $p \mid (q^2-1)$, $p \mid q - \sqrt{3q}+1$, and $p \mid q +
\sqrt{3q}+1$. First suppose that $p \mid q^2-1$. Then a Sylow
$p$-subgroup is contained inside a maximal subgroup $2 \times
PSL(2,q)$, so some conjugate of $x$ is contained in $PSL(2,q)$.
Thus, $(x,G)$
cannot be a minimal counterexample. \\
\indent If $p \mid q^2-q+1$ then a Sylow $p$-subgroup is contained in
one of the abelian Hall subgroups of order $q \pm \sqrt{3q}+1$, so
it suffices to assume that $x$ is contained in one of these Hall
subgroups and that $|C_G(x)| = q \pm \sqrt{3q}+1$ (see part (4) of
the main theorem in \cite{Ward}). Then an easy count shows that
the hypotheses of Lemma \ref{count} are satisfied. If $p \mid q$ then
\cite{Ward} shows that there are three conjugacy classes of
elements of order $p=3$. One class contains elements in the center
of a Sylow $3$-subgroup and these elements have centralizers of
order $q^3$. The other two conjugacy classes have centralizers of
order $2q^2$. Elements in these classes centralize an involution
$w$, so they are contained in $C_G(w) \cong L(2,q) \times 2$ and
so $(x,G)$ cannot be a minimal counterexample in this case. Now
\cite{Lawther} gives a representative $x_{2a+b}(1)x_{3a+2b}(1)$
for the conjugacy class of elements $t$ with $|C_G(t)|=q^3$. This
is contained in ${^2}G_2(3) \cong L(2,8):3 $, so $(x,G)$ cannot be
a minimal counterexample in this case either. If $x$ is an outer
automorphism then it must be a field automorphism.  The same
method as for ${^2}B_2(2^a)$ applies here. As before, it suffices
to assume that there are no subfield subgroups among the $H_i$'s,
other than ${^2}B_2(2^{a/p})$. So
\begin{displaymath}
  |G_0|/|C_{G_0}(x)|^2 = q^3(q^3+1)(q-1)/q_0^6(q_0^3+1)^2(q_0-1)^2
\end{displaymath}
and
\begin{displaymath}
  \begin{split}
    1 + \sum_{i=1}^m \frac{|H_i|}{|C_{H_i}(x)|^2} \le &
    1+\frac{q^3(q-1)}{q_0^6(q_0-1)^2}+\frac{6(q+1)}{(q_0+1)^2}+
 \frac{6(q+\sqrt{3q}+1)}{(q_0+\sqrt{3q_0}+1)^2} +
    \frac{6(q-\sqrt{3q}+1)}{(q_0-\sqrt{3q_0}+1)^2} + \\
  &  \frac{2q(q^2-1)}{q_0^2(q_0^2-1)^2}.
  \end{split}
\end{displaymath}
A computation now shows that $(x,G)$ cannot be a minimal
counterexample for any prime power $q$.

\section{Sporadic Groups}

If $G_0$ is one of the following sporadic groups then a MAGMA
calculation shows that there exists $g \in G$ such that $\langle
x, x^g \rangle$ is not solvable:
\begin{displaymath}
 \begin{split}
    M_{11} , M_{12}, M_{22}, M_{23}, & M_{24}, J_1, J_2, J_3, Co_2, \\
    Co_3,McL, HS, Suz, &He, Fi_{22}, Fi_{23}, Fi_{24}.
 \end{split}
\end{displaymath}
There are $9$ remaining sporadic groups, which are a little more
awkward. One can use \cite{Atlas}, which describes the conjugacy
classes and maximal subgroups. In certain circumstance, one can
show that some element of a conjugacy class is contained inside
some smaller almost simple group. In particular, one can do this
if there is a unique conjugacy class of elements of order $p$, or
a multiple of $p$ that powers up to the conjugacy class in
question. Then any almost simple subgroup containing elements of
this order will contain an element of $x^G$, and thus $(x,G)$
cannot be a minimal counterexample. Clearly, this also applies if
all of the conjugacy classes of elements of order $p$ are powers
of each other. In the remaining cases, one can use MAGMA with a
little more care. The details are listed in Tables \ref{table:J4},
\ref{table:Co1}, \ref{table:Ru}, \ref{table:ON}, \ref{table:HN},
\ref{table:Ly}, \ref{table:Th}, \ref{table:B}, and \ref{table:M}.

\begin{table}
\begin{minipage}{\textwidth}
\centering
\begin{tabular}{|c|c|c|c|}
  \hline
  \textrm{Class(es)}  & MAGMA  & $x$ contained in ""  \\
  &&due to power up \\
  \hline
  $3$   &   &  $M_{22}:2, 3$\\
  5   &   &  $M_{22}:2$, 5\\
  7   &   &  $M_{22}:2$, 7 \footnote{$7A=(7B)^3$, $7B=7A^3$}\\
  11A   &    & $PSU(3,11):2, 44$\\
  11B  & &  $\langle x,a \rangle$ generates\footnote{In this case, $a$ is a standard generator in class 2A; $x$ is a standard representative for class 11B; $x^3a$ has order 43 and
$x^2a$ has order 35, so $\langle x,a \rangle$ cannot be contained in any maximal subgroups}  \\
  23  & &    $2^{11}:M_{24}, 23$\\
  29  & &  $\langle x,a \rangle$ generates\footnote{In this case, $x$ is a standard representative for class 29A. We can show in MAGMA that the group order is a multiple of
29.44}\\
  31 & &   $L(2,32)$, $31$ \\
  37  & &  $U(3,11)$, $37$\\
  43 & & $\langle x,a \rangle$ generates\footnote{In this case, $x$ is a standard representative for class 43A; but a calculation in MAGMA shows that 43.23 divides the order of $\langle x,a \rangle$}\\
\hline
\end{tabular}
\vspace{+2mm} \caption{\label{table:J4}Janko group, $J_4$}
\end{minipage}
\end{table}

\begin{table}
 \centering
 \begin{tabular}{|c|c|cc|}
  \hline
  \textrm{Class(es)}  & MAGMA &  & $x$ contained in ""  \\
  &&&due to power up \\
  \hline
  $3$   & done & & \\
  5   & done &  &\\
  7A   & &   &$A_9, 42$\\
7B   & done &   & \\
  11  & &     & $Co_3 ,11$\\
  13  &    & &$3:Suz:2, 13$\\
  23  & &  & $Co_2, 23$ \\
  \hline
\end{tabular}
\vspace{+2mm}
 \caption{\label{table:Co1}Conway group, $Co_1$}
\end{table}

\begin{table}
\centering
 \begin{tabular}{|c|c|cc|}
  \hline
  \textrm{Class(es)}  & MAGMA &  & $x$ contained in ""  \\
  &&&due to power up \\
  \hline
  $3$   & done & & \\
  5   & done &  &\\
  7   & &   & $A_8,7$\\
  13  & &     &$PSL(2,13),13$ \\
  29  & &  &$PSL(2,29), 29$ \\
  \hline
\end{tabular}
\vspace{+2mm} \caption{\label{table:Ru}Rudvalis group, $Ru$}
 \end{table}

\begin{table}
\centering
\begin{tabular}{|c|c|cc|}
  \hline
  \textrm{Class(es)}  & MAGMA &   & $x$ contained in ""  \\
  &&&due to power up \\
  \hline
  $3$   & done & & \\
  5   &  &  &$A_7, 5$\\
  7A   & &  &$PSL(3,7),14$\\
7B  &done &   &\\
  11  & &     & $J_1, 11$\\
  19  & &     & $J_1, 19$ \\
  31  & &   & $PSL(2,31)$\\
  \hline
\end{tabular}
\vspace{+2mm} \caption{\label{table:ON}O'Nan group, $O'N$}
 \end{table}

\begin{table}
\begin{minipage}{\textwidth}
\centering
 \begin{tabular}{|c|c|cc|}
  \hline
  \textrm{Class(es)}  & MAGMA &   & $x$ contained in ""  \\
  &&&due to power up \\
  \hline
  3A   &  & & $A_{12}$,21A \\
  3B   &  &  &  $A_{12}$, 9\\
  5A   & &   & $A_{12}$, 35\\
5B-E &  done \footnote{MAGMA calculation performed using permutation representation in \cite{wwwatlas}} &   &
\\
  7   & &   &$A_{12},7$\\
  11  & &     & $A_{12},11$\\
  19  & &   & $PSU(3,8),19$\\
  \hline
\end{tabular}
\vspace{+2mm}
 \caption{\label{table:HN}Harada--Norton group, $HN$}
\end{minipage}
\end{table}

\begin{table}
\begin{minipage}{\textwidth}
\centering
\begin{tabular}{|c|c|cc|}
  \hline
  \textrm{Class(es)}  & MAGMA &   & $x$ contained in ""  \\
  &&&due to power up \\
  \hline
  3A   &  & & $2.A_{11}$,21A \\
  3B   &  &  &  $2.A_{11}$, 9\\
  5A   & &   & $2.A_{11}$ , 20 \\
  5B   &done \footnote{If $a$ and $x$  are standard representatives for classes 2A and 5B respectively then $ax^b$ has order 67 so $\left\langle a,x^b\right\rangle$
can not be contained in any maximal subgroup} &   & \\
  7   & &   &$2.A_{11},7$\\
  11  & &    &$2.A_{11},11$ \\
  31  & &    & $5^3.PSL(3,5),31$\\
  37  & done \footnote{If $a$ and $x$  are standard representatives for classes 2A and 37A respectively then ax has order 67 so $\left\langle a,x\right\rangle$ can
not be contained in any maximal subgroup}&   & \\
67  & done \footnote{If $a$ and $x$  are standard representatives for classes 2A and 67A respectively then ax has order 14 so $\left\langle
a,x\right\rangle$ can
not be contained in any maximal subgroup}& & \\
  \hline
\end{tabular}
\vspace{+2mm} \caption{\label{table:Ly}Lyons group, $Ly$}
\end{minipage}
 \end{table}

\begin{table}
\centering
\begin{tabular}{|c|cc|c|}
  \hline
  \textrm{Class(es)}  & MAGMA &  & $x$ contained in ""  \\
  &&&due to power up \\
  \hline
  3A   &  & &  $PSU(3,8)$, 21 \\
  3B   &  & &  $A_9$, 9\\
  3C   &  & &  $A_9$, 15\\
  5   & &   &  $A_9$, 5\\
  7   & &   & $A_9$, 7 \\
  13   & &   &$PSL(3,3)$, 13\\
  19  & &     & $PSL(2,19)$,19\\
  31  & &     &$2^5.PSL(5,2), 31$ \\
  \hline
\end{tabular}
\vspace{+2mm} \caption{\label{table:Th}Thompson Group, $Th$}
  \end{table}

\begin{table}
\begin{minipage}{\textwidth}
\centering
\begin{tabular}{|c|cc|c|}
  \hline
  \textrm{Class(es)}  & MAGMA &  & $x$ contained in ""  \\
  &&&due to power up \\
  \hline
  3A   &  & &$HN, 21$  \\
  3B   & &   & $HN,9$ \\
  5A   & &   & $HN, 35$\\
  5B   & &    & $HN$, \footnote{The order of $C_B(x)$ is a multiple of $5^6$, but $5^5 \nmid C_B(y)$ if $y \in 5A$, so any member of the class 5B in $HN$
(centralizer
order $500,000$) must be in the Baby Monster class 5B}\\
  7  & &     & $PSL(2,49)$, 7 \\
  11  & &     & $PSL(2,11)$, 11 \\
  13  & &   & $PSL(3,3)$, 13\\
  17  & &   &$PSL(2,17)$, 17\\
  19 & &   & $HN, 19$\\
  23 & &  &$FI_{23}, 23$\\
  31& &  & $PSL(2,31)$, 31\\
  47 & done \footnote{Since $ax$ has order 9, where $a$ and $x$ are standard representatives for classes 2A and 47A respectively, $\left\langle a,x \right\rangle$ generates since the only maximal
subgroup with order a multiple of $47$ is $47:23$}& &\\
  \hline
\end{tabular}
\vspace{+2mm} \caption{\label{table:B}Baby Monster, $B$}
\end{minipage}
\end{table}

\begin{table}
\begin{minipage}{\textwidth}
\centering
\begin{tabular}{|c|c|c|}
  \hline
  \textrm{Class(es)}  &   $|C_G(x)|$  & $x$ contained in ""  \\
  && due to power up \\
  \hline
  3A   &   & $B, 48$  \\
  3B   &   & $A_{12}, 9$ \\
  3C   &    & $PSU(3,8) \times A_5, 57$\\
  5A   &    & $PSL(2,11) \times M_{12},55$\\
  5B  &     & $PSL(2,25),25$\\
  7A  &    & $(A_5 \times A_{12}),105$\\
7B  &      & contained in $PSL(2,71)$ group by
\cite[pg 596]{anatomy}\\
  11  &       & $2.B, 11$ \\
13A  & 73008&     $S_3 \times Th$ \footnote{Since an element of
order 13 in $Th$ has centralizer order 39, it follows that any
such element is in 13A since $39.6$ divides $|C_G(x)|$ but does
not divide $52728$.}
\\
13B  & 52728    & Lies in $6.Suz$ by \cite[pg 593]{anatomy}\\
17  & &       $2.B, 17$\\
19 & &       $2.B, 19$\\
23  & &       $2.B, 23$\\
29  & &       $3.Fi_{24}, 29$ \\
31  & &       $2.B, 31$ \\
41 &  & $3^8.O^{-}(8,3), 41$ \\
 47 && $2.B, 47$ \\
 59 && $PSL(2,59), 59$ \\
 71 && $PSL(2,71), 71$ \\
  \hline
\end{tabular}\vspace{+2mm}
\caption{\label{table:M}Monster Group, $M$}
\end{minipage}
\end{table}
This completes the proof of Theorem A*.

\bibliographystyle{amsalpha}
\bibliography{bibliography}

\newcommand{\etalchar}[1]{$^{#1}$}
\def\cprime{$'$}
\providecommand{\bysame}{\leavevmode\hbox to3em{\hrulefill}\thinspace}
\providecommand{\MR}{\relax\ifhmode\unskip\space\fi MR }
\providecommand{\MRhref}[2]{%
  \href{http://www.ams.org/mathscinet-getitem?mr=#1}{#2}
}
\providecommand{\href}[2]{#2}
\begin{thebibliography}{GGKP08b}

\bibitem[ABN{\etalchar{+}}]{wwwatlas}
Rachel Abbott, John Bray, Simon Nickerson, Steve Linton, Simon Norton, Richard
  Parker, Ibrahim Suleiman, Jonathan Tripp, Peter Walsh, and Robert Wilson,
  \emph{A www-atlas of finite group representations.}

\bibitem[AG84]{GurAs}
M.~Aschbacher and R.~Guralnick, \emph{Some applications of the first cohomology
  group}, J. Algebra \textbf{90} (1984), no.~2, 446--460. \MR{MR760022
  (86m:20060)}

\bibitem[Bur04]{Bs}
Timothy~C. Burness, \emph{Fixed point spaces in actions of classical algebraic
  groups}, J. Group Theory \textbf{7} (2004), no.~3, 311--346. \MR{MR2063000
  (2005c:14054)}

\bibitem[CCN{\etalchar{+}}85]{Atlas}
J.~H. Conway, R.~T. Curtis, S.~P. Norton, R.~A. Parker, and R.~A. Wilson,
  \emph{Atlas of finite groups}, Oxford University Press, Eynsham, 1985,
  Maximal subgroups and ordinary characters for simple groups, With
  computational assistance from J. G. Thackray. \MR{MR827219 (88g:20025)}

\bibitem[Enn62]{Ennola}
Veikko Ennola, \emph{On the conjugacy classes of the finite unitary groups},
  Ann. Acad. Sci. Fenn. Ser. A I No. \textbf{313} (1962), 13. \MR{MR0139651 (25
  \#3082)}

\bibitem[FGG]{GGFar}
Paul Flavell, Robert Guralnick, and Simon Guest, \emph{Characterizations of the
  solvable radical}, {P}reprint, available at http://ar{X}iv.org.

\bibitem[GGKP08a]{GGKP2}
Nikolai Gordeev, Fritz Grunewald, Boris Kunyavskii, and Eugene Plotkin, \emph{A
  commutator description of the solvable radical of a finite group}, Groups
  Geom. Dyn. \textbf{2} (2008), no.~1, 85--120. \MR{MR2367209 (2008j:20057)}

\bibitem[GGKP08b]{GGKP}
\bysame, \emph{A description of {B}aer--{S}uzuki type of the solvable radical
  of a finite group}, J. Pure and Applied Algebra, to appear (2008).

\bibitem[GHL{\etalchar{+}}96]{CHEVIE}
Meinolf Geck, Gerhard Hiss, Frank L{\"u}beck, Gunter Malle, and G{\"o}tz
  Pfeiffer, \emph{C{HEVIE}---a system for computing and processing generic
  character tables}, Appl. Algebra Engrg. Comm. Comput. \textbf{7} (1996),
  no.~3, 175--210, Computational methods in Lie theory (Essen, 1994).
  \MR{MR1486215 (99m:20017)}

\bibitem[GK00]{GK}
Robert~M. Guralnick and William~M. Kantor, \emph{Probabilistic generation of
  finite simple groups}, J. Algebra \textbf{234} (2000), no.~2, 743--792,
  Special issue in honor of Helmut Wielandt. \MR{MR1800754 (2002f:20038)}

\bibitem[GL83]{GL}
Daniel Gorenstein and Richard Lyons, \emph{The local structure of finite groups
  of characteristic {$2$} type}, Mem. Amer. Math. Soc. \textbf{42} (1983),
  no.~276, vii+731. \MR{MR690900 (84g:20025)}

\bibitem[GLS98]{GLS}
Daniel Gorenstein, Richard Lyons, and Ronald Solomon, \emph{The classification
  of the finite simple groups. {N}umber 3.}, Mathematical Surveys and
  Monographs, vol.~40, American Mathematical Society, Providence, RI, 1998.
  \MR{MR1490581 (98j:20011)}

\bibitem[GPPS99]{GPPS}
Robert Guralnick, Tim Penttila, Cheryl~E. Praeger, and Jan Saxl, \emph{Linear
  groups with orders having certain large prime divisors}, Proc. London Math.
  Soc. (3) \textbf{78} (1999), no.~1, 167--214. \MR{MR1658168 (99m:20113)}

\bibitem[GPS07]{GPS}
Robert Guralnick, Eugene Plotkin, and Aner Shalev, \emph{Burnside-type problems
  related to solvability}, Internat. J. Algebra Comput. \textbf{17} (2007),
  no.~5-6, 1033--1048. \MR{MR2355682 (2008j:20110)}

\bibitem[GS03]{GS}
Robert~M. Guralnick and Jan Saxl, \emph{Generation of finite almost simple
  groups by conjugates}, J. Algebra \textbf{268} (2003), no.~2, 519--571.
  \MR{MR2009321 (2005f:20057)}

\bibitem[Gur98]{Gu}
Robert~M. Guralnick, \emph{Some applications of subgroup structure to
  probabilistic generation and covers of curves}, Algebraic groups and their
  representations (Cambridge, 1997), NATO Adv. Sci. Inst. Ser. C Math. Phys.
  Sci., vol. 517, Kluwer Acad. Publ., Dordrecht, 1998, pp.~301--320.
  \MR{MR1670777 (2000d:20062)}

\bibitem[KL90]{KL}
Peter Kleidman and Martin Liebeck, \emph{The subgroup structure of the finite
  classical groups}, London Mathematical Society Lecture Note Series, vol. 129,
  Cambridge University Press, Cambridge, 1990. \MR{MR1057341 (91g:20001)}

\bibitem[Kle88a]{G2}
Peter~B. Kleidman, \emph{The maximal subgroups of the {C}hevalley groups {$G\sb
  2(q)$} with {$q$} odd, the {R}ee groups {$\sp 2G\sb 2(q)$}, and their
  automorphism groups}, J. Algebra \textbf{117} (1988), no.~1, 30--71.
  \MR{MR955589 (89j:20055)}

\bibitem[Kle88b]{3D4}
\bysame, \emph{The maximal subgroups of the {S}teinberg triality groups {$\sp
  3D\sb 4(q)$} and of their automorphism groups}, J. Algebra \textbf{115}
  (1988), no.~1, 182--199. \MR{MR937609 (89f:20024)}

\bibitem[Law95]{Lawther}
R.~Lawther, \emph{Jordan block sizes of unipotent elements in exceptional
  algebraic groups}, Comm. Algebra \textbf{23} (1995), no.~11, 4125--4156.
  \MR{MR1351124 (96h:20084)}

\bibitem[LS03]{LSe1}
Martin~W. Liebeck and Gary~M. Seitz, \emph{A survey of maximal subgroups of
  exceptional groups of {L}ie type}, Groups, combinatorics \& geometry (Durham,
  2001), World Sci. Publ., River Edge, NJ, 2003, pp.~139--146. \MR{MR1994964
  (2004f:20089)}

\bibitem[LSS92]{LSS}
Martin~W. Liebeck, Jan Saxl, and Gary~M. Seitz, \emph{Subgroups of maximal rank
  in finite exceptional groups of {L}ie type}, Proc. London Math. Soc. (3)
  \textbf{65} (1992), no.~2, 297--325. \MR{MR1168190 (93e:20026)}

\bibitem[LSS96]{LSS2}
\bysame, \emph{Factorizations of simple algebraic groups}, Trans. Amer. Math.
  Soc. \textbf{348} (1996), no.~2, 799--822. \MR{MR1316858 (96g:20064)}

\bibitem[Mal91]{2F4}
Gunter Malle, \emph{The maximal subgroups of {${}\sp 2F\sb 4(q\sp 2)$}}, J.
  Algebra \textbf{139} (1991), no.~1, 52--69. \MR{MR1106340 (92d:20068)}

\bibitem[Miz77]{Mizuno1}
Kenzo Mizuno, \emph{The conjugate classes of {C}hevalley groups of type
  {$E\sb{6}$}}, J. Fac. Sci. Univ. Tokyo Sect. IA Math. \textbf{24} (1977),
  no.~3, 525--563. \MR{MR0486170 (58 \#5951)}

\bibitem[Miz80]{Mizuno2}
\bysame, \emph{The conjugate classes of unipotent elements of the {C}hevalley
  groups {$E\sb{7}$} and {$E\sb{8}$}}, Tokyo J. Math. \textbf{3} (1980), no.~2,
  391--461. \MR{MR605099 (82m:20046)}

\bibitem[NW02]{anatomy}
Simon~P. Norton and Robert~A. Wilson, \emph{Anatomy of the {M}onster. {II}},
  Proc. London Math. Soc. (3) \textbf{84} (2002), no.~3, 581--598.
  \MR{MR1888424 (2003b:20023)}

\bibitem[Sei83]{Seitz}
Gary~M. Seitz, \emph{The root subgroups for maximal tori in finite groups of
  {L}ie type}, Pacific J. Math. \textbf{106} (1983), no.~1, 153--244.
  \MR{MR694680 (84g:20085)}

\bibitem[Sho74]{Shoji}
Toshiaki Shoji, \emph{The conjugacy classes of {C}hevalley groups of type
  {$(F\sb{4})$} over finite fields of characteristic {$p\not=2$}}, J. Fac. Sci.
  Univ. Tokyo Sect. IA Math. \textbf{21} (1974), 1--17. \MR{MR0357641 (50
  \#10109)}

\bibitem[SS97]{SS}
Jan Saxl and Gary~M. Seitz, \emph{Subgroups of algebraic groups containing
  regular unipotent elements}, J. London Math. Soc. (2) \textbf{55} (1997),
  no.~2, 370--386. \MR{MR1438641 (98m:20057)}

\bibitem[Suz62]{2B2}
Michio Suzuki, \emph{On a class of doubly transitive groups}, Ann. of Math. (2)
  \textbf{75} (1962), 105--145. \MR{MR0136646 (25 \#112)}

\bibitem[Tho68]{Thompson}
John~G. Thompson, \emph{Nonsolvable finite groups all of whose local subgroups
  are solvable}, Bull. Amer. Math. Soc. \textbf{74} (1968), 383--437.
  \MR{MR0230809 (37 \#6367)}

\bibitem[Wal63]{Wall}
G.~E. Wall, \emph{On the conjugacy classes of classical groups}, J. Austral.
  Math. Soc. \textbf{3} (1963), 1--62. \MR{MR0150210 (27 \#212)}

\bibitem[War66]{Ward}
Harold~N. Ward, \emph{On {R}ee's series of simple groups}, Trans. Amer. Math.
  Soc. \textbf{121} (1966), 62--89. \MR{MR0197587 (33 \#5752)}

\bibitem[Wie64]{Wiel}
Helmut Wielandt, \emph{Finite permutation groups}, Translated from the German
  by R. Bercov, Academic Press, New York, 1964. \MR{MR0183775 (32 \#1252)}

\end{thebibliography}

\end{document}